\documentclass[a4paper]{article}

\usepackage{geometry}
\usepackage{amsmath,amssymb,amsfonts,mathtools}
\usepackage{amsthm}

\usepackage{indentfirst}
\usepackage{array}
\usepackage{color}
\usepackage{cite}
\usepackage{microtype}
\usepackage{enumitem}
\usepackage{needspace}
\usepackage{tikz}

\usepackage[colorlinks=true]{hyperref}

\allowdisplaybreaks[1]
\numberwithin{equation}{section}

\theoremstyle{plain}
\newtheorem{theorem}{Theorem}[section]
\newtheorem{lemma}[theorem]{Lemma}
\newtheorem{proposition}[theorem]{Proposition}
\newtheorem{corollary}[theorem]{Corollary}
\newtheorem{claim}[theorem]{Claim}
\theoremstyle{remark}
\newtheorem{remark}[theorem]{Remark}

\DeclareMathOperator{\Tr}{Tr}
\newcommand{\C}{\mathbb C}
\newcommand{\R}{\mathbb R}
\newcommand{\N}{\mathbb N}
\newcommand{\calL}{\mathcal L}
\newcommand{\Sch}{\mathcal S}
\newcommand{\dd}{\,d}
\newcommand{\br}[1]{\langle #1\rangle}
\DeclarePairedDelimiter{\norm}{\lVert}{\rVert}
\newcommand{\ind}{\mathbf 1}
\newcommand{\ad}{\operatorname{ad}}
\newcommand{\Spec}{\operatorname{Spec}}
\newcommand{\Ran}{\operatorname{Ran}}
\newcommand{\supp}{\operatorname{supp}}
\newcommand{\sgn}{\operatorname{sgn}}
\newcommand{\ee}{\epsilon}
\newcommand{\Ht}{\widetilde H}
\newcommand{\Pt}{\widetilde P}
\newcommand{\Vt}{\widetilde V}
\newcommand{\Xt}{X^{\mathrm R}}
\newcommand{\Yt}{Y^{\mathrm R}}
\newcommand{\FA}{\mathcal F}
\newcommand{\Hs}[1]{H^{#1}}
\newcommand{\sloc}{\mathrm{sloc}}
\newcommand{\ip}[2]{\langle #1,#2\rangle}
\begin{document}

\title{A sharp spectral multiplier theorem on the Engel group}

\author{Zhicheng Han and Zhipeng Yang\thanks{Corresponding author: yangzhipeng326@163.com}}

\date{}
\maketitle

\newcommand{\authoraddresses}{
  \par
  \medskip
  \noindent
  \begin{minipage}{\textwidth}
    \linespread{1}\small
    \textbf{Zhicheng Han:}\\[0.2em]
    \textsc{Institut f\"ur Analysis, Leibniz Universit\"at Hannover, 30167 Hannover, Germany}\\[0.3em]
    \textit{E-mail address}: \texttt{hanzc@math.uni-hannover.de}\\[0.8em]
    
    \noindent
    \textbf{Zhipeng Yang:}\\[0.2em]
    \textsc{Department of Mathematics, Yunnan Key Laboratory of Modern Analytical Mathematics and Applications, Yunnan Normal University, 650500 Kunming, China}\\
    \textsc{Department of Mathematics: Analysis, Logic and Discrete Mathematics, Ghent University, 9000 Ghent, Belgium}\\[0.3em]
    \textit{E-mail address}: \texttt{yangzhipeng326@163.com}
  \end{minipage}
}

\begin{abstract}
We prove a spectral multiplier theorem for the standard sub-Laplacian on the Engel group under a scale-invariant local $H^s$ condition with $s>2$. The multiplier operators $F(\mathcal L)$ are of weak type $(1,1)$ and bounded on $L^p$ for every $1<p<\infty$. Together with the known lower bound, this identifies the sharp threshold as $2$, half the topological dimension. The proof combines a weighted Plancherel estimate for quartic oscillators with a decomposition into rank-two spectral subspaces, which controls parameter derivatives uniformly as one well escapes to infinity. For fixed smooth multipliers compactly supported in $(0,\infty)$, we also determine the leading term and first correction for the localized kernels. Whether the weak type $(1,1)$ estimate holds under the local $H^2$ condition remains open.

\par
\smallskip
\noindent \textbf{Keywords}: Engel group; spectral multipliers; sub-Laplacian.

\smallskip
\noindent \textbf{MSC2020}: 42B15, 43A80, 22E30.
\end{abstract}

\section{Introduction and main results}\label{Sec1}
For a nonnegative self-adjoint operator $\calL$, the spectral theorem gives $\norm{F(\calL)}_{2\to2}\le\norm F_\infty$. For $p\ne2$, bounds for $F(\calL)$ on $L^p$ require regularity of $F$ and apply, for instance, to the imaginary powers of $\calL$. For the Laplacian on $\R^d$,
\[
\widehat{F(-\Delta)f}(\xi)=F(|\xi|^2)\widehat f(\xi),
\]
so the problem is part of the classical theory of Fourier multipliers. Fix a nonzero $\chi\in C_c^\infty((0,\infty))$ and, for $s\ge0$, write
\begin{equation}\label{eq1.1}
\norm F_{H^s,\sloc}=\sup_{r>0}\norm{\chi(\cdot)F(r\cdot)}_{H^s(\R)}.
\end{equation}
Here $H^s=H^s(\R)$ is the $L^2$ Sobolev space with norm $\norm f_{H^s}=\norm{(1+|\xi|^2)^{s/2}\widehat f(\xi)}_2$, for a fixed Fourier normalization. The subscript $\sloc$ stands for ``scale-invariant local''. Different nonzero cutoffs give equivalent norms. In its spectral form for the Euclidean Laplacian, the classical Mihlin--H\"ormander theorem~\cite{Hor} states
\[
s>\frac d2
\quad\Longrightarrow\quad
\norm{F(-\Delta)}_{L^1\to L^{1,\infty}}
+\norm{F(-\Delta)}_{L^p\to L^p}
\lesssim_{s,p}\norm F_{H^s,\sloc},
\qquad 1<p<\infty.
\]

On a stratified group with Lie algebra $\mathfrak g=V_1\oplus\cdots\oplus V_r$, the two relevant dimensions are
\[
d=\sum_{j=1}^r\dim V_j,\qquad Q=\sum_{j=1}^r j\dim V_j.
\]
The sub-Laplacian $\calL=-\sum_{j=1}^{\dim V_1}X_j^2$ is homogeneous of degree two, whereas Haar measure scales with degree $Q$. Control balls of radius $R$ have volume proportional to $R^Q$. The general multiplier theorems of Christ and Mauceri-Meda~\cite{Chr,MauMed} give the same weak type and $L^p$ bounds as above under the stronger condition $s>Q/2$. Since $Q>d$ on every nonabelian stratified group, these theorems leave a gap between Euclidean regularity and the regularity given by homogeneous volume growth.

Write $\varsigma(\calL)$ for the infimum of all $s\ge0$ such that, for every $1<p<\infty$, there is a constant $C_{p,s,\chi}$ satisfying
\[
\norm{F(\calL)}_{L^p\to L^p}\le C_{p,s,\chi}\norm F_{H^s,\sloc}
\]
for every bounded Borel function $F:[0,\infty)\to\C$. Martini, M\"uller and Nicolussi Golo~\cite{MMN} showed, for sub-Laplacians on stratified groups of arbitrary step, that a multiplier estimate on $L^p$ with a local $L^\infty$ Sobolev norm of order $a$ requires
\[
a\ge d\left|\frac1p-\frac12\right|,
\]
and that a threshold defined for bounded continuous multipliers with a norm closely related to \eqref{eq1.1} is at least $d/2$. The comparison in Section~\ref{Sec6} transfers this lower bound to $\varsigma(\calL)$. Thus the general bounds are
\[
\frac d2\le\varsigma(\calL)\le\frac Q2.
\]
Determining where the threshold lies in this interval requires information about the spectral resolution beyond homogeneous volume growth.

Sharp results in this direction were obtained on Heisenberg and generalized Heisenberg groups by Hebisch and by M\"uller-Stein~\cite{Heb,MS}. For the standard Heisenberg group $\mathbb H^n$, they establish
\[
\varsigma(\calL)=\frac{2n+1}{2},\qquad d=2n+1,\quad Q=2n+2.
\]
The topological threshold has since been established for further classes of groups. Martini and M\"uller obtained the sufficient condition $s>3$ on the free two-step group $N_{3,2}$, for which $d=6$ and $Q=9$~\cite{MMfree}. Together with the lower bound, this yields $\varsigma(\calL)=3$. Their results for further two-step groups cover all groups with $\dim V_2\le2$ and all groups with $d\le7$~\cite{MMnew}. Martini obtained the $d/2$ threshold for suitable sub-Laplacians on Heisenberg-Reiter groups and related classes~\cite{MarHR}. For an arbitrary nonabelian two-step stratified group, Martini and M\"uller proved the strict improvement
\[
\frac d2\le\varsigma(\calL)<\frac Q2
\]
without additional algebraic assumptions~\cite{MMtop}. Whether $\varsigma(\calL)=d/2$ on every stratified group remains open, even in step two~\cite{MarHR,MMtop}. More recent work of Niedorf~\cite{Nie} obtains the sharp regularity condition $s>d|1/p-1/2|$ on M\'etivier groups for ranges of $p$ determined by the layer dimensions. Sharp wave estimates on groups of Heisenberg type provide a related instance in which topological dimension determines the regularity loss~\cite{MSwave}.

We consider the connected, simply connected Engel group $G$, whose Lie algebra has basis $X,Y,Z,W$ and nonzero brackets
\[
[X,Y]=Z,\qquad [X,Z]=W.
\]
Its layers are $\operatorname{span}\{X,Y\}$, $\R Z$ and $\R W$, so $d=4$ and $Q=7$. We use the nonnegative self-adjoint realization of $\calL=-X^2-Y^2$ associated with the horizontal Dirichlet form. The general bounds leave the interval
\[
2\le\varsigma(\calL)\le\frac72.
\]
On Heisenberg groups the Fourier transform reduces the sub-Laplacian to harmonic oscillators. On the Engel group it leads to quartic oscillators, whose potentials may have two wells. Controlling these two wells is the main new difficulty.

Bahouri, Barilari, Gallagher and L\'eautaud~\cite{BBGL} developed Fourier inversion and Plancherel formulas for the Engel group and established the spectral summability needed to use them. If $E_m(\mu)$ are the eigenvalues of
\[
-\frac{d^2}{du^2}+\left(\frac{u^2}{2}-\mu\right)^2,
\]
their Theorem~1.3 states
\[
\sum_{m=0}^\infty\int_{\R}E_m(\mu)^{-q}\dd\mu<\infty\quad\Longleftrightarrow\quad q>2.
\]
For smooth $F$ compactly supported in $(0,\infty)$, their Proposition~1.10 states the kernel identity
\[
\norm{K_F}_2^2=c_{\calL}\int_0^\infty|F(E)|^2E^{5/2}\dd E,
\]
where $K_F$ is the convolution kernel of $F(\calL)$. Related work treats $L^p$-$L^q$ estimates for other Engel and Cartan operators~\cite{Cha} and the Engel Schr\"odinger flow~\cite{Ben}. To pass from the spectral estimates above to a sharp multiplier theorem, we need spatially weighted kernel estimates and control of derivatives with respect to the Fourier parameters.

Weighted Plancherel estimates are central both to abstract spectral multiplier theorems~\cite{DOS} and to sharp results for Grushin operators~\cite{MSGr,MMGr}. In the plane, the reduction
\[
-\partial_x^2-V(x)\partial_y^2\quad\longmapsto\quad-\partial_x^2+\xi^2V(x)
\]
connects the multiplier problem with one-dimensional spectral analysis. Dall'Ara and Martini developed this approach for increasingly general potentials~\cite{DMrobust,DM,DMII}. For a class of doubling potentials with a single well, they first obtained the optimal order $s>1$ with a local $L^\infty$ Sobolev condition~\cite{DM}, and then with the local $H^s$ condition~\cite{DMII}. Their estimates for eigenvalue gaps provide one ingredient of our proof. On the Engel group, we also need uniform control as two wells separate and one of them escapes to infinity.

\begin{theorem}\label{Thm1.1}
For every $s>2$, every bounded Borel function $F:[0,\infty)\to\C$ satisfies
\[
\norm{F(\calL)}_{L^1(G)\to L^{1,\infty}(G)}\le C_{s,\chi}\norm F_{H^s,\sloc}.
\]
For every $1<p<\infty$, one also has
\[
\norm{F(\calL)}_{L^p(G)\to L^p(G)}\le C_{p,s,\chi}\norm F_{H^s,\sloc}.
\]
Moreover,
\begin{equation}\label{eq1.2}
\varsigma(\calL)=2.
\end{equation}
\end{theorem}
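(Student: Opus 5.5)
The plan is to reduce Theorem~\ref{Thm1.1} to a weighted Plancherel estimate with an extra weight in a central variable. The reduction itself is the standard Calder\'on--Zygmund machinery of Hebisch and M\"uller--Stein, in the form used by Martini--M\"uller~\cite{MMfree,MMtop}.

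Fix a dyadic decomposition $F=\sum_j F_j$ with $F_j=F\chi(2^{-j}\cdot)$. By homogeneity of $\calL$ under dilations it suffices to treat multipliers supported in $[1/2,2]$. For these the goal is
\[
\int_G(1+|x|)^{\alpha}|K_F(x)|\dd x\lesssim\norm F_{H^s}
\qquad\text{for some }\alpha>0,\ s>2 .
\]
Combined with the standard gradient bound on $K_F$ and the weak type $(1,1)$ criterion for singular integrals on spaces of homogeneous type, this gives the weak type estimate. The $L^p$ bounds then follow by interpolation with $L^2$ and duality. The lower bound $\varsigma(\calL)\ge2$ is the result of \cite{MMN}, transferred as in Section~\ref{Sec6}. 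Together with the upper bound this gives \eqref{eq1.2}.

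To get the $L^1$ estimate with $s>2=d/2$ rather than $Q/2$, I will split the kernel by the size of the $W$-coordinate and apply Cauchy--Schwarz. The key input is a weighted Plancherel estimate
\begin{equation*}
\int_G |w|^{2\gamma}\,|K_F(x,y,z,w)|^2\dd x\dd y\dd z\dd w\lesssim\norm F_{H^\gamma}^2,\qquad 0\le\gamma<\gamma_0 ,
\end{equation*}
for $F$ supported in $[1/2,2]$, with some $\gamma_0>0$. Here $w$ is the top-layer coordinate, so $|w|^{\gamma}$ has homogeneous degree $3\gamma$. The $z$-direction is handled similarly with a weight $|z|^\gamma$. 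Interpolating these bounds with the kernel identity of~\cite{BBGL} and the finite-speed/support bound $\supp K_{F}\subset B(0,R)$ for $\widehat F$ supported in $[-R,R]$ gains the factor $R^{(Q-d)/2}$ lost over Euclidean volume. The result is $\norm{(1+|x|)^\alpha K_F}_1\lesssim\norm F_{H^s}$ for $s>d/2$.

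The weighted estimate is proved on the Fourier side. In the Plancherel formula of~\cite{BBGL}, the representations are parametrized by $(\lambda,\mu)$, with $\lambda$ dual to $w$. There $\calL$ becomes $\lambda^{2/3}$ times the quartic oscillator $H_\mu=-\partial_u^2+(u^2/2-\mu)^2$ after rescaling. Multiplying by $w$ corresponds to $\partial_\lambda$, and by homogeneity this turns into a combination of $\partial_\mu$ and the dilation generator acting on $E_m(\mu)$ and on the projections $P_m(\mu)$. One therefore needs bounds on $\partial_\mu E_m$, $\partial_\mu^2E_m$ and on $\partial_\mu P_m$ in Hilbert--Schmidt norm. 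These must be uniform in $m$ and integrable against the measure $\sum_m\int\cdots\dd\mu$, whose convergence for $E_m^{-q}$, $q>2$, is Theorem~1.3 of~\cite{BBGL}. Fractional $\gamma$ follows by interpolation.

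The main obstacle is the regime $\mu\to+\infty$. There $H_\mu$ has two wells at $u=\pm\sqrt{2\mu}$ which separate, and eigenvalues come in nearly degenerate pairs with exponentially small tunnelling gaps. As a result, $\partial_\mu P_m$ for individual $m$ blows up like the inverse gap. My plan is not to differentiate individual eigenprojections. Instead I will group the spectrum into rank-two spectral subspaces, one for each pair, using a Riesz projection $\Pi_k(\mu)$ over a contour enclosing each pair. The gap between different pairs is of order $\mu^{1/2}$ by a harmonic approximation at each well, and the Dall'Ara--Martini gap estimates~\cite{DM,DMII} make this uniform in $k$. On each rank-two block, $F(H_\mu)\Pi_k$ is a $2\times2$ matrix function. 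Its $\mu$-derivatives are controlled by divided differences of $F$, which are bounded by $\norm F_{H^\gamma}$-type quantities independently of the tiny intra-pair gap. The resolvent bounds along the contour then give the $\partial_\mu\Pi_k$ estimates. I expect the bulk of the technical work to go into checking that these bounds decay fast enough in $\mu$ to remain integrable at infinity, uniformly across the region where one well escapes.
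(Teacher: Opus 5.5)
Your outer architecture matches the paper's: a weighted $L^2$ kernel bound, Cauchy--Schwarz to an $L^1$ bound with a power of $\br\rho$, horizontal gradient bounds, Calder\'on--Zygmund theory, and the lower bound of \cite{MMN}. So does your idea of treating near-degenerate pairs as rank-two blocks controlled by divided differences. The gap is in the weighted Plancherel estimate.

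\textbf{The weights you need.} Cauchy--Schwarz needs $\br\rho^{-\alpha}\br z^{-\beta}\br w^{-\gamma}\in L^2(G)$, that is, $\alpha+2\beta+3\gamma>7/2$ with $\beta,\gamma<1/2$. The cost is $\alpha+\beta+\gamma$ derivatives. The $w$-weight alone therefore gives only $s>5/2$. To reach $2$ you need the product weight $\br z^\beta\br w^\gamma$ with both exponents arbitrarily close to $1/2$ (the paper proves it for $\beta+2\gamma<3/2$). So ``some $\gamma_0>0$'' is not enough, and the $z$-moment is indispensable.

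\textbf{Why your Fourier framework cannot supply them.} In the group Fourier transform with Hilbert--Schmidt norms, $z$ is not central, so multiplication by $z$ is not a derivative in the representation parameters. Even $w$ is not exactly $\partial_\lambda$. The representations depend on $\lambda$ beyond the central character, and the correction terms involve position operators on the representation space whose coefficients blow up on the wells as $\lambda\to0$.

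\textbf{The missing idea.} Use the Euclidean partial Fourier transform in $(y,z,w)$, the coordinates of the abelian ideal spanned by $Y,Z,W$. It turns $K_F$ into the kernel column $F(H_{\eta,\zeta,\lambda})\delta_0$, and the moments in $z,w$ exactly into $\partial_\zeta,\partial_\lambda$. Dyadic localization in $(\zeta,\lambda)$ then gives the powers $h^{3/2-k-2l}$. After interpolation, their dyadic summability is exactly $\beta+2\gamma<3/2$.

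\textbf{The escaping well.} Your treatment misses the real obstruction. The tunnelling splitting is harmless once you work with rank-two blocks. What destroys uniformity is that the parameter derivatives of the operator are large on the escaping well: $\partial_bP=u^2p$ carries the factor $u^2\sim|b|^{-2}$ there. Any quantity that weights both wells equally, as a Hilbert--Schmidt norm does, cannot be made integrable by regrouping the spectrum.

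The paper exploits the source point of the column. If it lies near the bounded well ($|b|\br\eta\le2\delta_*$), Agmon estimates show that the escaping well contributes only exponentially small errors. The column is then compared with that of the uniformly confining single-well operator $\Ht_{\ee,t}$. The Dall'Ara--Martini gaps are applied to this single well. Their class $\mathcal P_1(K)$ excludes double wells, so they give no uniform inter-pair gap for $-\partial_u^2+(u^2/2-\mu)^2$.

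If the source lies near the far well, or the energy is near the barrier top ($|b|\br\eta\ge c\delta_*$), the paper uses two tools. These are the cluster count with logarithmic loss (Lemma~\ref{Lem3.1}) and the divided-difference calculus of Lemma~\ref{Lem2.4}. Their polynomial losses in $|b|^{-1}\lesssim\br\eta$ are absorbed by the rapid decay of $\psi(H)\delta_0$ in $\eta$ (Lemma~\ref{Lem2.1}).

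Your plan has no mechanism for the barrier top. Since $\lambda$ is integrated freely, the relevant eigenvalues $E_m(\mu)$ include those near $\mu^2$ even as $\mu\to\infty$. There the pairs merge, and neither the harmonic approximation nor a uniform gap of order $\mu^{1/2}$ is available.
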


Theorem~\ref{Thm1.1} replaces the general sufficient condition $s>7/2$ by $s>2$, with weak type $(1,1)$ and boundedness throughout the range $1<p<\infty$. To our knowledge, the Engel group is the first stratified group of step three for which this threshold is known. The local norm depends only on the restriction of $F$ to $(0,\infty)$ modulo null sets. Lemma~\ref{Lem5.4} shows that spectral projections of Lebesgue null subsets of $(0,\infty)$ vanish, as does the spectral projection at zero, so the theorem applies to arbitrary bounded Borel representatives.

The proof rests on a weighted kernel estimate. We use the coordinates $(x,y,z,w)=\exp(yY+zZ+wW)\exp(xX)$, with Haar measure $dx\,dy\,dz\,dw$, and put
\[
\br v=1+|v|,
\qquad
\rho(x,y,z,w)=|x|+|y|+|z|^{1/2}+|w|^{1/3}.
\]
The gauge $\rho$ is comparable with the control distance from the identity. The coordinates $z$ and $w$ have homogeneous degrees two and three, respectively.

\begin{theorem}\label{Thm1.2}
Let $J\Subset(0,\infty)$ be a compact interval. For $\alpha,\beta,\gamma\ge0$ with $\beta+2\gamma<3/2$ and $\kappa>0$,
\begin{equation}\label{eq1.3}
\norm{\br\rho^\alpha\br z^\beta\br w^\gamma K_F}_2
\le C_{J,\alpha,\beta,\gamma,\kappa}
\norm F_{H^{\alpha+\beta+\gamma+\kappa}}
\end{equation}
for $F\in C_c^\infty(J)$. The estimate extends to $F\in H^{\alpha+\beta+\gamma+\kappa}(\R)$ with $\supp F\subset J$, with the kernel defined by the corresponding weighted $L^2$ limit.
\end{theorem}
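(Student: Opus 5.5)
The plan is to pass to the group Fourier transform, following Bahouri--Barilari--Gallagher--L\'eautaud. In the coordinates $(x,y,z,w)=\exp(yY+zZ+wW)\exp(xX)$, take the partial Fourier transform in the central-type variables $(y,z,w)\mapsto(\eta,\zeta,\omega)$. For $\omega\ne0$ this turns $X,Y$ into $\partial_x$ and $i(\eta+\zeta x+\omega x^2/2)$. Completing the square and rescaling by $|\omega|^{1/3}$ conjugates $\calL$ to $|\omega|^{2/3}H_\mu$, where
\[
H_\mu=-\frac{d^2}{du^2}+\Bigl(\frac{u^2}{2}-\mu\Bigr)^2 .
\]
Here $\mu=\mu(\eta,\zeta,\omega)=(\zeta^2/(2\omega)-\eta)\,\omega^{-1/3}$ up to sign conventions, and the variable $u=|\omega|^{1/3}(x+\zeta/\omega)$ involves a translation. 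The kernel then has the form
\[
\widehat{K_F}(x,\cdot)=\sum_m F\bigl(|\omega|^{2/3}E_m(\mu)\bigr)\,\psi_m(\mu;u)\,\psi_m(\mu;u_0),
\]
with eigenfunctions $\psi_m(\mu;\cdot)$. Plancherel in $(y,z,w)$ and in $x$ recovers the identity $\norm{K_F}_2^2=c\int|F(E)|^2E^{5/2}\dd E$ of their Proposition~1.10. The weights translate into operations on the Fourier side. A factor $|x|$ becomes multiplication in $u$ up to the shift $\zeta/\omega$. The factors $y,z,w$ become derivatives $\partial_\eta,\partial_\zeta,\partial_\omega$, which via the chain rule hit $|\omega|^{2/3}$, the parameter $\mu$, and the shift. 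By homogeneity and the support $\supp F\subset J$, we have $\rho\lesssim|x|+|y|+|z|^{1/2}+|w|^{1/3}$. So it suffices to prove, for integer orders first, bounds of the form
\[
\norm{x^{a}y^{b}z^{c}w^{e}K_F}_2\lesssim\norm F_{H^{a+b+2c+3e}}
\]
with the improvement on the $z,w$ weights. Fractional orders then follow by complex interpolation, which is where the loss $\kappa$ enters, and the extension to $H^s$ data is by density.

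The key steps, in order, are as follows.

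\begin{enumerate}[label=(\roman*)]
\item \emph{Spectral input.} Establish eigenvalue asymptotics and derivative bounds for $E_m(\mu)$ and $\psi_m(\mu)$ in $\mu$. For $\mu\to+\infty$ the potential has two wells at $u=\pm\sqrt{2\mu}$. The eigenvalues come in nearly degenerate pairs with exponentially small splitting, so individual $\partial_\mu\psi_m$ blow up.
\item \emph{Rank-two decomposition.} Following the abstract, group the eigenvalues into rank-two clusters and work with the cluster projections $P_k(\mu)$ rather than the individual projections. The gap between clusters is bounded below, using the Dall'Ara--Martini gap estimates in each single well. This makes $\partial_\mu^jP_k(\mu)$ controlled uniformly by resolvent contour integrals. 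Moreover $F(|\omega|^{2/3}E)$, summed within a cluster, can be written as $F$ evaluated at a smooth function plus a difference quotient that is controlled by $F'$.
\item \emph{Weighted Plancherel.} Compute the weighted $L^2$ norm as $\int\sum_k\norm{\text{derivatives of }F(|\omega|^{2/3}H_\mu)P_k}_{HS}^2$. Here $\partial_\mu$ costs a factor of the well location, comparable to $\mu^{1/2}$, per derivative. A $z$-derivative gives $\partial_\zeta\mu\sim\zeta\omega^{-4/3}$ together with shift terms. The restriction $\beta+2\gamma<3/2$ should come from integrability: after the change of variables $(\eta,\zeta)\mapsto(\mu,\zeta)$, the counting function $\#\{m:E_m(\mu)\in J\}$ together with the powers of $\mu$ gained from the derivatives must stay integrable as $\mu\to\pm\infty$. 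This is the analogue of the BBGL summability $\sum_m\int E_m^{-q}<\infty$ iff $q>2$. I would check that weights $\br z^\beta\br w^\gamma$ produce $\mu^{\beta+2\gamma}$-type losses against available decay of order $\mu^{-3/2}$.
\end{enumerate}

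The main obstacle is step (ii). One needs uniform control of $\mu$-derivatives as the two wells separate and one escapes to infinity; after the $\zeta$-shift only one well stays near the origin in $x$. The individual eigenprojections are useless because of tunnelling. I would first try to treat $F(|\omega|^{2/3}H_\mu)P_k(\mu)$ through the Helffer--Sj\"ostrand formula with an almost analytic extension of $F$. This converts $\mu$-derivatives into resolvent identities $\partial_\mu(H_\mu-z)^{-1}=-(H_\mu-z)^{-1}(\partial_\mu V)(H_\mu-z)^{-1}$, with $\partial_\mu V=-(u^2-2\mu)$ localized near the wells. The needed Hilbert--Schmidt bounds, with Agmon-type decay away from the wells, would then follow from single-well estimates applied in each well separately.
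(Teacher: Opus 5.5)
There is a genuine gap in how you reach fractional $z,w$ weights. You propose to prove improved integer-order bounds for the full kernel $K_F$ and then interpolate. However, the only integer points with $\beta+2\gamma<3/2$ are $(0,0)$ and $(1,0)$, so interpolation inside the admissible range never produces any $w$-weight, let alone $\beta=\gamma\uparrow1/2$ as required for Theorem~\ref{Thm1.1}. You would need corner estimates such as $\norm{\br w K_F}_2\lesssim\norm F_{H^{1+\kappa}}$ or $\norm{\br w^2K_F}_2\lesssim\norm F_{H^{2+\kappa}}$. These lie outside the statement (the paper leaves open whether $\beta+2\gamma<3/2$ can be relaxed), and your own step (iii) predicts that they fail. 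The homogeneous bounds with $H^{a+b+2c+3e}$ are available, but interpolating them gives no gain. Also, complex interpolation of Sobolev spaces is lossless, so it is not where $\kappa$ comes from.

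The missing device is dyadic localization in the higher-layer frequencies. Cut off with $\vartheta(\zeta/h,\lambda/h^2)$ and rescale to $P_{h,\theta}=-h^2\partial_u^2+(\eta+au+bu^2/2)^2$ with $(a,b)$ in a fixed annulus. Then $z^kw^l$ becomes $h^{-k-2l}\partial_a^k\partial_b^l$, and at each fixed scale $\norm{z^kw^lK_{F,h}}_2\lesssim h^{3/2-k-2l}\Lambda(h)^2\norm F_{H^{k+l+\kappa}}$ for all $k+l\le2$. This includes $(2,0)$ and $(0,2)$, where the power of $h$ is negative. One interpolates at fixed $h$ and only then sums over dyadic $h$; the condition $\beta+2\gamma<3/2$ is exactly the summability of $h^{3/2-\beta-2\gamma}$. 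By Proposition~\ref{Prop5.8} these powers are attained, so summing the pieces by the triangle inequality cannot give your integer corners. The factor $\br\rho^\alpha$ is then added by interpolating with $\norm{\br\rho^NK_F}_2\lesssim\norm F_{H^{N+\kappa}}$ for large $N$, which comes from finite propagation speed.

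Second, step (ii) does not give $L^2$-Sobolev control with one derivative of $F$ per moment. Bounding a cluster difference by $F'$ is a pointwise bound, and at second order it would put $F'''$ on the intra-pair splitting. The paper does three things instead:
\begin{itemize}
\item It averages over dilations $f_\tau(E)=F(\tau^2E)$ and bounds $\int_1^2|f_\tau^{[k]}|^2\,d\tau\lesssim\norm F_{H^k}^2$ via Hermite--Genocchi.
\item It returns to a fixed $F$ by homogeneity of the parameter space.
\item It controls intra-pair differences by H\"older continuity in $L^2_\tau$, which is the source of $\kappa$.
\end{itemize}
Sums over distinct clusters, which come from $\partial P_k$, need a Schur bound. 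This uses the off-diagonal decay $|\ip{\partial H\varphi_m}{\varphi_n}|\lesssim(h/|E_n-E_m|)^N$ (iterated commutators gain $h^N$) together with eigenvalue counting in $h$-windows.

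These bounds must hold uniformly on the large set of parameters where energies in $J$ are comparable to the barrier height. There is no pair structure there, and counting holds only with the loss $\Lambda(h)$. Your plan relies on individual eigenfunction derivatives and well-separated pairs, so it does not cover this region. Since divided differences tolerate coincident nodes, tunnelling by itself is harmless once individual eigenfunction derivatives are avoided.

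The escaping-well region $|b|\br\eta\ll1$ needs separate treatment for a different reason: counting and commutator constants grow polynomially in $|b|^{-1}$. The decay of $\psi_\tau(P)\delta_0$ in $\eta$ absorbs these losses only where $|b|^{-1}\lesssim\br\eta$. In the escaping region the paper compares with a uniformly confining single-well operator. It uses the rank-two pairs to show that, for source points near the bounded well, the two kernel columns differ by exponentially small errors, derivatives included, without dividing by the intra-pair splitting.
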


By Corollary~\ref{Cor6.1}, we also have
\[
\norm{\br\rho^\delta K_F}_1
\le C_{s,\delta,J}\norm F_{H^s},
\qquad s>2,\quad 0\le\delta<s-2.
\]
For fixed smooth $F$, let $K_{F,h}$ be the localized kernels defined in Section~\ref{Sec2.1}. Proposition~\ref{Prop5.8} identifies the limit of $h^{-3}K_{F,h}(x,y,z/h,w/h^2)$ as the product of the Euclidean kernel of $F(-\Delta)$ on the abelian quotient $\R^2$ and the inverse Fourier transform of the localization cutoff. The first correction and exact leading constants for all integer moments identify the powers of $h$ in Proposition~\ref{Prop5.2} and remove its logarithmic factor for fixed smooth multipliers.

The equality \eqref{eq1.2} and the lower bound from \cite{MMN} concern the infimum of the admissible Sobolev orders. The estimate at the endpoint $s=2$ remains open.

\medskip
\noindent\textbf{Open problem.} Is there a constant $C_\chi$ such that
\[
\norm{F(\calL)}_{L^1(G)\to L^{1,\infty}(G)}
\le C_\chi\norm F_{H^2,\sloc}
\]
for every bounded Borel function $F:[0,\infty)\to\C$ with $\norm F_{H^2,\sloc}<\infty$?
\medskip

A second question is whether the condition $\beta+2\gamma<3/2$ in Theorem~\ref{Thm1.2} is optimal for the global weighted estimate. The localized asymptotics in Proposition~\ref{Prop5.8} determine the powers of $h$ for each fixed smooth multiplier, but do not settle this global question.

For Theorem~\ref{Thm1.1}, we need only the diagonal choice $\beta=\gamma$, with both exponents approaching $1/2$ from below.

The threshold $2$ follows from the balance between the kernel weights and the required regularity of $F$. For $0\le\beta,\gamma<1/2$,
\[
\br\rho^{-\alpha}\br z^{-\beta}\br w^{-\gamma}\in L^2(G)
\quad\text{if}\quad
\alpha+2\beta+3\gamma>\frac72,
\]
whereas \eqref{eq1.3} requires only $\alpha+\beta+\gamma+\kappa$ derivatives of $F$. As $\beta$ and $\gamma$ approach $1/2$ from below, this order approaches $2$. We obtain the homogeneous weight from finite propagation speed~\cite{CM}, then apply weighted Cauchy-Schwarz and Calder\'on-Zygmund theory~\cite{CW,FS} to deduce the multiplier bounds. Theorem~\ref{Thm1.2} controls the $z$ and $w$ moments with fewer derivatives of $F$ than their homogeneous degrees $2$ and $3$ would suggest; this is the source of the gain over $s>7/2$.

Partial Fourier transformation in $(y,z,w)$ reduces the problem to
\[
H_{\eta,\zeta,\lambda}
=-\partial_x^2+\bigl(\eta+\zeta x+\lambda x^2/2\bigr)^2.
\]
The $z$ and $w$ moments correspond to derivatives in $\zeta$ and $\lambda$. We divide the Fourier parameter space into two regions, according to whether a well can escape to infinity. In the region away from this regime, we estimate the parameter derivatives using spectral clusters and iterated commutators. The estimates have polynomial losses in the inverse quadratic coefficient and a logarithmic loss at critical energies. We absorb the polynomial losses using the decay in the parameters of the kernel column $\psi(H)\delta_0$ of a smooth spectral cutoff $\psi$. To keep the required regularity in $L^2$ Sobolev spaces, we average divided differences over spectral dilations. For $f_\tau(E)=F(\tau^2E)$ and nonnegative spectral nodes, Lemma~\ref{Lem2.3} states
\[
\left(\int_1^2|f_\tau^{[k]}(E_0,\ldots,E_k)|^2\dd \tau\right)^{1/2}
\le C_{J,k}\norm F_{H^k},\qquad k\in\N_0,
\]
where $f_\tau^{[k]}$ is the divided difference of order $k$ and $F$ is supported in a compact interval $J\subset(0,\infty)$. We use this estimate for $k\le2$. After integration over the Fourier parameters, we use homogeneity to pass from the averaged estimates to bounds for a fixed multiplier, with the Sobolev orders in \eqref{eq1.3}.

The remaining region contains an escaping well. For fixed $\eta$ and $\zeta\ne0$, as $\lambda\to0$ one root of $\eta+\zeta x+\lambda x^2/2$ tends to $-\eta/\zeta$, while the other is $-2\zeta/\lambda+O(1)$. Both wells can contribute at energies in $J$. For kernel columns whose source variable lies near the bounded well, we compare the full kernel with that of a uniformly confining single-well operator. The eigenvalue gaps in~\cite[Theorem 7.6]{DM} control the separation between the spectral pairs in this comparison. Each eigenvalue of the comparison operator below the fixed energy cutoff corresponds to a rank-two spectral subspace of the full operator. In a frame adapted to reflection, a fixed matrix diagonalizes the restriction of the full operator to each such subspace. We control parameter derivatives using the separation between neighboring pairs, independently of the small splitting within a pair. By Agmon estimates and spectral projections~\cite{Agm,HS,Sim}, the errors in the averaged weighted $L^2$ comparison of these columns are exponentially small. These bounds are uniform over all eigenvalues below a fixed energy and include parameter derivatives up to order two.

Combining the estimates on these two regions, we obtain, for kernels $K_{F,h}$ localized where $|\zeta|+|\lambda|^{1/2}\asymp h$,
\[
\norm{z^kw^lK_{F,h}}_2\lesssim h^{3/2-k-2l}(1+|\log h|)^2
\norm F_{H^{k+l+\kappa}},\qquad k,l\in\N_0,\quad k+l\le2,\quad 0<h\le1.
\]
Interpolation between order zero and the two pure second-order estimates produces $h^{3/2-\beta-2\gamma}$. Summation over dyadic $h$ requires $\beta+2\gamma<3/2$, the range in Theorem~\ref{Thm1.2}.

Section~\ref{Sec2} establishes the Fourier conventions, estimates for the kernel columns of smooth cutoffs, and the averaged functional calculus, using Hulanicki's theorem~\cite{Hul} and the resolvent calculus~\cite[Chapter 8]{DS}. The spectral estimates in Section~\ref{Sec3} and the comparison with the escaping well in Section~\ref{Sec4} then yield the weighted kernel bounds. Section~\ref{Sec5} combines these estimates to prove Theorem~\ref{Thm1.2} and also derives the localized kernel asymptotics. Section~\ref{Sec6} uses the weighted bounds to prove Theorem~\ref{Thm1.1}.

\section{Fourier reduction and averaged functional calculus}\label{Sec2}
Throughout the paper, $C$, $c$ and $M$ denote positive constants that may change from line to line. Inner products are linear in the first variable. We denote by $\FA$ the Euclidean Fourier transform in $(y,z,w)$, with dual variables $(\eta,\zeta,\lambda)$ and phase $e^{-i(y\eta+z\zeta+w\lambda)}$. For a one-dimensional operator $T$, $k_T(u,v)$ denotes its integral kernel with respect to Lebesgue measure, and $u\mapsto k_T(u,v)$ at fixed $v$ is called a kernel column. Unless an extension by density is stated, kernel estimates are first proved for $F\in C_c^\infty(J)$, where $J\Subset(0,\infty)$ is a compact interval. For estimates stated with an arbitrary $\kappa>0$, choose $0<\kappa_0<\min\{\kappa,1\}$ and apply the estimate with loss $\kappa_0$. The bound with loss $\kappa$ follows from
\[
\norm F_{H^{r+\kappa_0}}\le\norm F_{H^{r+\kappa}},\qquad r\ge0.
\]

\subsection{Fourier reduction and kernel columns}\label{Sec2.1}
In the coordinates of Section~\ref{Sec1}, the group law is
\begin{equation}\label{eq2.1}
\begin{split}
(x,y,z,w)(x',y',z',w')
=\bigl(x+x',\ y+y',\ z+z'+xy',\
w+w'+xz'+\tfrac12x^2y'\bigr).
\end{split}
\end{equation}
Lebesgue measure is Haar measure. The left-invariant fields $X,Y,Z,W$ and the right-invariant horizontal fields $\Xt,\Yt$ are
\begin{align*}
X&=\partial_x,&Y&=\partial_y+x\partial_z+\tfrac12x^2\partial_w,
&Z&=\partial_z+x\partial_w,&W&=\partial_w,\\
\Xt&=\partial_x+y\partial_z+z\partial_w,
&\Yt&=\partial_y.&&&
\end{align*}
The dilations $\delta_r(x,y,z,w)=(rx,ry,r^2z,r^3w)$ have Jacobian $r^7$. The gauge $\rho$ is comparable with the left control distance from the identity and satisfies $\rho(g^{-1})\asymp\rho(g)$ and a quasi-triangle inequality. We use
\begin{equation}\label{eq2.2}
(f*k)(g)=\int_G f(v)k(v^{-1}g)\dd v,\qquad F(\calL)f=f*K_F.
\end{equation}
The kernel difference in the Calder\'on-Zygmund argument is $K(g_0^{-1}g)-K(g)$, with $g_0\in G$. Only the coordinate $w$ is central. Fourier transformation in the higher layers means the Euclidean partial Fourier transform in $(z,w)$.

Define the reduced operators by
\[
H_{\eta,\zeta,\lambda}=-\partial_x^2+\bigl(\eta+\zeta x+\lambda x^2/2\bigr)^2.
\]
The coefficients of $X$ and $Y$ depend only on $x$, so $\calL$ commutes with Euclidean translations in $(y,z,w)$. Thus $\FA$ conjugates $\calL$ to the direct integral of the operators $H_{\eta,\zeta,\lambda}$ acting in $x$, and $F(\calL)$ to the direct integral of $F(H_{\eta,\zeta,\lambda})$. Since $K_F=F(\calL)\delta_e$ in the sense of distributions, we obtain
\begin{equation}\label{eq2.3}
\FA K_F(x,\eta,\zeta,\lambda)=\bigl[F(H_{\eta,\zeta,\lambda})\delta_0\bigr](x).
\end{equation}
For smooth compactly supported $F$, both sides are smooth by the Schwartz calculus, so the identity is pointwise as well.

When $\lambda\ne0$, set $c=\zeta/\lambda$ and $\nu=(\zeta^2-2\eta\lambda)/2$. The map $(c,\nu,\lambda)\mapsto(\eta,\zeta,\lambda)$ has absolute Jacobian one. After translation,
\begin{equation}\label{eq2.4}
P_{\nu,\lambda}=-\partial_u^2+\bigl(\tfrac12\lambda u^2-\nu/\lambda\bigr)^2,
\qquad
\norm{K_F}_2^2=(2\pi)^{-3}\int_{\R^2}\norm{F(P_{\nu,\lambda})}_{\mathrm{HS}}^2\dd\nu\dd\lambda.
\end{equation}
The column in \eqref{eq2.3} becomes $k_{F(P)}(x+c,c)$. Integrating the squared modulus of this column in $x$ and $c$ gives $\norm{F(P_{\nu,\lambda})}_{\mathrm{HS}}^2$.

To estimate the moments in the higher layers, we localize in the corresponding Fourier variables. Choose $\vartheta\in C_c^\infty(\R^2\setminus\{0\})$ such that
\begin{equation}\label{eq2.5}
\sum_{j\in\mathbb Z}\vartheta(2^{-j}\zeta,2^{-2j}\lambda)=1
\quad\text{for }(\zeta,\lambda)\ne(0,0).
\end{equation}
The support of $\vartheta$ is contained in the annulus $\Omega_0=\{(a,b)\in\R^2:c_0\le |a|+|b|^{1/2}\le C_0\}$. For $h=2^j$ put
\[
\begin{gathered}
a=\zeta/h,\quad b=\lambda/h^2,\quad u=hx,\quad\theta=(\eta,a,b),\\
p_\theta(u)=\eta+au+\tfrac12bu^2,\qquad
P_{h,\theta}=-h^2\partial_u^2+p_\theta(u)^2,\qquad
v_F(h,\theta)=F(P_{h,\theta})\delta_0.
\end{gathered}
\]
Scaling the delta function, we obtain
\begin{equation}\label{eq2.6}
\FA K_F(x,\eta,ha,h^2b)=h\,v_F(h,\theta)(hx).
\end{equation}
Let $K_{F,h}$ be the kernel obtained by multiplying the left side of \eqref{eq2.3} by $\vartheta(\zeta/h,\lambda/h^2)$. Euclidean Plancherel yields
\begin{equation}\label{eq2.7}
\norm{z^kw^lK_{F,h}}_2^2=(2\pi)^{-3}h^{4-2k-4l}
\int_{\R^3}\norm{\partial_a^k\partial_b^l
(\vartheta v_F)(h,\theta)}_{L^2_u}^2\dd\theta.
\end{equation}
The factor $h^4$ is the Fourier Jacobian $h^3$ times $h^2$ from the prefactor in \eqref{eq2.6} and $h^{-1}$ from the change of variables $u=hx$.

Fix $\psi\in C_c^\infty((0,\infty))$ and put $\psi_\tau(E)=\psi(\tau^2E)$ for $1\le \tau\le2$.
\begin{lemma}\label{Lem2.1}
For every multi-index $\omega=(\omega_1,\omega_2)$, every $M,N\ge0$, and any finite number of $\tau$-derivatives,
\begin{equation}\label{eq2.8}
\norm{(1+P_{h,\theta})^M
\partial_a^{\omega_1}\partial_b^{\omega_2}
\psi_\tau(P_{h,\theta})\delta_0}_2
\le C h^{\omega_1+2\omega_2-1/2}(1+|\eta|+h|a|+h^2|b|)^{-N}.
\end{equation}
The same estimate holds after the specified $\tau$-derivatives, for all $h>0$.
\end{lemma}
\begin{proof}
Hulanicki's theorem defines a map from $\Sch(\R)$ to $\Sch(G)$ by $F\mapsto K_F$. See \cite{Hul} and \cite[Theorem 1.1]{MRT}. To check that the map has a closed graph, suppose that $F_n\to F$ in $\Sch(\R)$ and $K_{F_n}\to K$ in $\Sch(G)$. Then $F_n(\calL)\to F(\calL)$ in operator norm on $L^2(G)$ by the spectral theorem, and convolution with $K_{F_n}$ converges to convolution with $K$ by Young's inequality. It follows that $K=K_F$, so the closed graph theorem implies continuity. The kernels $K_{\psi_\tau}$ and their specified $\tau$-derivatives form a bounded set in $\Sch(G)$, whose partial Fourier transforms are bounded in $\Sch(\R^4)$. From \eqref{eq2.6},
\[
\partial_a^{\omega_1}\partial_b^{\omega_2}
\psi_\tau(P_{h,\theta})\delta_0
=h^{\omega_1+2\omega_2-1}(\partial_\zeta^{\omega_1}\partial_\lambda^{\omega_2}
\FA K_{\psi_\tau})(u/h,\eta,ha,h^2b).
\]
The substitution $u=hx$ proves \eqref{eq2.8} for $M=0$. For integer $M$, apply $(1+H_{\eta,\zeta,\lambda})^M$ to the differentiated Schwartz function before scaling. This is a differential operator with polynomial coefficients and preserves Schwartz bounds. For real $M\ge0$, use a larger integer power of $1+P_{h,\theta}$.
\end{proof}

Write $D=h\partial_u$ and $B=ip_\theta(u)$.
\begin{lemma}\label{Lem2.2}
For every word $A$ of length $m$ in $D,B$, every $R\ge1$, and all $h,\theta$,
\begin{equation}\label{eq2.9}
\norm{A\ind_{[0,R]}(P_{h,\theta})}_{2\to2}\le C_A R^{m/2}.
\end{equation}
In particular, $\norm{A(1+P_{h,\theta})^{-r}}_{2\to2}\le C_{A,r}$ whenever $r>m/2$.
\end{lemma}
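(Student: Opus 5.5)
The plan is to reduce \eqref{eq2.9} to the case of a single letter by induction on $m$, using that $P_{h,\theta}=D^*D+B^*B$ with $D^*=-D$, $B^*=-B$, so that $P_{h,\theta}=-D^2-B^2$. Write $P=P_{h,\theta}$ and $\Pi_R=\ind_{[0,R]}(P)$. The base case is the quadratic form identity
\[
\norm{Df}_2^2+\norm{Bf}_2^2=\ip{Pf}{f}\le R\norm f_2^2,\qquad f\in\Ran\Pi_R,
\]
which gives $\norm{D\Pi_R}+\norm{B\Pi_R}\le 2R^{1/2}$, with a constant independent of $h$ and $\theta$.

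For longer words we cannot simply iterate, since $D\Pi_R$ does not map into $\Ran\Pi_R$. Instead, I would prove the stronger statement
\[
\norm{A(1+P)^{-m/2}}_{2\to2}\le C_A
\]
for words $A$ of length $m$. This implies \eqref{eq2.9}, because $\norm{(1+P)^{m/2}\Pi_R}\le(1+R)^{m/2}\le 2^{m/2}R^{m/2}$. It also implies the second claim, since $(1+P)^{-r}=(1+P)^{-m/2}(1+P)^{m/2-r}$ and the second factor is a contraction.

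The induction proceeds through the commutators. The key relations are:
\[
[D,B]=ihp_\theta'(u),\qquad [D,[D,B]]=ih^2 b,\qquad [B,[D,B]]=0.
\]
The first commutator is the troublesome one, since $hp'_\theta=h(a+bu)$ is not bounded uniformly in $\theta$. However, $hp'_\theta$ is controlled by the operators themselves. One option is the relation $(hp'_\theta)^2=h^2a^2-2h^2b\eta+2h^2b\,p_\theta$, which follows from $p'^2=a^2+2b(p-\eta)$. A more useful route is that, on $\Omega_0$ in the relevant regime, one has the form bound
\[
\norm{h p'_\theta f}^2\lesssim \norm{(1+P)f}\norm f
\]
with a constant uniform in $h,\theta$. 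This comes from writing $ihp'_\theta=[D,B]$ and noting $|\ip{[D,B]f}{f}|\le 2\norm{Df}\norm{Bf}\le\ip{Pf}{f}$, and similarly for its square via $[D,[D,B]]$.

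With this in hand, even integer powers are handled by expanding $P^k f$. We show $\norm{Af}\lesssim\norm{(1+P)^{m/2}f}$ by moving letters past one another. Each commutator of two letters produces a term of the same homogeneity that is bounded relative to $(1+P)$, uniformly in the parameters. Odd $m$ then follow by combining this with the base case, and noninteger powers by interpolation (Heinz or Löwner).

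The main obstacle is uniformity in $(h,\theta)$. The commutator $[D,B]$ is lower order only in a semiclassical sense, and its coefficient $h(a+bu)$ may grow. I would first try to establish, purely algebraically, the operator inequality
\[
\pm i[D,B]\le -D^2-B^2=P
\]
(arguing as in the base case) together with
\[
-[D,B]^2\le C(1+P)^2.
\]
The latter would be obtained from an abstract Lie-algebraic estimate that uses $[B,[D,B]]=0$ and that $[D,[D,B]]$ is a constant of size $h^2|b|$. That constant is not uniformly bounded for large $h$. If it obstructs the argument, I would fall back on a scaling reduction: since $D,B$ are homogeneous, conjugating by the $u$-dilation reduces to $h=1$ with parameters rescaled, and in that normalization $P=-\partial_u^2+p^2$ satisfies the nilpotent (Engel) Rothschild–Stein-type estimate $\norm{Af}\lesssim\norm{(1+P)^{m/2}f}$. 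This estimate is uniform because it descends, via the unitary representations, from the group-level estimate $\norm{X^{\alpha}f}\lesssim\norm{(1+\calL)^{m/2}f}$ on $G$ for words in $X,Y$. Indeed, the representation sending $X\mapsto\partial_x$ and $Y\mapsto i(\eta+\zeta x+\lambda x^2/2)$ carries words in $X,Y$ to words in $D,B$ (after scaling), and operator norms of such representations are bounded by the group norms. This fallback is in fact the cleanest route, and I would adopt it as the main proof if the direct commutator bounds become messy.
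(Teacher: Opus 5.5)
Your first route fails at a concrete step. The bound $\norm{hp_\theta'f}^2\lesssim\norm{(1+P)f}\norm f$ cannot hold uniformly in $(h,\theta)$. Since $hp_\theta'=-i[D,B]$ corresponds to $-i\,d\pi(Z)$, which has homogeneous degree two, the left side has degree four and the right side has degree two. Concretely, take $\eta=b=0$. Then $P=-h^2\partial_u^2+a^2u^2$ and $hp_\theta'=ha$, so on the ground state the left side is $h^2a^2$ while the right side is $1+h|a|$. The bound also fails for fixed $h\le1$ and $(a,b)\in\Omega_0$ with $b>0$ as $\eta\to-\infty$, because the wells then sit at $|u|\asymp|\eta|^{1/2}$. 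The scale-consistent inequalities $\pm i[D,B]\le P$ (which you prove correctly) and $-[D,B]^2\le C(1+P)^2$ are true. However, deriving every word estimate uniformly from commutators would mean redoing subelliptic theory over all representations, and your sketch does not do that. Drop this route.

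Your fallback is a sound route, and it differs from the paper in what it transfers from the group. You transfer the $L^2(G)$-boundedness of $\mathcal A(1+\calL)^{-m/2}$, which is Folland's homogeneous estimate $\norm{\mathcal Af}_2\lesssim\norm{\calL^{m/2}f}_2$. This gives the endpoint $\norm{A(1+P)^{-m/2}}\le C_A$, which is stronger than the lemma.

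The step ``operator norms of such representations are bounded by the group norms'' is where the gap lies. It is not the trivial estimate $\norm{\pi(k)}\le\norm k_1$: for $m\ge1$ the kernel of $\mathcal A(1+\calL)^{-m/2}$ is singular like $\rho^{-7}$ at the identity and is not integrable. You must supply one of two arguments.
\begin{itemize}
\item Use amenability. Every unitary representation of the nilpotent group $G$ is weakly contained in the regular one, so $\norm{\pi(k)}$ is bounded by the $L^2(G)$ convolution norm of $k\in L^1(G)$. Apply this to the Schwartz kernels of $\mathcal A(1+\calL)^{-m/2}\chi_1(\calL/R)$, together with $\chi_1(P/R)=1$ on $\Ran\ind_{[0,R]}(P)$.
\item Use the direct-integral decomposition \eqref{eq2.3}. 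Under it, the norm of the left-invariant operator is the essential supremum of the fibre norms. Lower semicontinuity in the parameters and the rescaling $u=hx$ then reach every $(h,\theta)$.
\end{itemize}
Either way you also need $\pi(K_F)=F(P)$ and $\pi(\mathcal AK)=A\,\pi(K)$, as in \cite[Proposition 1.5]{MRT}.

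The paper avoids both Folland's estimate and any transference of singular kernels. Hulanicki's theorem makes $\mathcal AK_{\chi_1}$ integrable, and homogeneity gives $\norm{\mathcal AK_{\chi_1(\cdot/R)}}_1=R^{m/2}\norm{\mathcal AK_{\chi_1}}_1$. The trivial bound $\norm{\pi(k)}\le\norm k_1$ then finishes. The price is that dyadic summation yields only $r>m/2$, which is all the paper uses later.
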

\begin{proof}
The operators $D,B$, together with $[D,B]=ihp_\theta'$ and $[D,[D,B]]=ih^2b$, integrate to a unitary Engel representation. Explicitly, for $g=(x,y,z,w)$,
\[
(\pi(g)f)(u)=\exp\bigl(i[yp_\theta(u)+zhp_\theta'(u)+wh^2b]\bigr)\,f(u+hx).
\]
By \eqref{eq2.1}, $\pi(g)\pi(g')=\pi(gg')$, and $d\pi(X)=D$, $d\pi(Y)=B$. Write $\pi(k)=\int_G k(g)\pi(g^{-1})\dd g$, as in \cite[Proposition 1.5]{MRT}. Choose $\chi_1\in C_c^\infty(\R)$ equal to one on $[0,1]$. If $\mathcal A$ is the corresponding left-invariant differential operator, then $d\pi(\mathcal A)=A$. By Hulanicki's theorem, $\mathcal A K_{\chi_1}\in L^1(G)$. Homogeneity yields
\[
\norm{\mathcal A K_{\chi_1(\cdot/R)}}_1=R^{m/2}\norm{\mathcal A K_{\chi_1}}_1.
\]
By \cite[Proposition 1.5]{MRT}, $\pi(K_{\chi_1(\cdot/R)})=\chi_1(P/R)$. Integration by parts yields $\pi(\mathcal A K_{\chi_1(\cdot/R)})=A\chi_1(P/R)$. The norm of an integrated kernel is bounded by its $L^1$ norm, so applying these identities to
\[
A\ind_{[0,R]}(P)=A\chi_1(P/R)\ind_{[0,R]}(P)
\]
proves \eqref{eq2.9}. For the second assertion, split the spectrum of $P$ into $[0,1]$ and the intervals $(2^j,2^{j+1}]$, $j\ge0$. By \eqref{eq2.9}, the norm of $A(1+P)^{-r}$ restricted to these spectral subspaces is at most $C_A$ and $C_A2^{j(m/2-r)}$, respectively. The sum converges for $r>m/2$.
\end{proof}

\subsection{Averaged functional calculus}\label{Sec2.2}
Parameter derivatives of the functional calculus involve divided differences, whose averages over spectral dilations can be estimated in Sobolev norms. For $f_\tau(E)=F(\tau^2E)$ let $f_\tau^{[k]}$ denote the divided difference of order $k$, extended continuously to repeated nodes (the points $E_0,\ldots,E_k$ are called nodes).
\begin{lemma}\label{Lem2.3}
For $J\Subset(0,\infty)$, $k\in\N_0$ and arbitrary $E_0,\ldots,E_k\ge0$,
\begin{equation}\label{eq2.10}
\int_1^2 |f_\tau^{[k]}(E_0,\ldots,E_k)|^2\dd \tau
\le C_{J,k}\norm{F}_{\Hs{k}}^2,\qquad F\in C_c^\infty(J).
\end{equation}
For every compact interval $I\Subset(0,\infty)$, every $E,E'\in I$, $j\in\N_0$, $0<\kappa<1$ and $F\in C_c^\infty(J)$, one also has
\begin{equation}\label{eq2.11}
\norm{f_\tau^{(j)}(E')-f_\tau^{(j)}(E)}_{L^2_\tau(1,2)}
\le C_{I,j,\kappa}|E'-E|^\kappa\norm{F}_{\Hs{j+\kappa}}.
\end{equation}
\end{lemma}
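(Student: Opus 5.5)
The plan is to use the Hermite--Genocchi representation of divided differences,
\[
f_\tau^{[k]}(E_0,\ldots,E_k)=\int_{\Sigma_k} f_\tau^{(k)}\Bigl(\textstyle\sum_i t_iE_i\Bigr)\dd t
=\tau^{2k}\int_{\Sigma_k}F^{(k)}\bigl(\tau^2 E_t\bigr)\dd t,
\]
where $\Sigma_k$ is the standard simplex with its normalized measure (total mass $1/k!$) and $E_t=\sum_it_iE_i\ge0$. By Minkowski's integral inequality (or Cauchy--Schwarz in $t$),
\[
\Bigl(\int_1^2|f_\tau^{[k]}|^2\dd\tau\Bigr)^{1/2}\le 4^k\int_{\Sigma_k}\Bigl(\int_1^2|F^{(k)}(\tau^2E_t)|^2\dd\tau\Bigr)^{1/2}\dd t .
\]
It therefore suffices to bound $\int_1^2|G(\tau^2E)|^2\dd\tau$ uniformly in $E\ge0$, with $G=F^{(k)}$.

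For this, substitute $\sigma=\tau^2E$ when $E>0$. Then $\dd\tau=\dd\sigma/(2\sqrt{\sigma E})$, and $\sigma$ ranges over $[E,4E]$. Since $G$ is supported in $J=[j_0,j_1]$, the integral vanishes unless $E\ge j_0/4$. When it does not vanish, $\sqrt{\sigma E}\ge \sqrt{j_0}\cdot\sqrt{j_0}/2$, so $\int_1^2|G(\tau^2E)|^2\dd\tau\le C_J\norm{G}_2^2\le C_J\norm F_{H^k}^2$. For $E=0$ the integrand is $|G(0)|^2=0$, because $0\notin J$. This proves \eqref{eq2.10}, including repeated nodes, since the Hermite--Genocchi formula is continuous in the nodes. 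The key point is that averaging over dilations converts the pointwise value of $F^{(k)}$ into an $L^2$ norm, with Jacobian bounded thanks to the support of $F$ away from $0$.

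For \eqref{eq2.11}, write $f_\tau^{(j)}(E)=\tau^{2j}F^{(j)}(\tau^2E)$. The task reduces to showing that
\[
\norm{G(\tau^2E')-G(\tau^2E)}_{L^2_\tau(1,2)}\le C|E'-E|^\kappa\norm{G}_{H^\kappa},\qquad G=F^{(j)}.
\]
I would use a Fourier representation, $G(\tau^2E')-G(\tau^2E)=\int\hat G(\xi)(e^{i\tau^2E'\xi}-e^{i\tau^2E\xi})\dd\xi$, together with $|e^{ia}-e^{ib}|\le 2^{1-\kappa}|a-b|^\kappa$. This pointwise approach, however, only gives an $L^1$-type bound $\int|\hat G||\xi|^\kappa$, which costs more than $\kappa$ derivatives. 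The better route is to change variables. For $E,E'\in I$, the maps $\tau\mapsto\tau^2E$ and $\tau\mapsto\tau^2E'$ are diffeomorphisms with bounded Jacobians. Setting $\sigma=\tau^2E$ and $r=E'/E$, the norm becomes $C_I\norm{G(r\sigma)-G(\sigma)}_{L^2_\sigma}$ up to the bounded weight. By Plancherel, $\norm{G(r\cdot)-G}_2^2$ is controlled by $\int|\hat G(\xi)|^2\min(1,|r-1||\xi|)^{2\kappa}\dd\xi$, plus a harmless Jacobian term $r^{-1/2}-1$. Here one writes $\widehat{G(r\cdot)}(\xi)=r^{-1}\hat G(\xi/r)$ and estimates $\hat G(\xi/r)-\hat G(\xi)$. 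Alternatively, one interpolates between the trivial $L^2$ bound, valid for $\kappa=0$, and the bound $\norm{G(r\cdot)-G}_2\le C|r-1|\norm{\sigma G'}_2\le C_J|r-1|\norm G_{H^1}$, valid for $\kappa=1$ and using $\supp G\subset J$. Interpolating this linear operator family $G\mapsto G(r\cdot)-G$, restricted to functions supported near $J$ (via a fixed cutoff multiplication, bounded on $H^\kappa$), gives $|r-1|^\kappa\norm G_{H^\kappa}$. Since $|r-1|\le C_I|E'-E|$ and $\norm{F^{(j)}}_{H^\kappa}\le\norm F_{H^{j+\kappa}}$, the claim follows.

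The main obstacle is this interpolation step in \eqref{eq2.11}: one must handle the cutoff so that the support condition used in the $H^1$ endpoint is compatible with complex interpolation on $H^\kappa$. The factor $\tau^{2j}$ and the fact that $I$ is compact are routine.
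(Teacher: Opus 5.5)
Your proposal is correct and follows the paper's proof essentially step by step. For \eqref{eq2.10} you both use Hermite--Genocchi, Minkowski, and the dilation change of variables, with the Jacobian bounded because $\supp F\subset J$. For \eqref{eq2.11} you both change variables $\sigma=\tau^2E$ and interpolate between the trivial $L^2\to L^2$ bound and the $H^1\to L^2$ bound of order $|E'-E|$ from the fundamental theorem of calculus. The cutoff issue you flag is handled more simply in the paper by restricting the output, rather than the input, to the compact interval $K=[\min I,4\max I]$. On $K$ one has $\norm{\sigma\,G'(s\sigma)}_{L^2_\sigma(K)}\le C_K\norm{G}_{H^1}$ for every $G$, so no support condition is needed at the $H^1$ endpoint, and your Plancherel aside becomes unnecessary.
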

\begin{proof}
The function $f_\tau^{(k)}(v)=\tau^{2k}F^{(k)}(\tau^2v)$ vanishes unless $v$ belongs to a compact subinterval of $(0,\infty)$ that does not depend on $\tau$. On that interval, $r=\tau^2v$ has a uniformly nonzero derivative in $\tau$. Thus $\norm{f_\tau^{(k)}(v)}_{L^2_\tau}\le C\norm{F^{(k)}}_2$ uniformly for $v\ge0$. Let $\Delta_k=\{(t_0,\ldots,t_k):t_i\ge0,\ \sum_i t_i=1\}$, equipped with the coordinate measure $\dd t_1\cdots\dd t_k$, where $t_0=1-\sum_{i=1}^kt_i$. This measure has total mass $1/k!$. For $k=0$, it is the unit point mass. The Hermite-Genocchi formula
\[
f_\tau^{[k]}(E_0,\ldots,E_k)=\int_{\Delta_k}f_\tau^{(k)}\Bigl(\sum_{i=0}^k t_iE_i\Bigr)\dd t
\]
and Minkowski's inequality prove \eqref{eq2.10}. For \eqref{eq2.11}, change variables $r=\tau^2E$. The difference is a bounded smooth factor times $F^{(j)}((E'/E)r)-F^{(j)}(r)$ on a compact subinterval of $(0,\infty)$. The associated difference operator has bounded $L^2\to L^2$ norm and $H^1\to L^2$ norm $O(|E'-E|)$ by the fundamental theorem of calculus. Interpolation proves the claim.
\end{proof}

To apply these scalar estimates to vectors depending on $\tau$, we estimate their spectral coefficients. For a Hilbert space $\mathcal H$, write $L^2_\tau\mathcal H=L^2((1,2);\mathcal H)$ and $H^1_\tau\mathcal H=H^1((1,2);\mathcal H)$. When $\mathcal H=L^2(\R_u)$, we use $L^2_\tau L^2_u$ and $H^1_\tau L^2_u$. Applying the scalar one-dimensional Sobolev inequality to each coefficient of $\mathbf v\in H^1_\tau\mathcal H$ in an orthonormal basis $(\varphi_n)$ yields
\begin{equation}\label{eq2.12}
\sum_n\sup_{1\le \tau\le2}|\ip{\mathbf v(\tau)}{\varphi_n}|^2
\le C\int_1^2\bigl(\norm{\mathbf v(\tau)}^2+\norm{\mathbf v'(\tau)}^2\bigr)\dd \tau.
\end{equation}
In our applications, $\mathbf v(\tau)$ is $\psi_\tau(H)\delta_0$ or one of its parameter derivatives, where $\psi$ is a smooth cutoff. Their smooth dependence on $\tau$ supplies the regularity required in \eqref{eq2.12}, independently of $F$.

Put $\Lambda(h)=1+\log(1/h)$ for $0<h\le1$, and write $\ad_H(A)=[H,A]$, with $\ad_H^0(A)=A$. Let $H=H_\theta\ge0$ be a family of self-adjoint operators with compact resolvent on a fixed Hilbert space $\mathcal H$, depending on finitely many real parameters $\theta$, and let $0<h\le1$. Fix a parameter $\theta_0$ in the interior of the parameter set. For $m\in\N_0$ put
\[
\mathcal H_m=D((1+H_{\theta_0})^m),\qquad
\norm v_{\mathcal H_m}=\norm{(1+H_{\theta_0})^mv}_{\mathcal H}.
\]
Fix $J\Subset(0,\infty)$ and an interval $I\Subset(0,\infty)$ containing a neighborhood of $J_0=\bigcup_{1\le \tau\le2}\tau^{-2}J$.

\paragraph*{Hypothesis (H).} Conditions (H1) and (H2) hold on a neighborhood of $\theta_0$. Conditions (H3) and (H4) are imposed at $\theta_0$, with $H=H_{\theta_0}$ and all parameter derivatives evaluated there. In (H3) and (H4) we assume $0<h\le1$; a variant for $1\le h\le h_0$ is recorded at the end of Lemma~\ref{Lem2.4}.
\begin{enumerate}[label=(H\arabic*),leftmargin=3em]
\item There is a dense subspace of $\mathcal H$, invariant under every $H_\theta$, which is a core for every power of $1+H_\theta$. For each $m$, the norms $\norm{(1+H_\theta)^mv}$ with $\theta$ near $\theta_0$ are equivalent to $\norm v_{\mathcal H_m}$, with constants uniform on a neighborhood of $\theta_0$.
\item For every $m\in\N_0$ there is $r_m$ such that $\theta\mapsto H_\theta$ is a $C^2$ map with values in the bounded operators from $\mathcal H_{m+r_m}$ to $\mathcal H_m$.
\item For some $M_0$, $\Tr(1+H)^{-M_0}\le C_*h^{-1}$, and every interval of length $h$ in $I$ contains at most $C_*\Lambda(h)$ eigenvalues of $H$, counted with multiplicity.
\item The symmetric operators $A_i=\partial_iH$ and $B_{ij}=\partial_i\partial_jH$ satisfy, for a fixed $r_A$, every $m,N\in\N_0$, and every $A\in\{A_i,B_{ij}\}$,
\begin{align}
\norm{(1+H)^mA(1+H)^{-m-r_A}}&\le C_m,\label{eq2.13}\\
\norm{\ad_H^N(A)\ind_{[0,R]}(H)}&\le C_{N,R}h^N
\quad(R\ge1).\label{eq2.14}
\end{align}
\end{enumerate}
All operator norms in (H) are taken on $\mathcal H$.

\begin{lemma}\label{Lem2.4}
Assume (H), and evaluate the operators and parameter derivatives at $\theta_0$. For $k=0,1,2$, every parameter derivative $\partial^k$ of total order $k$, and $\mathbf v\in H^1_\tau\mathcal H$,
\begin{equation}\label{eq2.15}
\norm{(\partial^kf_\tau(H))\mathbf v(\tau)}_{L^2_\tau\mathcal H}
\le C\Lambda(h)^k\norm{F}_{\Hs{k}}
\norm{\mathbf v}_{H^1_\tau\mathcal H}.
\end{equation}
For fixed structural indices $M_0$ and $r_A$ in (H), the constant $C$ is bounded by a polynomial, depending only on these indices, $k$, $J$ and $I$, in finitely many of the quantitative constants in (H3) and (H4). In particular, if these constants grow polynomially in an auxiliary parameter, so does $C$. If the bound $C_*\Lambda(h)$ in (H3) is replaced by $C_*$, then $\Lambda(h)$ may be replaced by $1$ in \eqref{eq2.15}. The same conclusion with this replacement holds for $1\le h\le h_0$, provided (H1) and (H2) hold for each $h$ in that interval, and the trace bound and the bounds in (H4) are uniform there; the constant may then depend on $h_0$.
\end{lemma}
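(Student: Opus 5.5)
We will prove \eqref{eq2.15} using the Daletskii--Krein expansion of parameter derivatives of the functional calculus in the eigenbasis of $H=H_{\theta_0}$. By (H1)--(H3), $H$ has discrete spectrum with eigenpairs $(E_n,\varphi_n)$, and the trace bound makes $\sum_n(1+E_n)^{-M_0}\le C_*h^{-1}$ finite.

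\textbf{Matrix elements of the derivatives.} For $k=1$,
\[
\ip{\partial_i f_\tau(H)\varphi_m}{\varphi_n}=f_\tau^{[1]}(E_n,E_m)\,(A_i)_{nm}.
\]
For $k=2$,
\[
\ip{\partial_i\partial_j f_\tau(H)\varphi_m}{\varphi_n}
=f_\tau^{[1]}(E_n,E_m)(B_{ij})_{nm}
+\sum_l\bigl[f_\tau^{[2]}(E_n,E_l,E_m)(A_i)_{nl}(A_j)_{lm}+(i\leftrightarrow j)\bigr].
\]
Both formulas are justified first for a smooth compactly supported function and for the $C^2$ dependence provided by (H2), using the resolvent calculus together with Helffer--Sjöstrand or Cauchy integrals, and then by taking matrix elements.

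\textbf{Expanding $\mathbf v$ and using (H4).} Write $\mathbf v(\tau)=\sum_m v_m(\tau)\varphi_m$. By \eqref{eq2.12}, $\sum_m\sup_\tau|v_m|^2\lesssim\norm{\mathbf v}_{H^1_\tau\mathcal H}^2$. Since $f_\tau$ is supported in $\tau^{-2}J\subset J_0\subset I$, every divided difference vanishes unless at least one node lies in $I$; more precisely, the Hermite--Genocchi formula requires some node of the simplex to reach $J_0$. Now split each matrix element into three pieces.

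\emph{Both indices in $I$, far apart.} The commutator bound \eqref{eq2.14} gives $|E_n-E_m|^N|A_{nm}|\le C_Nh^N$, since $\ad_H^N(A)_{nm}=(E_n-E_m)^NA_{nm}$. So $A$ is essentially banded at scale $h$ in energy, with off-band decay $(h/|E_n-E_m|)^N$.

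\emph{One index in $I$, the other large.} Here \eqref{eq2.13} gives decay $(1+E)^{-m}$ in the large energy. Combined with the trace bound, which controls the eigenvalue count polynomially as $h^{-1}$ times powers of $E$, this makes the contribution summable.

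\emph{Both indices within a window of size $h$.} Hypothesis (H3) gives at most $C_*\Lambda(h)$ eigenvalues per window of length $h$. A Schur test over the banded matrix $(\sup|A_{nm}|)$, with rows grouped into $h$-windows, therefore costs one factor $\Lambda(h)$ per occurrence of $A$.

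\textbf{Bounding the $L^2_\tau$ norm.} Take the $L^2_\tau$ norm and use Minkowski's inequality to move it inside the finite sums over $m$ (respectively over $m$ and $l$). Apply \eqref{eq2.10} to get $\norm{f_\tau^{[k]}(\dots)}_{L^2_\tau}\le C\norm F_{H^k}$ uniformly in the nodes, and estimate $|v_m(\tau)|$ by $\sup_\tau|v_m|$. What remains is a Schur or Cauchy--Schwarz estimate on the matrices $(|A_{nm}|)$ and $(|B_{nm}|)$ restricted to the region where a node lies in $I$, weighted by the off-band decay.
\begin{itemize}
\item For $k=1$ this gives $C\Lambda(h)\norm F_{H^1}$.
\item For $k=2$, the $B$ term costs one $\Lambda(h)$. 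The $AA$ term involves the intermediate index $l$; summing over $l$ costs at most $\Lambda(h)^2$, provided the decay of $A$ handles large or far $l$.
\end{itemize}
All constants are polynomial in $C_*$, $C_m$ and $C_{N,R}$ for finitely many $m,N$, which gives the polynomial dependence asserted in the lemma. If the window count is $C_*$ instead of $C_*\Lambda(h)$, the factors $\Lambda(h)$ simply disappear. For $1\le h\le h_0$ the same argument runs with $h$ bounded, so that case needs no separate treatment.

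\textbf{Main obstacle.} The hard step is the $k=2$ intermediate sum over $l$ when $E_l$ lies outside $I$ while $E_n$ or $E_m$ lies inside. Here $f^{[2]}$ does not vanish, but it is still bounded in $L^2_\tau$ by \eqref{eq2.10}. I would control this term by the decay $|A_{nl}|\lesssim(1+E_l)^{-m'}$ from \eqref{eq2.13}, applied on the side of the index in $I$. Summing over $l$ then uses the trace bound; the resulting $h^{-1}$ is compensated by choosing $N$ large in the off-band decay $(h/\operatorname{dist})^N$, since $\operatorname{dist}(E_l,I)$ is bounded below once $E_l$ is outside a neighbourhood of $I$.
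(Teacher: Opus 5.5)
Your proposal is correct and follows the paper's proof essentially step for step: the Daletskii--Krein formulas \eqref{eq2.17}--\eqref{eq2.18}, coefficient control by \eqref{eq2.12}, Minkowski in $\tau$ with the node-uniform bound \eqref{eq2.10}, off-band decay from \eqref{eq2.14} with a Schur test over $h$-windows costing one $\Lambda(h)$ per factor, and $h^N$ gains together with \eqref{eq2.13} and the trace bound to absorb indices outside $I$. Two details need tidying. First, a divided difference vanishes unless some node lies in $J_0$ because of its expression in function values; Hermite--Genocchi only forces the convex hull of the nodes to meet $J_0$. Second, for $1\le h\le h_0$ no window count is assumed, so the number of eigenvalues in $I$ must instead be bounded by $(1+\sup I)^{M_0}\Tr(1+H)^{-M_0}$.
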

\begin{proof}
By (H1) and (H2), the resolvent identity yields the first and second parameter derivatives of the resolvent, after replacing the domain by $D((1+H)^{m+r})$ with a larger $r$ for each factor $A_i$ or $B_{ij}$. At the parameter being evaluated, choose an eigenbasis, $H\varphi_n=E_n\varphi_n$, and write $A_{nm}=\ip{A\varphi_m}{\varphi_n}$. On a bounded energy interval, \eqref{eq2.14} and the $N=0$ bound give
\begin{equation}\label{eq2.16}
|A_{nm}|\le C_N\min\{1,(h/|E_n-E_m|)^N\}.
\end{equation}
Use the first bound for coincident nodes. Divide $I$ into intervals of length $h$ and take $N=2$. The matrix with entries $|A_{nm}|$, restricted to $E_n,E_m\in I$, then has row and column sums at most $C\Lambda(h)$. End intervals shorter than $h$ are absorbed into neighboring intervals. For $h$ bounded below, the trace bound controls the total number of eigenvalues in $I$, since this number is at most $(1+\sup I)^{M_0}\Tr(1+H)^{-M_0}$. The same argument applies on $1\le h\le h_0$ under the stated uniform bounds.

For an almost analytic extension $\widetilde f_\tau$ of sufficiently high order, the resolvent formula is
\[
f_\tau(H)=\frac1\pi\int_{\C}\partial_{\bar z}\widetilde f_\tau(z)(H-z)^{-1}\dd A(z),
\]
where $dA$ is Lebesgue measure on $\C$ and $\partial_{\bar z}=\tfrac12(\partial_{\Re z}+i\partial_{\Im z})$. See \cite[Chapter 8]{DS}. Differentiation between the stated graph spaces yields
\begin{align}
(\partial_i f_\tau(H))_{nm}
&=f_\tau^{[1]}(E_n,E_m)(A_i)_{nm},\label{eq2.17}\\
(\partial_i\partial_j f_\tau(H))_{nm}
&=f_\tau^{[1]}(E_n,E_m)(B_{ij})_{nm}\notag\\
&\quad+\sum_\ell f_\tau^{[2]}(E_n,E_\ell,E_m)
\bigl((A_i)_{n\ell}(A_j)_{\ell m}+(A_j)_{n\ell}(A_i)_{\ell m}\bigr).
\label{eq2.18}
\end{align}
These are initially identities on the core in (H1). The estimates below justify all matrix sums for $\mathbf v\in H^1_\tau\mathcal H$.

Write $\mathbf v_m(\tau)=\ip{\mathbf v(\tau)}{\varphi_m}$ and set $c_m=\sup_\tau|\mathbf v_m(\tau)|$. By \eqref{eq2.12}, $\sum_mc_m^2\le C\norm{\mathbf v}_{H^1_\tau\mathcal H}^2$. For the first derivative with both indices in $I$, weighted Cauchy-Schwarz in each row and \eqref{eq2.10} give
\[
\int_1^2\sum_n\Big|\sum_m A_{nm}f_\tau^{[1]}(E_n,E_m)\mathbf v_m(\tau)\Big|^2\dd \tau
\le C\Lambda(h)^2\norm{F}_{\Hs1}^2\sum_mc_m^2.
\]
For the second derivative, consider the index triples $(n,\ell,m)$ in \eqref{eq2.18}. The absolute weights $|(A_i)_{n\ell}(A_j)_{\ell m}|$ have total row and column sums at most $C\Lambda(h)^2$. For the second derivative we obtain $C\Lambda(h)^4\norm{F}_{\Hs2}^2\sum_mc_m^2$. The term containing $B_{ij}$ is estimated by the argument for the first derivative, using $\norm F_{H^1}\le C\norm F_{H^2}$. For $k=0$, apply \eqref{eq2.10} separately to each spectral coefficient and sum using \eqref{eq2.12}.

We now allow nodes outside $I$. A divided difference in \eqref{eq2.17} or \eqref{eq2.18} vanishes unless one node lies in $J_0$. This follows from its formula in function values, including the limits at repeated nodes. If another node lies outside $I$, then in a pair $(n,m)$ or a triple $(n,\ell,m)$ two consecutive indices have energies that differ by at least a positive constant. On a bounded energy region the corresponding factor is $O(h^N)$ for every $N$ by \eqref{eq2.16}.

For unbounded indices, the trace and graph bounds imply, for some $q$,
\begin{equation}\label{eq2.19}
\sum_n|A_{n\ell}|\le Ch^{-1/2}(1+E_\ell)^q.
\end{equation}
To see this, apply Cauchy-Schwarz after inserting $(1+E_n)^{-m}$, with $2m\ge M_0$, and then use \eqref{eq2.13}. If $E_n\le R$ and $E_\ell\ge2R$, symmetry and \eqref{eq2.14} give
\begin{equation}\label{eq2.20}
|A_{n\ell}|\le C_{N,R}h^N(1+E_\ell)^{-N}.
\end{equation}
For example, if $E_n\in J_0$ and $E_\ell\ge2R$, summing over the remaining index by \eqref{eq2.19}, and then summing $n,\ell$ with the trace bound, we obtain
\[
\sum_{E_n\in J_0,\ E_\ell\ge2R}\sum_m|(A_i)_{n\ell}(A_j)_{\ell m}|
\le C_Nh^{N-5/2},
\]
after increasing $N$ to dominate $q$. If the middle node is bounded and the final node unbounded, use \eqref{eq2.20} for the last factor. The sums over bounded nodes contribute at most $O(h^{-2})$. If the node in $J_0$ is the middle node, sum the two adjacent factors separately, using \eqref{eq2.20} for an unbounded endpoint. The case with the final node in $J_0$ follows by taking adjoints. If all nodes are bounded, there are $O(h^{-3})$ triples, and a factor whose two indices are separated contributes $O(h^{N-3})$ in total.

The index triples with a node outside $I$ have total absolute coefficient sum $O(h^L)$ for every $L$. Minkowski's inequality, \eqref{eq2.10}, and $c_m\le(\sum_mc_m^2)^{1/2}$ bound their contribution by the right side of \eqref{eq2.15}. For terms with a single factor $A_i$, the same alternatives involve two nodes. If one node lies in $J_0$ and the other outside $I$, \eqref{eq2.16} and \eqref{eq2.20}, followed by the trace bound, give a total sum $O(h^{N-2})$. This also controls the single $B_{ij}$ term. Each constant above is a finite sum of products of the constants in (H3) and (H4). The asserted polynomial dependence follows.
\end{proof}

Fix a smooth cutoff $\psi$ supported in a compact subinterval of $(0,\infty)$ and write $\psi_\tau(E)=\psi(\tau^2E)$.
\begin{corollary}\label{Cor2.5}
Assume (H), and evaluate the operators and parameter derivatives at $\theta_0$. For every integer $M\ge0$, every $k\in\{0,1,2\}$, and $1\le \tau\le2$,
\[
\norm{(1+H)^M(\partial^k\psi_\tau(H))(1+H)^M}_{2\to2}
\le C_{M,\psi}\Lambda(h)^k.
\]
The estimate holds after any finite number of $\tau$-derivatives. If each $h$-interval contains only a uniformly bounded number of eigenvalues, replace $\Lambda(h)$ by $1$.
\end{corollary}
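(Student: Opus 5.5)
The plan is to repeat the matrix argument from the proof of Lemma~\ref{Lem2.4}, with two changes. The vector $\mathbf v$ is replaced by an arbitrary unit vector in $\mathcal H$. The Sobolev averaging in $\tau$ is replaced by pointwise bounds on divided differences of the smooth cutoff $\psi_\tau$.

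\emph{Reduction.} Since $\psi$ is fixed and smooth, for every $k\le2$ we have
\[
\sup_{1\le\tau\le2}\;\sup_{E_0,\ldots,E_k\ge0}\bigl|\psi_\tau^{[k]}(E_0,\ldots,E_k)\bigr|\le C_\psi .
\]
This follows from the Hermite--Genocchi formula used in Lemma~\ref{Lem2.3}, because $\|\psi_\tau^{(k)}\|_\infty\le C$ uniformly in $\tau$. The same holds for $\tau$-derivatives, which are again of the form $\tau^{-1}\cdot$(a smooth cutoff supported in a $\tau$-independent compact subset of $(0,\infty)$). So it suffices to treat a single $\tau$ and a single smooth compactly supported cutoff.

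Fix an eigenbasis $H\varphi_n=E_n\varphi_n$. By \eqref{eq2.17} and \eqref{eq2.18}, the matrix entries of $(1+H)^M(\partial^k\psi_\tau(H))(1+H)^M$ are
\[
(1+E_n)^M(1+E_m)^M\,\psi_\tau^{[1]}(E_n,E_m)(A_i)_{nm}
\]
for $k=1$. For $k=2$ they are the analogous $B_{ij}$ term plus sums over $\ell$ of $\psi_\tau^{[2]}(E_n,E_\ell,E_m)$ times products of $A$-entries. For $k=0$ the operator is $(1+H)^{2M}\psi_\tau(H)$, which is bounded by the spectral theorem.

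\emph{Bounded nodes.} We bound the operator norm by the Schur test, that is, by the maximum of the row and column sums of the absolute entries. As in Lemma~\ref{Lem2.4}, a divided difference vanishes unless some node lies in $\tau^{-2}\supp\psi\subset J_0$.

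First suppose all nodes lie in $I$. Then the weights $(1+E)^M$ are bounded. The bound \eqref{eq2.16} with $N=2$ gives row and column sums of $|A_{nm}|$ at most $C\Lambda(h)$, and at most $C$ when each $h$-interval contains boundedly many eigenvalues. For triples the products have sums at most $C\Lambda(h)^2$. The $B_{ij}$ term costs $\Lambda(h)\le\Lambda(h)^2$.

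\emph{Unbounded nodes.} This is the main obstacle, because the weights $(1+E_m)^M$ grow. Consider pairs and triples in which one node lies in $J_0$ and another lies outside $I$, or is large. There we use \eqref{eq2.20}, namely $|A_{n\ell}|\le C_Nh^N(1+E_\ell)^{-N}$ for $E_n\le R$ and $E_\ell\ge2R$, with $N\gg M+q$. This absorbs the polynomial weights. The row sums of the remaining factor are controlled by \eqref{eq2.19}, which costs $h^{-1/2}(1+E)^q$, and the summation over indices is controlled by the trace bound $\Tr(1+H)^{-M_0}\le C_*h^{-1}$.

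For nodes in the intermediate band between $I$ and $2R$, the energies are bounded but separated from $J_0$ by a fixed distance. There \eqref{eq2.16} gives $O(h^N)$, and the counting is $O(h^{-3})$. Altogether these contributions have total absolute sum $O(h^L)$ for any $L$, hence $O(1)$, and the Schur bound applies to them as well. The dependence on $\psi$ and $M$ enters only through $\sup|\psi^{[k]}|$ and the choice of $N$, which gives the constant $C_{M,\psi}$.
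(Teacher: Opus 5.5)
Your plan is the paper's proof: the formulas \eqref{eq2.17}--\eqref{eq2.18} with uniformly bounded divided differences of the smooth $\psi_\tau$ and its $\tau$-derivatives, bounded weights when all nodes lie in $I$, and otherwise \eqref{eq2.16}, \eqref{eq2.19}, \eqref{eq2.20}, the trace bound and Schur's test. There is, however, one step where the estimates you cite do not do what you claim.

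You say that \eqref{eq2.20} ``absorbs the polynomial weights'' and that the remaining row sums are controlled by the unweighted bound \eqref{eq2.19}. Take $k=2$ and a triple $(n,\ell,m)$ with $E_n\in J_0$, $E_\ell\ge 2R$ and $E_m$ also large. There $\psi_\tau^{[2]}(E_n,E_\ell,E_m)=\psi_\tau(E_n)(E_n-E_\ell)^{-1}(E_n-E_m)^{-1}$ need not vanish. The bound \eqref{eq2.20} gives decay only in $E_\ell$, through the factor $(A_i)_{n\ell}$. The endpoint weight $(1+E_m)^M$ multiplies $(A_j)_{\ell m}$, both of whose indices are unbounded. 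So \eqref{eq2.20} does not apply to that factor, and the unweighted \eqref{eq2.19} says nothing about $\sum_m(1+E_m)^M|(A_j)_{\ell m}|$.

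The missing ingredient is the weighted form of \eqref{eq2.19},
\[
\sum_m(1+E_m)^M|A_{\ell m}|\le C_Mh^{-1/2}(1+E_\ell)^{q_M}.
\]
It is proved exactly like \eqref{eq2.19}. Insert $(1+E_m)^{-m'}$ with $2m'\ge M_0$, and apply Cauchy--Schwarz and the trace bound. Then use \eqref{eq2.13} at the level $(1+H)^{M+m'}$ to get $\norm{(1+H)^{M+m'}A\varphi_\ell}\le C(1+E_\ell)^{M+m'+r_A}$. The growth $(1+E_\ell)^{q_M}$ is then absorbed by the decay from \eqref{eq2.20}, which is consistent with your choice $N\gg M+q$.

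This weighted estimate is precisely what the paper's proof adds to the argument of Lemma~\ref{Lem2.4}. With it your sketch goes through. You should also add the paper's closing remark: the absolute bounds justify passing from finite weighted matrix sums to the weighted derivative operator computed between graph spaces.
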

\begin{proof}
Use \eqref{eq2.17}-\eqref{eq2.18} with the smooth function $\psi_\tau$. Its divided differences, and their specified $\tau$-derivatives, are uniformly bounded. For $E_n,E_m\in I$, the factors $(1+E_n)^M(1+E_m)^M$ are uniformly bounded. Outside $I$, the weighted version of \eqref{eq2.19} is
\[
\sum_n(1+E_n)^M|A_{n\ell}|
\le C_Mh^{-1/2}(1+E_\ell)^{q_M}.
\]
The arbitrarily high energy decay in \eqref{eq2.20} absorbs $q_M$ and the endpoint weights. The absolute sum over index triples with a node outside $I$ is again $O(h^L)$. Schur's test proves the claim. These absolute bounds allow us to pass to the limit in the finite weighted matrix sums, and the limits are the weighted derivatives computed between graph spaces.
\end{proof}

To apply this calculus to the kernel column $f_\tau(H)\delta_0$, choose $\psi=1$ on a neighborhood of $J$, supported in a larger compact subinterval of $(0,\infty)$, and use
\begin{equation}\label{eq2.21}
f_\tau(H)\delta_0=f_\tau(H)\psi_\tau(H)\delta_0.
\end{equation}
The lemmas are then applied with that larger interval. For two parameter derivatives, the vectors needed are $\psi_\tau(H)\delta_0$, $\partial_i\psi_\tau(H)\delta_0$ and $\partial_i\partial_j\psi_\tau(H)\delta_0$, in $H^1_\tau\mathcal H$.

\section{Uniform spectral estimates}\label{Sec3}
\subsection{Quartic spectral clusters}\label{Sec3.1}
The trace, cluster and commutator estimates for the quartic family have constants with polynomial losses in $|b|^{-1}$ and $|\eta|$. We use these estimates only where $|b|^{-1}\lesssim\br\eta$. Let $p(u)=\eta+au+bu^2/2$, $b\ne0$, and $P=-h^2\partial_u^2+p^2$.
\begin{lemma}\label{Lem3.1}
For $0<h\le1$, $M>3/4$ and $I\Subset(0,\infty)$,
\begin{align}
\Tr(1+P)^{-M}&\le C_Mh^{-1}|b|^{-1/2},\label{eq3.1}\\
\#\bigl(\Spec(P)\cap[E,E+h]\bigr)
&\le C_I(1+|b|^{-1/2})\Lambda(h),\label{eq3.2}
\end{align}
whenever $E,E+h\in I$. The constants are independent of $a,\eta$. The trace estimate \eqref{eq3.1} remains valid for all $h>0$.
\end{lemma}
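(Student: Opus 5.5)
The plan is to treat the two estimates separately: a heat-kernel (Golden--Thompson) argument for the trace bound \eqref{eq3.1}, and Dirichlet--Neumann bracketing combined with a one-dimensional phase-integral count for the cluster bound \eqref{eq3.2}. The common geometric input is a sublevel-set bound for the quadratic $p$, whose leading coefficient is $b/2$:
\[
|\{u:|p(u)-c|\le s\}|\le C(s/|b|)^{1/2}
\qquad\text{for all } c\in\R,\ s>0,
\]
uniformly in $a,\eta,c$. I would verify this by completing the square, since $\{|p-c|\le s\}$ is a sublevel set of $|b|\,|(u-u_0)^2-\text{const}|/2$. This uniformity is what makes all constants independent of $a$ and $\eta$.

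\emph{Trace bound.} By the Golden--Thompson inequality (equivalently Feynman--Kac with the potential $p^2\ge0$),
\[
\Tr e^{-tP}\le(4\pi th^2)^{-1/2}\int_\R e^{-tp(u)^2}\dd u .
\]
Using the sublevel bound with $c=0$ and layer-cake over the dyadic sets $\{|p|\le 2^j t^{-1/2}\}$, the integral is at most $C|b|^{-1/2}t^{-1/4}$. Hence
\[
\Tr e^{-tP}\le Ch^{-1}|b|^{-1/2}t^{-3/4}
\qquad\text{for all } t,h>0 .
\]
Then I would write
\[
(1+P)^{-M}=\Gamma(M)^{-1}\int_0^\infty t^{M-1}e^{-t}e^{-tP}\dd t .
\]
The $t$-integral converges at $0$ exactly when $M-3/4>0$, which gives \eqref{eq3.1} for every $h>0$.

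\emph{Cluster bound.} Fix $E\in I$ and put $V=p^2$ and $E^*=2\sup I$.
\begin{itemize}
\item The set $\{V\le E^*\}$ is a union of at most two intervals. Its total length is at most $C(1+|b|^{-1/2})$, again by the sublevel bound.
\item I would cut $\R$ at the endpoints of these intervals and at the at most three critical points of $V$, namely the zeros of $pp'$. This gives a bounded number of pieces, on each of which $V$ is monotone or exceeds $E^*$.
\item Neumann bracketing bounds the counting function of $P$ on $[E,E+h]$ by the sum of the Neumann counts of the pieces, up to an error of $O(1)$ per cut. The forbidden pieces contribute nothing, since the Neumann bottom there is at least $E^*>E+h$.
\item On a piece where $V$ is monotone, I would use Sturm--Liouville oscillation (a Prüfer-angle comparison) to obtain
\[
N_{\mathrm{piece}}(E')=\frac1{\pi h}\int_{\mathrm{piece}}\sqrt{(E'-V)_+}\,\dd u+O(1),
\]
uniformly in the monotone potential and with the $O(1)$ error independent of $h$ and the piece.
\end{itemize}

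Taking differences, the window count per piece is at most
\[
\frac{C}{h}\int\Bigl(\sqrt{(E+h-V)_+}-\sqrt{(E-V)_+}\Bigr)\dd u+O(1)
\le \frac Ch\int_{\{V\le E^*\}}\min\Bigl\{\sqrt h,\frac{h}{\sqrt{|E-V|}}\Bigr\}\dd u+O(1).
\]
To estimate the integral I would decompose dyadically in $s=|E-V|\in[2^jh,2^{j+1}h]$. Since $E\asymp1$, the condition $|p^2-E|\le s$ forces $|p\mp\sqrt E|\lesssim s$, so the sublevel bound gives measure $\lesssim s^{1/2}|b|^{-1/2}$. On the remaining part, the set $\{s\gtrsim1\}$ has bounded length times $(1+|b|^{-1/2})$. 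Each of the $O(\log(1/h))$ dyadic scales then contributes $O(|b|^{-1/2})$, and summing yields the bound $C_I(1+|b|^{-1/2})\Lambda(h)$ in \eqref{eq3.2}. The logarithm reflects the energy passing near a critical value of $V$.

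The step I expect to be the main obstacle is the uniform $O(1)$ error in the oscillation count on each monotone piece. It must hold uniformly as the slope of $V$ degenerates near turning points and critical points. I would first try the standard Prüfer comparison, namely that the phase increment differs from $h^{-1}\int\sqrt{(E-V)_+}$ by at most $\pi$ times a constant for monotone $V$. If that fails near critical points, I would further cut at the scale where $|V-E|\sim h$ and absorb those pieces trivially, at the cost of $O(1)$ per dyadic scale, which is still within $\Lambda(h)$.
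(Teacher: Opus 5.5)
Your proposal is correct and follows essentially the paper's proof. For (3.1) you use the Golden--Thompson heat-trace bound with a uniform bound on $\int e^{-tp^2}$; the paper completes the square instead of using a layer-cake decomposition. For (3.2) you use a Pr\"ufer-angle Weyl count together with the dyadic estimate of $\Theta(E+h)-\Theta(E)$, and the only structural difference is that you decouple the pieces by Dirichlet--Neumann bracketing, whereas the paper counts zeros of a single solution on a large Dirichlet interval.

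Your primary hope of an $O(1)$ error per monotone piece is not needed, and the Pr\"ufer comparison does not give it: its error $\frac12\int|k'/k|$ is a multiple of $|\log\frac{E-V(a_2)}{E-V(a_1)}|$, hence only $O(\log(1/h))$ after cutting at $|V-E|=h$. Your fallback is exactly the paper's argument, except that the transition pieces $\{|V-E|\le h\}$ cost $O(1+|b|^{-1/2})$ (Sturm comparison plus the measure bound), not $O(1)$.
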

\begin{proof}
The heat trace inequality reads
\[
\Tr e^{-tP}\le(4\pi h^2t)^{-1/2}\int_\R e^{-tp(u)^2}\dd u.
\]
It follows from the Trotter product formula and the Golden-Thompson inequality; see \cite{SimFI}. One first proves it for the Dirichlet realization on a finite interval and then lets the interval increase to $\R$. The Dirichlet traces increase to the trace on the line. Complete the square and rescale. Uniformly for $c\in\R$,
\[
\int_\R e^{-(v^2+c)^2}\dd v=\int_0^\infty e^{-(s+c)^2}s^{-1/2}\dd s
\le 2+\sqrt\pi.
\]
The resulting bound is $\Tr e^{-tP}\le Ch^{-1}|b|^{-1/2}t^{-3/4}$. Integrating against $t^{M-1}e^{-t}/\Gamma(M)$ proves \eqref{eq3.1}.

Put $U=p^2$ and $\Theta(E)=\int_\R\sqrt{(E-U(u))_+}\dd u$. For an energy $E$ we call $\{U<E-h\}$ the allowed region, $\{U>E+h\}$ the forbidden region, and $\{|U-E|\le h\}$ the transition region. On each of the at most two monotonic branches of $u\mapsto p(u)$, the inverse derivative is
\[
C|b|^{-1/2}|v-c|^{-1/2}\ind_{I_c}(v),\qquad c=\eta-a^2/(2b),
\]
where $I_c$ is a half-line. Since $E$ remains in a positive compact interval, $|v^2-E|\le h$ is contained in two intervals of length $O_I(h)$ for small $h$. The integral of $|v-c|^{-1/2}$ on any interval of length $h$ is $O(\sqrt h)$, uniformly in its location. Consequently,
\begin{equation}\label{eq3.3}
|\{u:|U(u)-E|\le h\}|\le C_I|b|^{-1/2}h^{1/2}.
\end{equation}
Moreover,
\begin{equation}\label{eq3.4}
0\le\Theta(E+h)-\Theta(E)\le C_I|b|^{-1/2}h\Lambda(h).
\end{equation}
On $|v^2-E|\le2h$ the difference of the square roots is $O(\sqrt h)$, so \eqref{eq3.3} suffices. On the remaining allowed region it is at most $Ch(E-v^2)^{-1/2}$. Decompose into dyadic distances $d\asymp2^jh$ from the endpoints $\pm\sqrt E$. On a piece of length $O(d)$, the second factor is $O_I(d^{-1/2})$, while the integral of $|v-c|^{-1/2}$ is $O(\sqrt d)$. Each piece contributes $O_I(h|b|^{-1/2})$, and there are $O_I(\Lambda(h))$ pieces.

For single-well potentials, a Bohr-Sommerfeld approximation with logarithmic error is proved in \cite[Proposition 2.6]{DMII}. For $U$ with up to two wells, the following zero count is uniform in $a$, $\eta$ and $b$. On a component of $U<E-h$, set $k(u)=h^{-1}\sqrt{E-U(u)}$. Write a nonzero real solution as $v=r\sin\varphi$, $v'=kr\cos\varphi$, with a continuous angle. Differentiation gives
\[
\varphi'=k+\frac{k'}{2k}\sin(2\varphi).
\]
At a zero of $v$, $\varphi'=k>0$, so the zero count on any such interval differs from the angle increment divided by $\pi$ by at most two. The angle increment differs from $\int k$ by at most
\[
\frac12\int |k'/k|\dd u.
\]
The potential $U$ has at most four monotonic pieces, so the numbers of allowed and transition components are uniformly bounded. Split the allowed region into these monotonic pieces. On a monotonic piece with endpoints $a_1,a_2$,
\[
\int_{a_1}^{a_2}|k'/k|\,du
=\frac12\left|\log\frac{E-U(a_2)}{E-U(a_1)}\right|
\le\frac12\log(E/h).
\]
Here endpoints are interpreted by one-sided limits. Summing over the bounded number of pieces, the last integral is $O_I(\Lambda(h))$. On each component $C$ of $|U-E|\le h$, Sturm comparison bounds the number of zeros by $1+Ch^{-1/2}|C|$. By \eqref{eq3.3}, all these components contribute $O_I(1+|b|^{-1/2})$. A component of $U>E+h$ contains at most one zero, because two zeros would contradict the positive Dirichlet energy identity between them. The action omitted in the transition regions, divided by $h$, is $O_I(|b|^{-1/2})$.

Choose a finite interval whose endpoints lie outside $\{U\le E+h\}$. Apply Sturm oscillation to the solution vanishing at its left endpoint. This splitting introduces only a bounded number of endpoints, since $U$ has at most four monotonic pieces. Zeros at a splitting point change the total by at most that number. The two exterior forbidden intervals contribute at most two further zeros, independently of their lengths. The estimate remains uniform as the Dirichlet interval increases to $\R$.

The Dirichlet eigenvalues decrease to the eigenvalues on the line by the min-max principle: compactly supported smooth functions approximate every finite collection of eigenfunctions in the form norm, and domain inclusion supplies the reverse inequality. The counts on finite intervals converge at energies outside the spectrum on the line. At an eigenvalue, the two one-sided counts differ by one, since a confining one-dimensional Schr\"odinger operator has simple spectrum. The same estimate holds with either endpoint convention. All spatial splittings were made at $U=E\pm h$, so they remain valid when $E$ equals a critical value of $U$. The result is
\begin{equation}\label{eq3.5}
\#\{E_n\le E\}=\frac{\Theta(E)}{\pi h}+O_I\bigl((1+|b|^{-1/2})\Lambda(h)\bigr).
\end{equation}
For $h$ bounded below, the same conclusion follows from the trace estimate with different constants. Subtract \eqref{eq3.5} at $E+h$ and $E$, and use \eqref{eq3.4}. Endpoint conventions change the count only by a bounded number. Thus \eqref{eq3.2} also holds at the top of the barrier.
\end{proof}

\begin{lemma}\label{Lem3.2}
Fix $\eta\in\R$ and write $\theta=(a,b)$. If $(a,b)$ ranges in a bounded set and $b\ne0$, the family $P_{h,\theta}$ satisfies Hypothesis (H) of Section~\ref{Sec2} for $0<h\le1$. Put $L=1+|\eta|+|b|^{-1}$. The quantitative constants in (H3) and (H4) are bounded by $CL^C$. Condition (H3) follows from Lemma~\ref{Lem3.1}; the first and second derivatives with respect to $a,b$ satisfy \eqref{eq2.13}-\eqref{eq2.14}, with $r_A=3$. For $h\ge1$, each fixed finite collection of estimates \eqref{eq2.13}-\eqref{eq2.14} has constants bounded by $C(1+h)^CL^C$. The common graph domains and local norm equivalence in (H1), and the $C^2$ property in (H2), hold for every fixed $h>0$.
\end{lemma}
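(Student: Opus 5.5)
We verify (H1)--(H4) in turn for $P=P_{h,\theta}=D^*D+B^*B$ with $D=h\partial_u$, $B=ip_\theta$, keeping track of the dependence on $L=1+|\eta|+|b|^{-1}$. For (H1) we take the Schwartz space $\Sch(\R)$ as the common invariant core. The operator $P$ is essentially self-adjoint on $\Sch(\R)$ and has compact resolvent, since $p_\theta^2\to\infty$ when $b\ne0$. For the norm equivalence on a neighborhood of $\theta_0$, write
\[
P_\theta-P_{\theta_0}=(p_\theta-p_{\theta_0})(p_\theta+p_{\theta_0}).
\]
This difference is a polynomial of degree at most four in $u$, with coefficients that are small near $\theta_0$. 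It is bounded relative to $1+P_{\theta_0}$ because $u^4\lesssim b^{-2}(1+p_{\theta_0}^2)$ up to constants depending on $\eta,a$. Iterating with the commutator bounds from Lemma~\ref{Lem2.2} gives the equivalence of $\norm{(1+P_\theta)^m v}$ for all $m$. Condition (H2) follows because $\theta\mapsto P_\theta$ is polynomial in $(a,b)$ with coefficients $u$-polynomials. These are bounded from $\mathcal H_{m+r_m}$ to $\mathcal H_m$ by the same word estimates.

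For (H3) we invoke Lemma~\ref{Lem3.1} directly. Estimate \eqref{eq3.1} with $M_0=1$ gives $\Tr(1+P)^{-1}\le Ch^{-1}|b|^{-1/2}\le CLh^{-1}$. Estimate \eqref{eq3.2} gives the cluster bound $CL\Lambda(h)$.

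For (H4) we compute the derivatives:
\[
\partial_aP=2up,\quad \partial_bP=u^2p,\quad \partial_a^2P=2u^2,\quad \partial_a\partial_bP=u^3,\quad \partial_b^2P=\tfrac12u^4.
\]
The central step is to express $u$ through the words available in Lemma~\ref{Lem2.2}. Since $[D,B]=ihp'$ and $[D,[D,B]]=ih^2b$, we have
\[
u=\frac{p'-a}{b}=\frac{[D,B]/(ih)-a}{b}.
\]
This expression has the wrong power of $h$. Instead we use the quadratic identity
\[
bu^2/2+au=p-\eta,
\]
which gives $u$ as a root: on the support where $|u|$ is large, $|u|\lesssim (|b|^{-1}(|p|+|\eta|+a^2/|b|))^{1/2}$. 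The simplest route is therefore to bound $\norm{u^jv}$ by $CL^C\norm{(1+P)^{j/4+1}v}$. This follows from the pointwise inequality $|u|^4\le CL^C(1+p^2)$, which holds since $b\ne0$ and $a,b$ are bounded. Combining this with Lemma~\ref{Lem2.2} and commuting powers of $(1+P)$ past polynomials in $u$ via $[P,u]=-2h^2\partial_u$ yields \eqref{eq2.13} with $r_A=3$; the worst term is $u^4$, which costs one power, and sandwiching costs at most two more.

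For \eqref{eq2.14} we note that $\ad_P$ of a polynomial $q(u)$ equals
\[
-h^2(q''+2q'\partial_u)=-h(hq''+2q'D).
\]
Each application of $\ad_P$ therefore gains one factor of $h$ and produces words in $D$, $p$ and polynomials in $u$ of controlled degree. For $\partial_aP,\partial_bP$ we also use $[P,p]=-h(hp''+2p'D)$, where $p'=a+bu$ and $p''=b$. By induction, $\ad_P^N(A)=h^N\sum c\,q(u)\,W$, where $W$ are words in $D,B$ of length at most $2N+4$ and $q$ are bounded-degree polynomials with coefficients polynomial in $a,b$. Composing with $\ind_{[0,R]}(P)$ and using the $u$-bound together with \eqref{eq2.9} gives $C_{N,R}L^Ch^N$. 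The main obstacle is to do this bookkeeping uniformly: we must ensure that the polynomial loss in $|b|^{-1}$ and $|\eta|$ coming from $|u|^4\lesssim L^C(1+p^2)$ stays polynomial after iterated commutation. This is where the constant $CL^C$ arises. For $h\ge1$ the same expansions hold verbatim, with the extra powers of $h$ from $D=h\partial_u$ absorbed into $(1+h)^C$. The trace bound \eqref{eq3.1} is valid for all $h>0$.
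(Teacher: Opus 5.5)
Your overall route is the paper's. It uses the explicit derivatives \eqref{eq3.6} and the pointwise bound $u^2\le4|b|^{-1}|p|+4|b|^{-1}|\eta|+4a^2|b|^{-2}$ to trade powers of $u$ for powers of $p$ at cost $CL^C$. It then applies Lemma~\ref{Lem2.2} to words in $D,B$, uses commutators that gain a factor $h$, Lemma~\ref{Lem3.1} for (H3), and polynomial dependence on $\theta$ for (H2). The (H3) part and \eqref{eq2.14} are fine as sketched.

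The genuine gap is the fixed shift $r_A=3$ in \eqref{eq2.13}. Since $r_A$ may not depend on $m$, you must use
\[
(1+P)^mA(1+P)^{-m-3}=\sum_{j=0}^m\binom mj\ad_P^j(A)(1+P)^{-j-3}.
\]
You then need to show that $\ad_P^j(A)$ costs strictly fewer than $j+3$ powers of $1+P$, for every $j$. Your justification (``the worst term $u^4$ costs one power, sandwiching costs at most two more'') says nothing about growth in $j$. It also conflicts with your own bound $\norm{u^jv}\le CL^C\norm{(1+P)^{j/4+1}v}$, which charges two powers for $u^4$.

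The bookkeeping you set up for \eqref{eq2.14} is too coarse to supply this. Words of length up to $2N+4$ already need more than $N+2$ powers by Lemma~\ref{Lem2.2}. That leaves less than one power for the prefactor $q(u)$, whose degree grows with $N$, since each commutator $[P,D]=-2hpp'$ adds a factor $p'$.

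What is missing is a count in which each commutator costs at most half a power. The paper assigns degree one to each of $u,p,p',D$ in normal-ordered monomials $u^\alpha p^\beta(p')^\gamma D^\delta$. The $u$-derivatives of $u,p,p'$ are $1,p',b$, and $(p^2)^{(r)}$ has degree at most $2$. So by \eqref{eq3.7} each $\ad_P$ raises the degree by at most one and produces a factor $h$. A monomial of degree $k$ reduces, at cost $CL^C$, to words in $D,p$ of length at most $k$. Dyadic summation in Lemma~\ref{Lem2.2} then gives $\norm{\ad_P^j(A)(1+P)^{-j-3}}\le C_jL^{C_j}$, because $(4+j)/2<j+3$.

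Your (H1) argument is also incomplete. Relative boundedness of $P_\theta-P_{\theta_0}$ with respect to $1+P_{\theta_0}$ needs $\norm{p^2v}_2\le C\norm{(1+P)v}_2$. Lemma~\ref{Lem2.2} does not give this, since it bounds $B^2(1+P)^{-r}$ only for $r>1$. You need the integration-by-parts identity for $\norm{Pv}_2^2$ together with $|p'^2+pp''|\le\delta p^4+C_\delta$.

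For higher powers, the strict inequality $r>m/2$ in Lemma~\ref{Lem2.2} loses a little at each step. So ``iterating its commutator bounds'' does not by itself give exact equivalence of $\norm{(1+P_\theta)^mv}$ and $\norm{(1+P_{\theta_0})^mv}$. A perturbative induction can work, but only after checking that the terms with at least one commutator have slack, which needs a finer count than the degree count above. You also only assert that the domains of all powers coincide and that $\Sch(\R)$ is a core for each power.

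The paper obtains all of this at once by proving
\[
c_m\mathcal N_{4m}(v)\le\norm{(1+P_\theta)^mv}_2+\norm v_2\le C_m\mathcal N_{4m}(v),\qquad \mathcal N_s(v)=\sum_{r+2j\le s}\norm{u^r\partial_u^jv}_2 .
\]
The middle term is bounded above and below by a $\theta$-independent norm. This is combined with a cutoff argument on the maximal domain and density of $C_c^\infty(\R)$ in these weighted norms.
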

\begin{proof}
The derivatives are
\begin{equation}\label{eq3.6}
\partial_aP=2up,\quad \partial_bP=u^2p,\quad
\partial_a\partial_bP=u^3,\quad
\partial_a^2P=2u^2,\quad\partial_b^2P=u^4/2.
\end{equation}
With $D=h\partial_u$, the normal-ordered commutator is
\begin{equation}\label{eq3.7}
[P,fD^j]=-2hf'D^{j+1}-h^2f''D^j-f\sum_{r=1}^j\binom jr h^r(p^2)^{(r)}D^{j-r}.
\end{equation}
For bounded $a,b$ and $b\ne0$, Young's inequality gives
\[
|u|^2\le 4|b|^{-1}|p(u)|+4|b|^{-1}|\eta|+4a^2|b|^{-2}.
\]
Thus every integer $n\ge0$ satisfies
\[
\|u^nv\|_2\le C_n L^{C_n}\bigl(\|v\|_2+\|p^{\lceil n/2\rceil}v\|_2\bigr),
\qquad L=1+|\eta|+|b|^{-1}.
\]
Consider monomials in normal order $u^\alpha p^\beta(p')^\gamma D^\delta$, and assign their degree to be $\operatorname{deg}=\alpha+\beta+\gamma+\delta$. Their scalar coefficients are kept separate from this degree. Expanding $p'=a+bu$ and applying the preceding inequality to $p^\beta D^\delta v$ reduces each monomial to words in $D,p$ of length at most $\operatorname{deg}$: for $n=\alpha+k$, $0\le k\le\gamma$,
\[
\lceil n/2\rceil+\beta+\delta
\le n+\beta+\delta\le \operatorname{deg}\quad(n\ge1),
\]
and the case $n=0$ is immediate. Lemma~\ref{Lem2.2} now bounds the spectral restriction by
\[
\|u^\alpha p^\beta(p')^\gamma D^\delta\ind_{[0,R]}(P)\|
\le C_{\alpha,\beta,\gamma,\delta}L^C R^{\operatorname{deg}/2},\qquad R\ge1.
\]

Differentiation in $u$ does not increase the degree of a polynomial in $u,p,p'$: the derivatives of these three generators are $1,p',b$. The derivatives $(p^2)^{(r)}$, $r=1,\ldots,4$, are $2pp'$, $2(p')^2+2bp$, $6bp'$ and $6b^2$, of degrees $2$, $2$, $1$ and $0$. In \eqref{eq3.7}, the first term has degree at most $\operatorname{deg}+1$, the second at most $\operatorname{deg}$, and a term in the sum has degree at most $\operatorname{deg}+2-r\le \operatorname{deg}+1$. Every term also has a factor $h$. Induction shows that
\[
\ad_P^j(A)=h^j\sum_\mu c_{j,\mu}(h,a,b)
u^{\alpha_\mu}p^{\beta_\mu}(p')^{\gamma_\mu}D^{\delta_\mu},
\qquad\alpha_\mu+\beta_\mu+\gamma_\mu+\delta_\mu\le \operatorname{deg}(A)+j,
\]
where $\operatorname{deg}(A)\le4$ for all the operators in \eqref{eq3.6}. For $0<h\le1$, the coefficients are bounded by constants depending on $j$ and the bounded coefficient set. The resulting estimate is
\[
\|\ad_P^j(A)\ind_{[0,R]}(P)\|\le C_jL^{C_j}h^j R^{(\operatorname{deg}(A)+j)/2}.
\]
This gives \eqref{eq2.14} with $C_{N,R}=C_NL^{C_N}R^{(\operatorname{deg}(A)+N)/2}$. The dyadic summation in Lemma~\ref{Lem2.2} also implies
\[
\|\ad_P^j(A)(1+P)^{-j-3}\|\le C_jL^{C_j},
\]
since $(\operatorname{deg}(A)+j)/2-j-3\le-1-j/2$. On the common core,
\[
(1+P)^mA(1+P)^{-m-3}=\sum_{j=0}^m\binom mj\ad_P^j(A)(1+P)^{-j-3}.
\]
The bounded extensions of the terms on the right prove \eqref{eq2.13} with the shift $r_A=3$, independently of $m$. When $h\ge1$, the same finite expansions retain only finitely many additional powers of $h$, so their constants have polynomial growth in $h,L$ as asserted.

We now check Hypothesis (H) of Section~\ref{Sec2} for $H_\theta=P_{h,\theta}$, with $h>0$ fixed, $\theta=(a,b)$ in a bounded set and $b\ne0$. Condition (H3) is Lemma~\ref{Lem3.1}, and (H4) is \eqref{eq2.13}-\eqref{eq2.14}. Only these two conditions enter the constants in Lemma~\ref{Lem2.4}. The qualitative conditions (H1) and (H2) justify differentiation of the resolvent.

For (H1), note first that, for $v\in\Sch(\R)$,
\[
\|Pv\|_2^2=\|h^2v''\|_2^2+\|p^2v\|_2^2+2h^2\|pv'\|_2^2-2h^2\int_\R(p'^2+pp'')|v|^2\dd u.
\]
Since $b\ne0$, $|p'^2+pp''|\le\delta p^4+C_\delta$ for every $\delta>0$, locally uniformly in $\theta$. For $s\in\N_0$, put $\mathcal N_s(v)=\sum_{r+2j\le s}\|u^r\partial_u^jv\|_2$. After retaining the mixed positive term and absorbing the lower order terms, the identity implies $\mathcal N_4(v)\le C\|(1+P)v\|_2$. Apply this estimate to $u^r\partial_u^jv$ with $r+2j\le s$. The commutator with $P$ contains monomials of weight at most $r+2j+1$, so the normal-ordered estimate is
\[
\mathcal N_{s+4}(v)\le C_s\bigl(\mathcal N_s((1+P)v)+\mathcal N_{s+1}(v)\bigr).
\]
The last term is absorbed using
\[
\mathcal N_{s+1}(v)\le\delta\mathcal N_{s+4}(v)+C_{s,\delta}\mathcal N_s(v).
\]
Indeed, for a highest order term with $r\ge1$, use $|u|^r\le\delta|u|^{r+3}+C_\delta|u|^{r-1}$. For a pure derivative, use one-dimensional Fourier interpolation. Choosing $\delta$ small and then taking $s=4m-4$ yields by induction
\[
c_m\,\mathcal N_{4m}(v)\le\|(1+P_\theta)^mv\|_2+\|v\|_2\le C_m\,\mathcal N_{4m}(v),
\]
with constants locally uniform in $\theta$. For the domain assertion, suppose first that $v,Pv\in L^2$ distributionally. Local elliptic regularity permits integration by parts against $\chi_R^2v$, where $\chi_R(u)=\chi(u/R)$ is a real cutoff. The resulting form identity bounds $h(\chi_Rv)'$ and $p\chi_Rv$ uniformly; its cutoff error is $h^2\|\chi_R'v\|_2^2$. Passing to the limit places $v$ in the form domain and then in the operator domain. Apply the base estimate to $\chi_Rv$: the commutator terms $-2h^2\chi_R'v'-h^2\chi_R''v$ tend to zero in $L^2$, so the estimate extends to this maximal domain. For higher powers, induct on $m\ge2$ and set $s=4m-4$. Using the displayed commutator bound, proceed in two steps:
\begin{enumerate}[label=(\alph*),leftmargin=3em]
\item Apply the maximal-domain estimate to words of weight at most $s-1$, using the induction hypothesis to control their commutators. This yields $\mathcal N_{s+3}(v)<\infty$.
\item Apply the same estimate to words of weight at most $s$; the extra weight required by their commutators is controlled by the preceding step. This yields $\mathcal N_{s+4}(v)<\infty$ and extends the recurrence to the next operator power.
\end{enumerate}
Conversely, Leibniz's rule identifies each such power on the weighted Sobolev space. By cutting off and then mollifying, we approximate every function in the weighted Sobolev space in the $\mathcal N_{4m}$ norm. Thus both $C_c^\infty(\R)$ and $\Sch(\R)$ are cores for every power. Since $\mathcal N_{4m}$ does not depend on $\theta$, the graph norms of nearby parameters are equivalent with locally uniform constants, and $\Sch(\R)$ is a dense invariant subspace which is a core for every power of $1+P_\theta$.

For (H2), the potential $p_\theta^2$ is a polynomial of degree two in $\theta$, so by \eqref{eq3.6}
\[
P_\theta=P_{\theta_0}+\sum_i(\theta-\theta_0)_iA_i+\tfrac12\sum_{i,j}(\theta-\theta_0)_i(\theta-\theta_0)_jB_{ij},
\]
where $A_i=\partial_iP_{\theta_0}$ and the $B_{ij}$ are independent of $\theta$. By \eqref{eq2.13} at $\theta_0$, $A_i$ and $B_{ij}$ are bounded from $\mathcal H_{m+3}$ to $\mathcal H_m$. The map $\theta\mapsto P_\theta$ is polynomial with values in the bounded operators from $\mathcal H_{m+3}$ to $\mathcal H_m$, as required in (H2) with $r_m=3$.
\end{proof}

\subsection{A uniformly confining comparison family}\label{Sec3.2}
The region in which the second well moves to infinity requires estimates uniform as $b$ tends to zero. We obtain them from a comparison operator with one uniformly confining well. In this subsection and the next section, $\xi$ denotes the one-dimensional spatial variable. The comparison operators act on $L^2(\R,d\xi)$. Fix $\sigma\in\{-1,1\}$. Choose an even $\chi_0\in C_c^\infty(\R)$, equal to one on $[-2,2]$ and supported in $[-4,4]$. For a sufficiently small $d_0>0$, define
\[
\Vt_{\sigma,t}(\xi)=\sigma \xi+\tfrac t2 \xi^2\chi_0(t\xi/d_0),
\qquad U_{\sigma,t}=\Vt_{\sigma,t}^2,
\qquad \Ht_{\ee,t}=-\ee^2\partial_\xi^2+U_{\sigma,t}(\xi).
\]
Choose $d_0$ so small that both $|r\chi_0(r/d_0)|$ and $|r\chi_0(r/d_0)+r^2\chi_0'(r/d_0)/(2d_0)|$ are less than $1/4$. Direct differentiation gives
\begin{align}
c\xi^2\le U_{\sigma,t}(\xi)&\le C\xi^2,
&c\xi^2\le \xi U_{\sigma,t}'(\xi)&\le C\xi^2,\label{eq3.8}\\
|\partial_t^j\partial_\xi^kU_{\sigma,t}(\xi)|
&\le C_{j,k}(1+|\xi|)^{2+j-k}.
&&\label{eq3.9}
\end{align}
These bounds hold uniformly for bounded $t$. In the transition region, \eqref{eq3.9} follows from $|t\xi|\asymp d_0$. Elsewhere it follows from the polynomial formula.

\Needspace{8\baselineskip}
\begin{lemma}\label{Lem3.3}
Uniformly for $|t|\le t_0$ and $0<\ee\le\ee_0$, the following assertions hold.
\begin{enumerate}[label=(\roman*),leftmargin=3em]
\item The simple eigenvalues $e_n$ of $\Ht_{\ee,t}$, indexed by $n\ge0$, satisfy
\begin{equation}\label{eq3.10}
c\ee(n+1)\le e_n\le C\ee(n+1),\qquad e_{n+1}-e_n\ge c\ee.
\end{equation}
\item Write $\Ht=\Ht_{\ee,t}$. On the common domain of $(1+\Ht)^m$,
\begin{equation}\label{eq3.11}
\norm{(1+\Ht)^mv}_2\asymp_m\sum_{r+j\le2m}\norm{\xi^r(\ee\partial_\xi)^jv}_2,
\qquad m\in\N_0.
\end{equation}
The constants are independent of $t,\ee$, and $\Sch(\R)$ is a core for every such power.
\item Let $r,j,\ell\in\N_0$ and let $T$ denote multiplication by $\xi^r\partial_t^j\partial_\xi^\ell U_{\sigma,t}$. Put $s_T=\lceil(r+2+j)/2\rceil$. For every $m,N\in\N_0$ and $R\ge1$,
\[
\|(1+\Ht_{\ee,t})^mT(1+\Ht_{\ee,t})^{-m-s_T}\|\le C_{m,r,j,\ell},
\]
\[
\|\ad_{\Ht_{\ee,t}}^N(T)\ind_{[0,R]}(\Ht_{\ee,t})\|
\le C_{N,r,j,\ell}\ee^N (1+R)^{\lceil(r+2+j+N)/2\rceil}.
\]
The same bounds hold for multiplication by $\xi^r$, with the displayed shifts and exponents taken at $j=0$. For every fixed finite range of $r,j,\ell,m,N$, the constants depend on only finitely many of the derivative bounds in \eqref{eq3.9} and are uniform in $t,\ee$.
\item Fix $\ee>0$ and a reference parameter $t_*\in[-t_0,t_0]$, and set $\mathcal H_m=D((1+\Ht_{\ee,t_*})^m)$. For every $m\in\N_0$, the map
\[
t\longmapsto\Ht_{\ee,t}\quad\hbox{belongs to}\quad
C^2\bigl([-t_0,t_0];\mathcal L(\mathcal H_{m+3},\mathcal H_m)\bigr).
\]
Its first two derivatives are multiplication by $\partial_tU_{\sigma,t}$ and $\partial_t^2U_{\sigma,t}$, including at $t=0$. Their bounds and the Taylor remainder bounds are uniform for $0<\ee\le\ee_0$ in these graph norms.
\end{enumerate}
\end{lemma}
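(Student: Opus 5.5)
The plan is to exploit that $U=U_{\sigma,t}$ behaves uniformly like the harmonic potential $\xi^2$, by \eqref{eq3.8}-\eqref{eq3.9}, and to compare $\Ht_{\ee,t}$ with the rescaled harmonic oscillator $-\ee^2\partial_\xi^2+\xi^2$ (spectrum $\ee(2n+1)$) throughout.

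For (i), the quadratic bounds $c\xi^2\le U\le C\xi^2$ and min-max give $c'\ee(2n+1)\le e_n\le C'\ee(2n+1)$. Simplicity holds because the operator is a confining one-dimensional Schr\"odinger operator. The gap bound $e_{n+1}-e_n\ge c\ee$ is the step I expect to be the main obstacle, since min-max comparison alone does not give it. I would first rescale $\xi=\ee^{1/2}y$. This turns $\Ht$ into $\ee(-\partial_y^2+\ee^{-1}U(\ee^{1/2}y))$, and the rescaled potential $W_\ee(y)=\ee^{-1}U(\ee^{1/2}y)$ satisfies $cy^2\le W_\ee\le Cy^2$ and $cy^2\le yW_\ee'\le Cy^2$ uniformly. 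Such potentials form a uniform class of doubling single-well potentials, so I would invoke the uniform eigenvalue-gap estimate of \cite[Theorem 7.6]{DM}, which gives gaps of order one for $-\partial_y^2+W$. To show the hypotheses of that theorem hold uniformly in $t,\ee$, I would use the virial-type condition $yW'\asymp y^2$. Failing that, a direct route is also available. Gaps of order one in the rescaled problem follow from the Bohr--Sommerfeld count with bounded error for monotone single wells, since the action $\Theta(E)$ has derivative $\asymp1$ per unit energy by $yW'\asymp y^2$. Combined with the same zero-count/phase argument as in Lemma~\ref{Lem3.1}, this controls counts in windows of length $c$. The bound $e_{n+1}-e_n\ge c\ee$ then holds with no logarithm, because $|k'/k|$ is integrable away from the single turning points with uniform constants.

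For (ii), I would prove the base case $m=1$ by expanding $\|\Ht v\|^2$ as in Lemma~\ref{Lem3.2}. The cross term $2\ee^2\int U|v'|^2-\ee^2\int U''|v|^2$ is handled with $U''=O(1)$ and $U\gtrsim\xi^2$, giving $\|\xi^2v\|+\|\ee^2v''\|+\|\xi\ee v'\|\lesssim\|(1+\Ht)v\|$. The intermediate terms $\xi^r(\ee\partial)^j$ with $r+j\le2$ follow by interpolation. I would then induct on $m$ using commutators $[\Ht,\xi^r(\ee\partial_\xi)^j]$, which gain a factor $\ee$ and do not increase the weight $r+j$, by \eqref{eq3.9}. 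The core property follows from cutoff and mollification as in Lemma~\ref{Lem3.2}, and all constants depend only on \eqref{eq3.8}-\eqref{eq3.9}.

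For (iii), I would use \eqref{eq3.9} to get $|\xi^r\partial_t^j\partial_\xi^\ell U|\lesssim\br\xi^{r+2+j}$, together with the same bound on its $\xi$-derivatives. By (ii), multiplication by such a function maps $\mathcal H_{m+s_T}$ to $\mathcal H_m$. For the commutators, each application of $\ad_{\Ht}$ to $f(\xi)(\ee\partial)^k$ produces $\ee$ times terms of the form $f'(\ee\partial)^{k+1}$, $\ee f''(\ee\partial)^k$ and $fU^{(r)}$-type terms, each raising the total weight by at most one, exactly as in \eqref{eq3.7}. Restricting to $\ind_{[0,R]}(\Ht)$ and applying (ii) with $m=\lceil\text{weight}/2\rceil$ then gives the stated power of $R$. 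For (iv), I would Taylor expand $U_{\sigma,t}$ in $t$ with remainder controlled by \eqref{eq3.9} at $j=3$, and apply (iii) with shift $\lceil(2+3)/2\rceil=3$ to get $C^2$ regularity into $\mathcal L(\mathcal H_{m+3},\mathcal H_m)$. Norm equivalence at different $t$ is supplied by (ii), and $t=0$ is included since the formula is smooth in $t$.
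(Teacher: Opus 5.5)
Your proposal follows the paper's proof essentially step for step. It uses the rescaling $\xi=\sqrt\ee\,y$ to place $W$ in the class $\mathcal P_1(C/c)$ of \cite[Theorem 7.6]{DM}, combined with $e_n\asymp\ee(n+1)$ from harmonic comparison. It then uses the integration-by-parts base estimate and commutator recurrence for \eqref{eq3.11}, Leibniz weight counting for (iii), and Taylor expansion with the $j=3$ case of \eqref{eq3.9}, which gives the shift $3$ in (iv).

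One caveat concerns your fallback route to the gap: it does not work as stated. A counting function with $O(1)$ error only bounds the number of eigenvalues in windows of length $c$ and cannot separate consecutive eigenvalues, so the citation of \cite[Theorem 7.6]{DM} is genuinely needed.
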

\begin{proof}
Comparison with the harmonic oscillator gives the two-sided eigenvalue estimate. For the gap we use \cite[Theorem 7.6]{DM}. By \cite[Definitions 6.1 and 7.5]{DM}, the class $\mathcal P_1(K)$ consists of the functions $V:\R\to[0,\infty)$ with $V(0)=0$ and $V(-y)\le K V(y)$ for all $y\in\R$ that are positive and $C^1$ on $\R\setminus\{0\}$ and satisfy $K^{-1}V(y)\le yV'(y)\le K V(y)$ for $y\ne0$. For $V\in\mathcal P_1(K)$, let $E_1(V)<E_2(V)<\cdots$ be the eigenvalues of $-\partial_x^2+V$. The estimate in \cite[Theorem 7.6]{DM} is

\[
E_n(V)-E_m(V)\asymp_K(n-m)\frac{E_n(V)}{n},\qquad 1\le m\le n.
\]
Under $\xi=\sqrt\ee\,x$,
\[
\ee^{-1}\Ht_{\ee,t}\simeq-\partial_x^2+W(x),\qquad W(x)=\ee^{-1}U_{\sigma,t}(\sqrt\ee\,x).
\]
By \eqref{eq3.8}, $W(0)=0$, $cx^2\le W(x)\le Cx^2$ and $cx^2\le xW'(x)\le Cx^2$ for all $x$, with $c,C$ independent of $\ee$ and $t$. Hence $W\in\mathcal P_1(C/c)$. Since $e_n/\ee=E_{n+1}(W)$, the gap estimate with $n+2$ in place of $n$ and $m=n+1$ gives
\[
\frac{e_{n+1}-e_n}{\ee}\ge c\frac{e_{n+1}/\ee}{n+2}\ge c'>0,
\]
where the last step uses the lower bound $e_{n+1}\ge c\ee(n+2)$.

For $\Ht=-\ee^2\partial_\xi^2+U$, integration by parts gives
\[
\norm{\Ht v}_2^2=\ee^4\norm{v''}_2^2+\norm{Uv}_2^2
+2\ee^2\int U|v'|^2\dd \xi-\ee^2\int U''|v|^2\dd \xi.
\]
By \eqref{eq3.8}-\eqref{eq3.9}, this controls $\xi^2v$, $\ee \xi v'$ and $\ee^2v''$ by $(1+\Ht)v$, uniformly for $\ee\le\ee_0$. Put $D=\ee\partial_\xi$ and, for $s\in\N_0$, write
\[
N_s(v)=\sum_{r+j\le s}\norm{\xi^rD^jv}_2.
\]
The preceding identity, $U\asymp \xi^2$, and $\norm{U''}_\infty\le C$ give
\[
N_2(v)\le C\norm{(1+\Ht)v}_2.
\]
The words of degree one are controlled by the quadratic form identity $\norm{Dv}_2^2+\int U|v|^2=\ip{\Ht v}{v}$. Normal ordering with $[D,\xi]=\ee$ reduces higher words to second-order ones:
\[
N_{s+2}(v)\le C_s\sum_{r+j\le s}N_2(\xi^rD^jv).
\]
For $s=0$ this is immediate. For $s\ge1$, a word in normal order of degree $s+2$ with at least two factors $\xi$ is obtained from $\xi^2(\xi^rD^jv)$. The remaining words are obtained from $D^2(\xi^rD^jv)$, with commutator terms of degree at most $s$. Moreover,
\begin{align*}
[\Ht,\xi^rD^j]=-2\ee r\xi^{r-1}D^{j+1}-\ee^2r(r-1)\xi^{r-2}D^j\\
&-\sum_{\ell=1}^j\binom j\ell\ee^\ell \xi^rU^{(\ell)}D^{j-\ell}.
\end{align*}
Terms with zero coefficients are omitted. Since $|U'|\le C(1+|\xi|)$ and $U^{(\ell)}$ is bounded for $\ell\ge2$, we obtain
\[
\norm{[\Ht,\xi^rD^j]v}_2\le C_sN_s(v),\qquad r+j\le s.
\] The base estimate applied to each word establishes the recurrence
\[
N_{s+2}(v)\le C_s\{N_s((1+\Ht)v)+N_s(v)\}.
\]
Induction with $s=2m-2$ and $1+\Ht\ge1$ proves $N_{2m}(v)\le C_m\norm{(1+\Ht)^mv}_2$. For the reverse bound, Leibniz's rule implies $N_s((1+\Ht)v)\le C_sN_{s+2}(v)$, which can be iterated. Together these bounds prove \eqref{eq3.11}. All constants depend only on $m,\ee_0$ and finitely many uniform constants in \eqref{eq3.8}-\eqref{eq3.9}.

These estimates also identify the domains of the powers of $1+\Ht$. If $w,(1+\Ht)w\in L^2$ distributionally, local elliptic regularity and a real cutoff $\eta_R(\xi)=\eta(\xi/R)$ give
\[
\norm{D(\eta_Rw)}_2^2+\int(1+U)|\eta_Rw|^2\,\dd \xi
=\Re\ip{(1+\Ht)w}{\eta_R^2w}+\norm{(D\eta_R)w}_2^2.
\]
The right side is uniformly bounded. Letting $R\to\infty$ places $w$ in the form domain and then in the operator domain by the form representation theorem. Applying the base estimate to $\eta_Rw$ and using
\[
(1+\Ht)(\eta_Rw)=\eta_R(1+\Ht)w -2(D\eta_R)Dw-(D^2\eta_R)w
\]
extends that estimate to the maximal domain. The commutator formula above extends the recurrence inductively to every operator power: for each word $T$ occurring at the preceding order, $\Ht Tw=T\Ht w+[\Ht,T]w\in L^2$ distributionally, so the estimate on the maximal domain applies to $Tw$. Conversely, Leibniz's rule shows that $N_{2m}(v)<\infty$ implies $v\in D((1+\Ht)^m)$. Approximation by cutoff and mollification shows that $C_c^\infty(\R)$ is dense in these weighted Sobolev norms. In particular, $\Sch(\R)$ is a core for every power.

For $T$ in (iii), Leibniz's rule and \eqref{eq3.9} bound the weighted Sobolev norm of $Tv$ at order $2m$ by that of $v$ at order $2m+r+2+j$. The same argument applies to multiplication by $\partial_t^jU$, its finite $\xi$-derivatives and polynomials in $\xi$. Equation~\eqref{eq3.11} proves the stated graph bound. In a normal-ordered commutator, each kinetic term differentiates a coefficient and adds at most one factor $D$, while each potential term contains a derivative of $U$ and removes at least one factor $D$. Every term has a factor $\ee$. Induction bounds the total polynomial growth plus differential order after $N$ commutators by $r+2+j+N$. A final use of \eqref{eq3.11} on the spectral subspace proves the second bound in (iii). For each fixed finite range of the indices, these expansions use only finitely many derivatives in \eqref{eq3.9}. The graph bounds include \eqref{eq2.13}.

To justify parameter differentiation, we first work on the Schwartz core. For $|t|,|t'|\le t_0$, Taylor's formula in $t$ and \eqref{eq3.9} show that
\[
r_{t,t'}=U_{\sigma,t'}-U_{\sigma,t}-(t'-t)\partial_tU_{\sigma,t}
\quad\text{satisfies}\quad
|\partial_\xi^kr_{t,t'}(\xi)|\le C_k|t'-t|^2(1+|\xi|)^4.
\]
By \eqref{eq3.11} and Leibniz's rule, multiplication by $r_{t,t'}$ maps $D((1+\Ht)^{m+2})$ to $D((1+\Ht)^m)$ with norm at most $C_m|t'-t|^2$, which proves differentiability of $t\mapsto\Ht_{\ee,t}$ between these spaces. To obtain two continuous derivatives on a fixed scale, use $\mathcal H_{m+3}\to\mathcal H_m$ as in (iv). The bound \eqref{eq3.9} for $j=3$ controls all needed $\xi$-derivatives of the third Taylor coefficient by $C(1+|\xi|)^5$. Taylor's formula and (iii) bound the second-order remainder for $\partial_tU$ by $C_m|t'-t|^2$ in $\mathcal L(\mathcal H_{m+3},\mathcal H_m)$; the difference of the second derivatives has norm at most $C_m|t'-t|$. These bounds hold across $t=0$ and establish (iv). For every fixed $k\in\N_0$, the same argument gives $C^k$ dependence from $\mathcal H_{m+r}$ to $\mathcal H_m$ whenever the integer $r$ satisfies $2r\ge k+3$, using \eqref{eq3.9} through $k+1$ parameter derivatives. The resolvent identity on the core extends to the first and second resolvent derivatives between the fixed graph spaces. Integration against an almost analytic extension of a smooth spectral cutoff proves \eqref{eq2.17}-\eqref{eq2.18}. Corollary~\ref{Cor2.5} bounds these derivatives after multiplication on both sides by powers of $1+\Ht$. At $t=0$, these derivatives are taken between the graph spaces specified above. Derivatives in $\ee>0$ are justified in the same way, allowing finite powers of $\ee^{-1}$.
\end{proof}

For the kernel columns, we also need decay in the source point. Let $A_\ee=-\ee^2\partial_\xi^2+U(\xi)$, where $U$ is smooth and $U(\xi)\ge c\xi^2$.
\begin{lemma}\label{Lem3.4}
For $M>1/4$ and $0<\ee\le\ee_0$,
\[
\norm{(1+A_\ee)^{-M}\delta_q}_2\le C_{M,c,\ee_0}\ee^{-1/2}(1+q^2)^{-M+1/4}.
\]
The constant depends on $U$ only through its lower bound.
\end{lemma}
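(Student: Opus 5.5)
The plan is to estimate the norm by duality and pass through the quadratic form, using the spectral theorem together with a local heat-kernel or Sobolev-type bound. Write $\norm{(1+A_\ee)^{-M}\delta_q}_2^2=\ip{(1+A_\ee)^{-2M}\delta_q}{\delta_q}$, which is the diagonal value at $(q,q)$ of the kernel of $(1+A_\ee)^{-2M}$. Using the subordination formula
\[
(1+A_\ee)^{-2M}=\frac{1}{\Gamma(2M)}\int_0^\infty t^{2M-1}e^{-t}e^{-tA_\ee}\dd t,
\]
the task reduces to a pointwise bound for the heat kernel on the diagonal, $k_{e^{-tA_\ee}}(q,q)$. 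The Feynman--Kac representation (equivalently, Trotter plus the pointwise domination used for the heat trace in Lemma~\ref{Lem3.1}) gives
\[
k_{e^{-tA_\ee}}(q,q)=(4\pi\ee^2t)^{-1/2}\,\mathbb E\Bigl[\exp\Bigl(-\int_0^tU(q+\ee\sqrt2\,b_s)\dd s\Bigr)\Bigr],
\]
with $b$ a Brownian bridge from $0$ to $0$ on $[0,t]$. Since $U$ enters only through the lower bound $U\ge c\xi^2$, we may replace $U$ by $c\xi^2$. This gives the monotone comparison that makes the constant depend only on $c$.

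Next I bound the bridge expectation. Here I plan to avoid the explicit Mehler kernel and only use two regimes. For $t\lesssim(1+q^2)^{-1}$ I use the trivial bound $1$. For larger $t$, I split according to whether the bridge stays within $|q|/2$ of its start. If it does, the exponent is at most $-ctq^2/4$. If it does not, its probability is at most $2\exp(-q^2/(C\ee^2t))$. Hence the expectation is at most $e^{-ctq^2/4}+2e^{-q^2/(C\ee^2 t)}$. The second term is harmless for $t\le1$ when $\ee\le\ee_0$. For $t\ge1$ the factor $e^{-t}$ already dominates, so I should bound the second term there using $e^{-t}$ rather than $q$-decay; equivalently, I cap the $t$-integration effectively at $t\lesssim \log(2+q^2)$, where $e^{-q^2/(C\ee_0^2 t)}$ is still polynomially small of every order in $q$.

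The computation then is
\[
\int_0^\infty t^{2M-1}e^{-t}t^{-1/2}\bigl(e^{-ctq^2/4}+\ldots\bigr)\dd t\lesssim (1+q^2)^{-2M+1/2}.
\]
This integral converges at $0$ precisely because $2M-1/2>0$, that is, $M>1/4$. Multiplying by $\ee^{-1}$ from the prefactor and taking square roots gives $C\ee^{-1/2}(1+q^2)^{-M+1/4}$.

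The step needing most care is the large-deviation/bridge estimate with uniform constants in $\ee\le\ee_0$ and the small-$q$ case. For $|q|\le1$ I would simply use the trivial bound, giving $C\ee^{-1/2}$ via $\int t^{2M-3/2}e^{-t}<\infty$. If one prefers an analytic route, an alternative is to prove the pointwise bound $e^{-tA_\ee}(q,q)\le (4\pi\ee^2t)^{-1/2}e^{-ct q^2/C}$ for $t\le 1$ by the Davies--Gaffney/Agmon argument. The probabilistic route seems cleanest, and rigorous justification of Feynman--Kac for smooth $U\ge0$ is standard (Simon \cite{SimFI}).
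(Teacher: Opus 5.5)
Your argument is correct and has the same skeleton as the paper's proof: the Laplace (subordination) formula for $(1+A_\ee)^{-2M}$, domination of the diagonal heat kernel by the one with potential $c\xi^2$, and integration against $t^{2M-1}e^{-t}$. In both, $M>1/4$ is exactly what makes $t^{2M-3/2}$ integrable at $0$.

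The difference is how you get decay in $q$ of the diagonal heat kernel.
\begin{itemize}
\item The paper passes, via Trotter and positivity, to $-\ee^2\partial_\xi^2+c\xi^2$ and uses Mehler's explicit diagonal. With $\sinh x\ge x$ and $\ee^{-1}\tanh(\omega\ee v)\ge c_1\min(v,1)$, this gives the single bound $C\ee^{-1}v^{-1/2}e^{-c_1q^2\min(v,1)}$, so the final integration is one line.
\item You bound the Brownian-bridge expectation by $e^{-ctq^2/4}+2e^{-q^2/(C\ee^2t)}$. This needs no closed formula and would adapt to other lower bounds on $U$. The price is an extra exit-probability term that must be integrated separately.
\end{itemize}

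That extra term is harmless, and there is a simpler way to handle it than your cutoff at $t\lesssim\log(2+q^2)$. Set $a=q^2/(C\ee_0^2)$ and use $t/2+a/t\ge\sqrt{2a}$. Then $\int_0^\infty t^{2M-3/2}e^{-t-a/t}\,dt\le e^{-\sqrt{2a}}\int_0^\infty t^{2M-3/2}e^{-t/2}\,dt$. So the exit term contributes $O(\ee^{-1}e^{-c|q|/\ee_0})$ to the squared norm, which is far below $\ee^{-1}(1+q^2)^{-2M+1/2}$. Both routes use $\ee\le\ee_0$ at the corresponding step: the paper in the $\tanh$ lower bound, you in making the exit term uniform.

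One small omission: identifying $\norm{(1+A_\ee)^{-M}\delta_q}_2^2$ with the diagonal value of the kernel of $(1+A_\ee)^{-2M}$ needs a word of justification. The paper handles this by approximating $\delta_q$ by smooth functions, and the same remark suffices for you.
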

\begin{proof}
By the Trotter product formula and positivity, the heat kernel of $A_\ee$ is bounded by that of $A_\ee^0=-\ee^2\partial_\xi^2+c\xi^2$. With $\omega=\sqrt c$, the latter heat kernel has diagonal
\[
k_{e^{-vA_\ee^0}}(q,q)=\left(\frac{\omega}{2\pi\ee\sinh(2\omega\ee v)}\right)^{1/2}
\exp\left(-\frac\omega\ee q^2\tanh(\omega\ee v)\right).
\]
This identity follows, for instance, by substituting a Gaussian kernel into the heat equation and its initial condition. Since $\sinh x\ge x$ and $\ee^{-1}\tanh(\omega\ee v)\ge c_1\min(v,1)$ uniformly for $\ee\le\ee_0$,
\[
0\le k_{e^{-vA_\ee}}(q,q)\le C\ee^{-1}v^{-1/2}e^{-c_1q^2\min(v,1)}.
\]
The Laplace formula for $(1+A_\ee)^{-2M}$ yields
\[
\norm{(1+A_\ee)^{-M}\delta_q}_2^2
=\frac1{\Gamma(2M)}\int_0^\infty v^{2M-1}e^{-v}k_{e^{-vA_\ee}}(q,q)\dd v
\le C_M\ee^{-1}(1+q^2)^{-2M+1/2}.
\]
The condition $M>1/4$ is the condition for integrability of this bound near $v=0$. The distributional formula follows by approximating the delta function by smooth functions.
\end{proof}

The following formulation allows the affine coordinates to depend on the Fourier parameters. Consider a family on the fixed space $L^2(\R_u)$,
\begin{equation}\label{eq3.12}
\Pt_{h,\theta}=-h^2\partial_u^2+
U_{\sigma,t(\theta)}(q(\theta)+\varrho(\theta)u),\qquad c\le \varrho\le C,
\end{equation}
where the needed derivatives in $a,b$ of $\varrho,t,q$ are polynomially bounded in $\br\eta$, $|t|\le t_0$, and $|q|\asymp|\eta|$ for large $|\eta|$. Here $\partial_{a,b}^k$ denotes any partial derivative of total order $k$ in $a,b$.
\begin{corollary}\label{Cor3.5}
For every $N$, every $k\in\{0,1,2\}$, and $0<h\le1$,
\begin{equation}\label{eq3.13}
\norm{\partial_{a,b}^k[F(\tau^2\Pt_{h,\theta})\delta_0]}_{L^2_\tau L^2_u}
\le C_Nh^{-1/2}\br\eta^{-N}\norm{F}_{\Hs{k}}.
\end{equation}
\end{corollary}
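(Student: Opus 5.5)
The plan is to unfold the affine change of variables, apply the averaged functional calculus of Lemma~\ref{Lem2.4} to the comparison family $\Ht_{\ee,t}$, and extract the decay in $\br\eta$ from Lemma~\ref{Lem3.4}.

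\emph{Step 1: conjugation.} Let $(T_\theta g)(u)=\varrho^{1/2}g(q+\varrho u)$. This is a unitary map on $L^2(\R)$, and it conjugates $\Pt_{h,\theta}$ to $\Ht_{\ee,t}$ with $\ee=h/\varrho$, since $-h^2\partial_u^2$ becomes $-(h/\varrho)^2\partial_\xi^2$. Consequently
\[
F(\tau^2\Pt_{h,\theta})\delta_0=\varrho^{1/2}\,T_\theta\bigl[F(\tau^2\Ht_{\ee,t})\delta_q\bigr].
\]
So the column at source $0$ equals, up to a unitary, the column of $\Ht_{\ee,t}$ at source $q(\theta)$.

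\emph{Step 2: derivatives.} Differentiate in $a,b$ by the chain rule. Derivatives falling on $\varrho^{1/2}$ or on the dilation part of $T_\theta$ produce factors $\varrho'$ times $(\xi-q)\partial_\xi$. Derivatives of $q$ produce $\partial_\xi$ acting on the column, which is a translation derivative of $\delta_q$. Derivatives of $t$ and $\ee$ produce the parameter derivatives of $F(\tau^2\Ht_{\ee,t})$. All coefficients are polynomially bounded in $\br\eta$ by hypothesis.

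Each term then has the form
\[
(\text{polynomial in }\xi,\ \ee\partial_\xi)\,\bigl(\partial_t^i\partial_\ee^j f_\tau(\Ht)\bigr)\,\partial_q^l\delta_q,
\]
with $i+j+l\le2$, where $f_\tau(E)=F(\tau^2E)$. Factors of $\partial_\xi=\ee^{-1}D$ lose powers of $h^{-1}$. To avoid this, I would write $\partial_q\delta_q=-\partial_\xi\delta_q$ and commute the derivative through $f_\tau(\Ht)$. The commutator $[\partial_\xi,\Ht]=U'$ is again a multiplication operator, covered by Lemma~\ref{Lem3.3}(iii). This leaves $f_\tau(\Ht)$ composed with $\partial_\xi$, which we then absorb as explained in Step 3.

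\emph{Step 3: the $F$-dependent factor.} Insert \eqref{eq2.21}: write $f_\tau(\Ht)=f_\tau(\Ht)\psi_\tau(\Ht)$ and place all parameter derivatives according to Leibniz's rule. Apply Lemma~\ref{Lem2.4} to the factor $\partial^kf_\tau(\Ht)$. Its hypothesis (H) is verified by Lemma~\ref{Lem3.3}: the eigenvalue gaps $e_{n+1}-e_n\ge c\ee$ give at most $C$ eigenvalues in each $h$-interval, so $\Lambda(h)$ may be replaced by $1$. The trace bound $\Tr(1+\Ht)^{-M_0}\le C\ee^{-1}$ follows from \eqref{eq3.10}, the commutator bounds come from (iii), and the $C^2$ property in $t$ comes from (iv). The constants are uniform in $t$ and $\ee$. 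This yields the factor $\norm F_{H^k}$ times $\norm{\mathbf v}_{H^1_\tau L^2}$, where $\mathbf v$ is of the form $(\partial^{k'}\psi_\tau(\Ht))$ applied to a derivative of $\delta_q$, with polynomial weights.

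Corollary~\ref{Cor2.5} bounds $(1+\Ht)^M\partial^{k'}\psi_\tau(\Ht)(1+\Ht)^M$ uniformly, including its $\tau$-derivatives. By \eqref{eq3.11}, the polynomial weights and the factors $\ee\partial_\xi$ are absorbed into $(1+\Ht)^M$. The remaining factor $\partial_\xi$ satisfies $\norm{(1+\Ht)^{-M}\partial_\xi}\lesssim\ee^{-1}$, which costs a power of $h^{-1}$. This is harmless only if we absorb it differently, for example by writing $\partial_\xi\delta_q$ through $q$-translation and estimating $(1+\Ht)^{-M}\delta_q$ as a smooth function of $q$. I expect this to be the main obstacle. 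The first thing I would try is to note that $q$-derivatives of $(1+\Ht)^{-M}\delta_q$ equal $-(1+\Ht)^{-M}\partial_\xi\delta_q$, and that commuting $\partial_\xi$ to the left costs only $U'$ terms. This reduces everything to $\norm{(1+\Ht)^{-M}\delta_q}_2$ and its $\xi$-weighted variants, with the $\ee$-loss cancelled by pairing each $\partial_\xi$ with $\ee$ from $\partial_q=\varrho^{-1}\ldots$ scaling. If that fails, the $\br\eta^{-N}$ decay absorbs any polynomial loss in $h$ only when $h\gtrsim\br\eta^{-1}$, and a separate argument would be needed for small $h$.

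\emph{Step 4: decay.} Finally, Lemma~\ref{Lem3.4} with $M$ large gives
\[
\norm{(1+\Ht)^{-M}\delta_q}_2\lesssim\ee^{-1/2}\br q^{-2M+1/2}.
\]
Here $\ee^{-1/2}\asymp h^{-1/2}$ and $\br q\asymp\br\eta$. This decay dominates the polynomial weights $\xi^r$ and the polynomial bounds on the coefficient derivatives, since $\xi^r(1+\Ht)^{-r}$ is bounded. Choosing $M\gg N$ yields \eqref{eq3.13}.
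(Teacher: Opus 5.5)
Your plan has a genuine gap, and it is the point you flag yourself in Step 3. You conjugate with the $\theta$-dependent unitary $T_\theta$ \emph{before} differentiating, so the conjugation, the source $\delta_q$ and the semiclassical parameter $\ee$ all depend on $(a,b)$. Differentiating $T_\theta$ produces the unbounded operator $\frac{\varrho_a}{2\varrho}+\bigl(q_a+\frac{\varrho_a}{\varrho}(\xi-q)\bigr)\partial_\xi$, and moving the source produces $-q_af_\tau(\Ht)\partial_\xi\delta_q$. Taken one at a time, these terms are genuinely larger than the column by a factor $\ee^{-1}\asymp h^{-1}$, because the column oscillates on scale $\ee$. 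Bounding them through $\norm{(1+\Ht)^{-M}\partial_\xi}\lesssim\ee^{-1}$ therefore gives $h^{-1/2-k}$ instead of $h^{-1/2}$, and $\br\eta^{-N}$ cannot compensate as $h\to0$.

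Your suggested repair does not work. No factor $\ee$ comes from $\partial_q$ to pair with $\partial_\xi$, since $q=q(\eta,a,b)$ does not involve $h$. What rescues the estimate is an exact cancellation that your plan does not identify:
\begin{itemize}
\item The two $q_a$-terms combine to $q_a[\partial_\xi,f_\tau(\Ht)]\delta_q$.
\item Using $(\xi-q)\partial_\xi\delta_q=-\delta_q$, the dilation term equals $\frac{\varrho_a}{\varrho}\bigl([(\xi-q)\partial_\xi,f_\tau(\Ht)]\delta_q-f_\tau(\Ht)\delta_q\bigr)$. Its second part cancels the derivative of the prefactor $\varrho$.
\item Since $\partial_a\ee=h\varrho_a=\frac{\varrho_a}{\varrho}\ee$, the commutator part combines with the $\ee$-derivative. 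The result is a functional-calculus derivative along $[(\xi-q)\partial_\xi,\Ht]+\ee\partial_\ee\Ht=(\xi-q)U_{\sigma,t}'$.
\end{itemize}
After these cancellations, every term is a derivative of $f_\tau(\Ht)$ along a multiplication operator, and no bare $\partial_\xi$ survives. Along the way, $\partial_\ee\Ht=-2\ee^{-1}(\ee\partial_\xi)^2$ alone does not satisfy (H4) uniformly in $\ee$. It is the combination $h\varrho_a\partial_\ee$ that is harmless.

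The paper avoids all of this by differentiating \eqref{eq3.12} in $(a,b)$ with $u$ fixed, and using the affine change only at a fixed parameter. In the $u$-frame, $-h^2\partial_u^2$ and $\delta_0$ do not depend on the parameters. The derivatives $\partial_{a,b}\Pt_{h,\theta}$ are exactly the multiplication operators \eqref{eq3.14}, for instance $t_a\partial_tU_{\sigma,t}+(q_a+\varrho_au)\partial_\xi U_{\sigma,t}$ with $u=(\xi-q)/\varrho$. Hypothesis (H) is then checked for $\Pt_{h,\theta}$ itself:
\begin{itemize}
\item graph norms via \eqref{eq3.11}, transported by the affine map;
\item at most $C$ eigenvalues per $h$-interval, from \eqref{eq3.10};
\item the commutator gain, from $[\Pt,f]=-h^2(f''+2f'\partial_u)$.
\end{itemize}
Lemma~\ref{Lem2.4}, without $\Lambda(h)$, is applied to \eqref{eq2.21} with $\mathbf v=\partial^{k'}\psi_\tau(\Pt)\delta_0$, and this vector is controlled by \eqref{eq3.15} and \eqref{eq3.16}.

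Before choosing $M\gg N$, you also need the exponent of the $\br\eta$-loss in \eqref{eq3.15} to be independent of $M$. This holds because each term of \eqref{eq2.17}--\eqref{eq2.18} carries at most two coefficients $c_\iota(\theta)$, and these do not depend on $M$. Finally, a minor slip: $\xi=q+\varrho u$ gives $\partial_u=\varrho\partial_\xi$, so $\ee=h\varrho$, not $h/\varrho$. This is harmless, since $\varrho\asymp1$.
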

\begin{proof}
At a fixed parameter, the unitary affine change $\xi=q+\varrho u$ takes $\Pt$ to $\Ht_{h\varrho,t}$. Differentiate \eqref{eq3.12} with $u$ fixed, before making the affine change of variables. The parameter derivatives of $\Pt$ become finite sums of
\begin{equation}\label{eq3.14}
c_{rj\ell}(\theta)\xi^r\partial_t^j\partial_\xi^\ell U_{\sigma,t}(\xi),
\end{equation}
with finitely many indices and polynomially bounded scalar coefficients. The derivatives in \eqref{eq3.14} for parameter orders at most two have $r\le2$ and $j+\ell\le2$; the third derivatives needed for Taylor remainders have $r\le3$ and $j+\ell\le3$. We check Hypothesis (H) of Section~\ref{Sec2} for $\Pt_{h,\theta}$, with $h>0$ fixed. Put $\ee=h\varrho\asymp h$. Under the affine change, $h\partial_u$ corresponds to $\ee\partial_\xi$ and $(1+\Pt_{h,\theta})^m$ to $(1+\Ht_{\ee,t})^m$.

\emph{(H1).} By \eqref{eq3.11}, $\|(1+\Pt_{h,\theta})^mv\|_2\asymp_m\sum_{r+j\le2m}\|(q+\varrho u)^r(h\partial_u)^jv\|_2$, locally uniformly in $\theta$. The polynomials $(q+\varrho u)^r$ and $u^r$ span the same spaces, with coefficients depending continuously on $\theta$. The right side is thus equivalent to $\sum_{r+j\le2m}\|u^r(h\partial_u)^jv\|_2$, which does not depend on $\theta$. By Lemma~\ref{Lem3.3}, $\Sch(\R)$ is a core for every power of $1+\Pt_{h,\theta}$.

\emph{(H2).} The potential $u\mapsto U_{\sigma,t(\theta)}(q(\theta)+\varrho(\theta)u)$ is smooth in $(\theta,u)$. Its $\theta$-derivatives of order at most three, together with their $u$-derivatives, are bounded by $C(1+|u|)^{C'}$, locally uniformly in $\theta$, by the formula for $U_{\sigma,t}$ and the bounds on the derivatives of $\varrho,t,q$. By \eqref{eq3.11} and Leibniz's rule, multiplication by such a function maps $\mathcal H_{m+r}$ to $\mathcal H_m$ for $r$ large. Taylor's formula in $\theta$ verifies (H2).

\emph{(H3).} By \eqref{eq3.10}, $e_n\ge c\ee(n+1)$, so $\Tr(1+\Pt)^{-2}\le C\ee^{-1}\le Ch^{-1}$. Consecutive eigenvalues differ by at least $c\ee\ge c'h$, so every interval of length $h$ contains at most $C$ eigenvalues. Thus (H3) holds with $\Lambda(h)$ replaced by $1$.

\emph{(H4).} The derivatives of $\Pt$ in $\theta$ are the multiplication operators in \eqref{eq3.14}. For them, \eqref{eq2.13} holds by \eqref{eq3.11} and Leibniz's rule, with a shift $r_A$ that depends only on the polynomial degree of the weight. Moreover $[\Pt,f]=-h^2(f''+2f'\partial_u)$ for a multiplication operator $f$, so each commutator gains a factor $h$ on spectral subspaces of bounded energy. This is \eqref{eq2.14}; see Lemma~\ref{Lem3.3}.

All constants are polynomial in $\br\eta$ through the coefficients $c_{rj\ell}(\theta)$. There is an exponent $m_0$, independent of $M$, such that for a smooth cutoff $\psi_\tau$,
\begin{equation}\label{eq3.15}
\norm{(\partial^k\psi_\tau(\Pt))(1+\Pt)^M}_{2\to2}
\le C_{M,k}\br\eta^{m_0},
\qquad k\in\{0,1,2\}.
\end{equation}
By \eqref{eq3.14}, each derivative of $\Pt$ has the form $\sum_\iota c_\iota(\theta)T_\iota$. Here the $T_\iota$ belong to a finite set of multiplication operators of the form $\xi^r\partial_t^j\partial_\xi^\ell U_{\sigma,t}(\xi)$, and $|c_\iota(\theta)|\le C\br\eta^{m_1}$. The operators $T_\iota$ depend only on $t$ and satisfy the bounds of Lemma~\ref{Lem3.3} uniformly. Their constants depend on $M$ and on the number of commutators, but the coefficients $c_\iota$ do not. Every term in \eqref{eq2.17}-\eqref{eq2.18} contains at most two such derivatives, so \eqref{eq3.15} holds with $m_0=2m_1$. The same reasoning applies after one $\tau$-derivative of $\psi_\tau$.

The kernel acquires a factor $\varrho$, while the change from $u$ to $\xi$ contributes $\varrho^{-1/2}$ to its $L^2$ norm. The total factor is therefore $\varrho^{1/2}$. Lemma~\ref{Lem3.4} then implies
\begin{equation}\label{eq3.16}
\norm{(1+\Pt)^{-M}\delta_0}_{L^2_u}
=\varrho^{1/2}\norm{(1+\Ht_{h\varrho,t})^{-M}\delta_q}_{L^2_\xi}
\le C_Mh^{-1/2}(1+q^2)^{-M+1/4}.
\end{equation}
Combine \eqref{eq3.15} and \eqref{eq3.16}. Since $m_0$ is independent of $M$, choose $M$ large enough to obtain
\[
\norm{\partial^k\psi_\tau(\Pt)\delta_0}_{H^1_\tau L^2_u}
\le C_Nh^{-1/2}\br\eta^{-N},\qquad k\in\{0,1,2\}.
\]
Differentiate \eqref{eq2.21} with $\psi=1$ near $J$ and apply Lemma~\ref{Lem2.4}, now without $\Lambda(h)$. The remaining polynomial losses in $\br\eta$ are absorbed by increasing $N$. The resulting estimate is \eqref{eq3.13}.
\end{proof}

\section{Comparison with an escaping well}\label{Sec4}
To transfer the estimates of Corollary~\ref{Cor3.5} to the quartic family, we compare $\Ht_{\ee,t}$ with the full operator in the normalized coordinates,
\[
H_{\ee,t}=-\ee^2\partial_\xi^2+V_{\sigma,t}(\xi)^2,
\qquad V_{\sigma,t}(\xi)=\sigma \xi+t\xi^2/2.
\]
For $t\ne0$, the reflection
\[
S_t\xi=-2\sigma/t-\xi
\]
interchanges the two wells and leaves $V_{\sigma,t}$ invariant; see Figure~\ref{Fig1}. We use the same notation for its unitary action on functions.
\begin{figure}[htbp]
\centering
\begin{tikzpicture}[xscale=1.35,yscale=1.4]
  \clip (-6.2,-0.6) rectangle (2.2,2.75);
  \fill[gray!20] (-0.8,0) rectangle (0.8,2.2);
  \fill[gray!20] (-4.8,0) rectangle (-3.2,2.2);
  \draw[->] (-5.6,0) -- (1.7,0) node[right] {$\xi$};
  \draw[thick,domain=-5.25:1.25,samples=120] plot (\x,{(\x+0.25*\x*\x)^2});
  \draw[thick,dashed] plot coordinates {(-2.200,1.885) (-2.150,1.717) (-2.100,1.571)
    (-2.050,1.445) (-2.000,1.337) (-1.950,1.245) (-1.900,1.168) (-1.850,1.104) (-1.800,1.050)
    (-1.750,1.007) (-1.700,0.971) (-1.650,0.943) (-1.600,0.922) (-1.550,0.901) (-1.500,0.879)
    (-1.450,0.854) (-1.400,0.828) (-1.350,0.800) (-1.300,0.770) (-1.250,0.739) (-1.200,0.706)
    (-1.150,0.671) (-1.100,0.636) (-1.050,0.600) (-1.000,0.563) (-0.950,0.525) (-0.900,0.487)
    (-0.850,0.448) (-0.800,0.410) (-0.750,0.371) (-0.700,0.334) (-0.650,0.296) (-0.600,0.260)
    (-0.550,0.225) (-0.500,0.191) (-0.450,0.160) (-0.400,0.130) (-0.350,0.102) (-0.300,0.077)
    (-0.250,0.055) (-0.200,0.036) (-0.150,0.021) (-0.100,0.010) (-0.050,0.002) (-0.000,0.000)
    (0.050,0.003) (0.100,0.011) (0.150,0.024) (0.200,0.044) (0.250,0.071) (0.300,0.104)
    (0.350,0.145) (0.400,0.194) (0.450,0.251) (0.500,0.316) (0.550,0.391) (0.600,0.476)
    (0.650,0.571) (0.700,0.677) (0.750,0.793) (0.800,0.922) (0.850,1.062) (0.900,1.216)
    (0.950,1.382) (1.000,1.562) (1.050,1.757) (1.100,1.967)};
  \draw (0,0.05) -- (0,-0.05) node[below] {$0$};
  \draw (-4,0.05) -- (-4,-0.05) node[below] {$-2\sigma/t$};
  \draw (-2,0.05) -- (-2,-0.05) node[below] {$-\sigma/t$};
  \draw[<->] (-3.0,2.05) to[bend left=12] node[above] {$S_t$} (-1.0,2.05);
  \node[above] at (0,2.2) {$\operatorname{supp}\ell_t$};
  \node[above] at (-4,2.2) {$\operatorname{supp}(\ell_t\circ S_t)$};
\end{tikzpicture}
\caption{The potential $V_{\sigma,t}^2$ (solid) and the comparison potential $U_{\sigma,t}$ (dashed) for $\sigma=1$ and small $t>0$, not to scale. The two potentials agree on the shaded strip around $0$, and $U_{\sigma,t}$ has a single well. The shaded strips are the supports of the cutoffs $\ell_t$ and $\ell_t\circ S_t$ introduced in Step~2 of the proof of Lemma~\ref{Lem4.1}. The reflection $S_t$ interchanges the two wells and the two shaded strips.} \label{Fig1}
\end{figure}
The comparison potential $U_{\sigma,t}$ and the full potential $V_{\sigma,t}^2$ agree when $|t\xi|\le2d_0$.

Fix $J\Subset(0,\infty)$ and $0<\ee_0<\infty$. For $F\in C_c^\infty(J)$, set
\[
\Xi_\tau(\xi,q)=k_{F(\tau^2H_{\ee,t})}(\xi,q)-k_{F(\tau^2\Ht_{\ee,t})}(\xi,q).
\]
\begin{lemma}\label{Lem4.1}
Fix $0<\kappa<1$ and a finite set of indices $(k,m,r,i,j)\in\N_0^5$ with $i+j\le2$. There are $t_0>0$, $c_\kappa>0$, $M<\infty$ and $C$ such that, for $0<\ee\le\ee_0$, $0<|t|\le t_0$, $|tq|\le d_0/2$, and each of these indices,
\begin{equation}\label{eq4.1}
\norm{\xi^k\partial_\xi^m\partial_q^r
\partial_\ee^i\partial_t^j \Xi_\tau(\cdot,q)}_{L^2_\tau L^2_\xi}
\le C\ee^{-M}|t|^{-M}
e^{-c_\kappa/(\ee t^2)}\norm{F}_{\Hs{i+j+\kappa}}.
\end{equation}
The constants depend on $J$, $\ee_0$, $\kappa$, the chosen indices and the cutoffs, but not on $\ee$, $t$ or $q$.
\end{lemma}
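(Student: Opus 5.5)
The plan is to compare the two kernel columns at a source point $q$ near the bounded well, where the potentials $V_{\sigma,t}^2$ and $U_{\sigma,t}$ coincide, and to show that the difference is governed by tunnelling through the barrier of height $\asymp t^{-2}$ near $\xi=-\sigma/t$. The tool is a spectral-pair decomposition using the reflection $S_t$, as in Figure~\ref{Fig1}.

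\textbf{Step 1 (Agmon bounds).} Below a fixed energy $E_*>\sup J$, eigenfunctions of $\Ht_{\ee,t}$ decay like $e^{-d(\xi)/\ee}$ in the Agmon metric $\sqrt{(U-E_*)_+}\,d\xi$. By \eqref{eq3.8}, the Agmon distance from the well at $0$ to the region $|t\xi|\ge 2d_0$ is $\gtrsim t^{-2}$, which gives factors $e^{-c/(\ee t^2)}$. I would prove these bounds for $(1+\Ht)^{-M}$, for spectral projections, and for $\xi$-weighted versions, with polynomial losses in $\ee^{-1}$ and $|t|^{-1}$. The standard references are \cite{Agm,HS,Sim}.

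\textbf{Step 2 (quasimodes for $H_{\ee,t}$).} Choose a cutoff $\ell_t$ equal to one on $|t\xi|\le d_0$ and supported in $|t\xi|\le 2d_0$, with $\ell_t$ and $\ell_t\circ S_t$ as in the figure. Let $\varphi_n$ be the eigenfunctions of $\Ht$ with eigenvalues $e_n\le E_*$. Then $\ell_t\varphi_n$ and $S_t(\ell_t\varphi_n)$ are quasimodes of $H_{\ee,t}$, with errors $O(e^{-c/(\ee t^2)})$ coming from the commutator $[\,-\ee^2\partial_\xi^2,\ell_t]$, which is supported in the forbidden region.

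By \eqref{eq3.10}, the values $e_n$ are separated by $\ge c\ee$. Projection arguments (Helffer--Sj\"ostrand) then show that the spectrum of $H_{\ee,t}$ below $E_*$ consists of pairs exponentially close to the $e_n$. Each pair spans a rank-two space $\mathcal E_n$ that is exponentially close to $\operatorname{span}\{\ell_t\varphi_n,S_t\ell_t\varphi_n\}$. Since $S_t$ commutes with $H$, passing to the even/odd frame, which is a fixed matrix, diagonalises $H|_{\mathcal E_n}$.

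\textbf{Step 3 (the kernel difference).} Write
\[
F(\tau^2H)\delta_q=\sum_n F(\tau^2 H)\Pi_n\delta_q ,
\]
where $\Pi_n$ is the spectral projection onto $\mathcal E_n$; energies above $E_*$ are killed by the support of $F$. For $|tq|\le d_0/2$, the vector $\Pi_n\delta_q$ is exponentially close to $\ell_t\varphi_n\,\overline{\varphi_n(q)}$ plus a reflected part. The reflected part is exponentially small at $q$ because $S_tq$ lies in the other well.

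Comparing with $F(\tau^2\Ht)\delta_q=\sum_nF(\tau^2e_n)\varphi_n\overline{\varphi_n(q)}$, the difference has two sources:
\begin{itemize}
\item eigenvector errors, which are exponentially small;
\item eigenvalue errors $F(\tau^2E_n^\pm)-F(\tau^2e_n)$, which are controlled by \eqref{eq2.11}. This costs $H^\kappa$ and gives $|E_n^\pm-e_n|^\kappa$, still exponentially small. This is the origin of $c_\kappa$.
\end{itemize}
The weights $\xi^k\partial_\xi^m$ and the derivative $\partial_q^r$ cost powers of $E_*$ and $\ee^{-1}$ via \eqref{eq3.11}.

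\textbf{Step 4 (parameter derivatives).} Differentiate in $\ee$ and $t$ up to order two using \eqref{eq2.17}--\eqref{eq2.18}. The key point is to treat the pair $\mathcal E_n$ as a block. The divided differences are only taken across distinct pairs, whose separation is $\ge c\ee$ by \eqref{eq3.10}, so there are only polynomial losses in $\ee^{-1}$. Within a pair, the fixed reflection frame avoids dividing by the tiny splitting.

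The derivative of the difference is then written as a difference of derivatives, each estimated by Lemma~\ref{Lem2.3} in $L^2_\tau$ with $\norm F_{H^{i+j+\kappa}}$. The $t$-derivatives of $S_t$, $\ell_t$ and $U_{\sigma,t}$ give at most powers of $|t|^{-1}$. Lemma~\ref{Lem3.3}(iv) justifies the derivatives at the comparison side.

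\textbf{Main obstacle.} I expect the main obstacle to be Step 4: controlling the second parameter derivatives of the pair projections $\Pi_n$ and of the eigenvalues uniformly in $n\le C/\ee$. The intra-pair splitting is exponentially small, so naive perturbation theory blows up. I would try the Riesz projection $\Pi_n=\frac{1}{2\pi i}\oint(z-H)^{-1}dz$ over a circle of radius $c\ee/2$ around $e_n$, and differentiate under the integral. The resolvent on the circle is $O(\ee^{-1})$ by the pair separation, which gives only polynomial losses $\ee^{-M}$. The comparison errors then come from the resolvent identity with the exponentially small quasimode residuals.
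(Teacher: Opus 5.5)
Your proposal follows essentially the same route as the paper. The shared ingredients are:
\begin{itemize}
\item Agmon estimates;
\item the quasimodes $\ell_t\varphi_n$ and $S_t\ell_t\varphi_n$;
\item rank-two clusters in a reflection-adapted frame, diagonalized by a fixed matrix so that the tiny splitting is never inverted;
\item Riesz projections over circles of radius $\asymp\ee$ for the parameter derivatives;
\item \eqref{eq2.11} for the eigenvalue shifts, which produces $c_\kappa$.
\end{itemize}

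The one point you pass over is that the exponential decay must also hold for the parameter derivatives $\partial_\ee^i\partial_t^j\varphi_n$. The paper proves this in Claim~\ref{Cl4.2}, using the differentiated eigenvalue equation \eqref{eq4.7} and Agmon weights with successively smaller coefficients. This step is needed because Lemma~\ref{Lem3.3}(iv) alone gives only polynomial bounds, which would not make the differentiated quasimode residuals exponentially small.
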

\begin{proof}
Write $H=H_{\ee,t}$ and $\Ht=\Ht_{\ee,t}$. Choose $E_*>\sup J$ independently of $\ee,t$ and perform all auxiliary spectral constructions up to energy $4E_*$. After choosing $R_0$ in the weighted estimates below, take $t_0$ so small that $d_0/(2t_0)>2R_0$. The supports of derivatives of the spatial cutoffs then lie in the forbidden region for these energies. The choice of $t_0$ may depend on the chosen indices. The constants $C$ and exponents $M$ may change from one estimate to the next.

\paragraph*{Step 1: Agmon estimates and parameter derivatives} We use the weighted energy method of \cite{Agm} with constants uniform in $\ee,t$ below the fixed energy cutoff.

Put
\[
d_t(\xi)=\min\{|\xi|,|\xi+2\sigma/t|\}.
\]
The roots have separation $2/|t|$. Factoring $V$ shows
\begin{equation}\label{eq4.2}
V_{\sigma,t}(\xi)^2\ge d_t(\xi)^2/4.
\end{equation}
Neumann bracketing at the midpoint $-\sigma/t$ reduces each half-line to a harmonic lower bound centered at its own root. The count for a half-line harmonic oscillator is bounded by the count on the whole line plus one: imposing a Dirichlet condition at the splitting point has codimension one, and Dirichlet-Neumann interlacing bounds the difference by one. Consequently,
\begin{equation}\label{eq4.3}
\operatorname{rank}\ind_{[0,4E_*]}(H_{\ee,t})\le C\ee^{-1}.
\end{equation}
The same bound for $\Ht$ follows from \eqref{eq3.10}.

We use the following weighted energy identity for a real potential $\mathcal V$. If $(-\ee^2\partial_\xi^2+\mathcal V-E)v=f$ and $\Phi$ is Lipschitz, then
\begin{equation}\label{eq4.4}
  \ee^2\norm{(e^{\Phi/\ee}v)'}_2^2
  +\int(\mathcal V-E-|\Phi'|^2)e^{2\Phi/\ee}|v|^2\dd \xi
  =\Re\int e^{2\Phi/\ee}f\overline v\dd \xi.
\end{equation}
For the nonnegative, locally Lipschitz weights below, apply the identity first to $\Phi_L=\min\{\Phi,L\}$ and then let $L\to\infty$. Each $\Phi_L$ is bounded and globally Lipschitz. For the full operator take $\mathcal V=V_{\sigma,t}^2$ and $\Phi=c(d_t^2-R_0^2)_+$. For the comparison operator take $\mathcal V=U_{\sigma,t}$ and $\Phi=c(\xi^2-R_0^2)_+$. Choose $R_0$ large relative to $E_*$ and $c>0$ small. By \eqref{eq4.2} or \eqref{eq3.8}, outside the set where $\Phi=0$,
\[
\mathcal V-E-|\Phi'|^2\ge c'(1+d_t^2)
\quad\text{or}\quad c'(1+\xi^2).
\]
Where $\Phi=0$ the weight is one. Cauchy-Schwarz and absorption in \eqref{eq4.4} give, for example in the full case,
\begin{equation}\label{eq4.5}
\norm{(1+d_t)e^{\Phi/\ee}v}_2
+\ee\norm{(e^{\Phi/\ee}v)'}_2
\le C\bigl(\norm v_2+\norm{e^{\Phi/\ee}f}_2\bigr).
\end{equation}
An identical statement holds with $|\xi|$ for the comparison operator. To pass to higher spatial derivatives, start with the zeroth and first derivative bounds in \eqref{eq4.5} and use $\ee^2v''=(\mathcal V-E)v-f$. Differentiating this equation expresses each higher derivative through previously controlled derivatives of $v$ and $f$, with coefficients of at most polynomial growth. At each iteration, replace the positive coefficient $c$ in the weight by $c/2$ to absorb these coefficients and the required polynomial moments. Only finitely many iterations are needed. In the full case we use $|\xi|\le d_t(\xi)+2/|t|$, so the additional losses are polynomial in $\ee^{-1}$ and $|t|^{-1}$. At distance comparable to $|t|^{-1}$ from both roots, the weighted derivative norms are then bounded by
\begin{equation}\label{eq4.6}
C\ee^{-M}|t|^{-M}e^{-c/(\ee t^2)}.
\end{equation}
The one-dimensional Sobolev inequality supplies pointwise estimates with additional polynomial factors. Terms such as $e^{cR_0^2/\ee}$ that arise when evaluating the weight at a cutoff are absorbed into a smaller positive coefficient of $1/(\ee t^2)$ by reducing $t_0$.

We also need parameter derivatives of the eigenfunctions $\varphi_n$ of $\Ht$ with $e_n\le4E_*$. Since the eigenvalues are simple, these eigenfunctions can be chosen locally in $(\ee,t)$ to be real and normalized. For $\mu\in\{\ee,t\}$,
\[
(\Ht-e_n)\partial_\mu\varphi_n
=-(\partial_\mu\Ht-\partial_\mu e_n)\varphi_n,
\qquad \ip{\partial_\mu\varphi_n}{\varphi_n}=0.
\]
The reduced inverse has norm at most $C\ee^{-1}$ by \eqref{eq3.10}. It commutes with powers of $1+\Ht$. Lemma~\ref{Lem3.3} supplies polynomial bounds in $\ee^{-1}$ for the derivative in every required graph norm. The derivative $\partial_\ee\Ht=-2\ee\partial_\xi^2$ contributes one additional factor $\ee^{-1}$. The eigenvalue derivative is obtained by pairing with $\varphi_n$.

Fix finitely many parameter derivatives. Write $\theta=(\ee,t)$, $\varphi_\alpha=\partial_\theta^\alpha\varphi_n$, $e_\alpha=\partial_\theta^\alpha e_n$, and $A_\beta=\partial_\theta^\beta\Ht$. For $|\alpha|\ge1$,
\begin{equation}\label{eq4.7}
(\Ht-e_n)\varphi_\alpha
=-\sum_{0<\beta\le\alpha}\binom\alpha\beta
(A_\beta-e_\beta)\varphi_{\alpha-\beta}.
\end{equation}
The nonzero derivatives of the term $-\ee^2\partial_\xi^2$ are $A_{(1,0)}=-2\ee^{-1}(\ee\partial_\xi)^2$ and $A_{(2,0)}=-2\ee^{-2}(\ee\partial_\xi)^2$. The remaining nonzero $A_\beta$ are multiplication operators given by $\partial_t^{\beta_2}U$. All of these map a sufficiently high graph space into any given lower graph space, with a polynomial loss in $\ee^{-1}$.

Pairing \eqref{eq4.7} with $\varphi_n$ gives
\[
  e_\alpha=\langle A_\alpha\varphi_n,\varphi_n\rangle
  +\sum_{0<\beta<\alpha}\binom\alpha\beta
        \langle(A_\beta-e_\beta)\varphi_{\alpha-\beta},\varphi_n\rangle.
\]
Here $0<\beta<\alpha$ means $0\le\beta\le\alpha$ componentwise, with $\beta\ne0$ and $\beta\ne\alpha$; all indices are nonnegative. Differentiated normalization determines the parallel component,
\[
  \langle\varphi_\alpha,\varphi_n\rangle
  =-\frac12\sum_{0<\beta<\alpha}\binom\alpha\beta
                \langle\varphi_\beta,\varphi_{\alpha-\beta}\rangle.
\]
The orthogonal component is obtained by applying the reduced inverse of $\Ht-e_n$ to the right side of \eqref{eq4.7}. This inverse commutes with $1+\Ht$ and has norm at most $C\ee^{-1}$. Induction in $|\alpha|$, using \eqref{eq3.11}, establishes, for all $m,\alpha$,
\[
  |e_\alpha|+\|(1+\Ht)^m\varphi_\alpha\|_2
  \le C_{m,\alpha}\ee^{-M_{m,\alpha}},\qquad e_n\le4E_*.
\]

To retain exponential decay after parameter differentiation, use the weights $\Phi_{(s)}(\xi)=s(\xi^2-R_0^2)_+$ with $s>0$.

\begin{claim}\label{Cl4.2}
For every finite set of indices $(\alpha,r,j)$ there are $s_1>0$, $M$ and $C$ such that
\[
  \|e^{\Phi_{(s_1)}/\ee}\xi^r\partial_\xi^j\varphi_\alpha\|_2\le C\ee^{-M},
  \qquad 0<\ee\le\ee_0,\quad |t|\le t_0,\quad e_n\le4E_*.
\]
\end{claim}
\begin{proof}[Proof of Claim~\ref{Cl4.2}]
The phase $\Phi_{(s_1)}$ is independent of $\ee$ and $t$. For $0<s_1<s_2$ and every integer $r\ge0$,
\[
  \sup_\xi(1+|\xi|)^r
    \exp\!\bigl(-[\Phi_{(s_2)}(\xi)-\Phi_{(s_1)}(\xi)]/\ee\bigr)
  \le C_{r,s_1,s_2,\ee_0},\qquad0<\ee\le\ee_0.
\]
A smaller weight coefficient absorbs any polynomial in $\xi$. Applying \eqref{eq4.5} to \eqref{eq4.7} at fixed parameter values controls $e^{\Phi_{(s_1)}/\ee}\varphi_\alpha$ and its first scaled spatial derivative whenever the lower parameter derivatives on its right side have been controlled with a slightly larger coefficient. The term involving $e_\alpha\varphi_n$ is already known from the scalar identity above.

For the spatial derivatives, put $D_\ee=\ee\partial_\xi$. If $(\Ht-e)v=f$, then
\[
  D_\ee^{j+2}v
  =(U-e)D_\ee^jv
    +\sum_{r=1}^j\binom jr\ee^r U^{(r)}D_\ee^{j-r}v
    -D_\ee^jf.
\]
The coefficients have polynomial growth uniformly in $t$. Starting with the bounds for the weighted derivatives of order zero and one in \eqref{eq4.5}, this identity bounds all required spatial derivatives after finitely many further decreases of $s_1$. The preceding exponential inequality controls every polynomial moment. The right side of \eqref{eq4.7} involves only lower parameter derivatives and at most two additional spatial derivatives. Include these lower parameter derivatives at the higher spatial orders required on the right side; the resulting collection is still finite. We order the finitely many pairs $(\alpha,j)$ that are needed, first by $|\alpha|$ and then by $j$, and let $K$ be their number. Choose $s^{(0)}>s^{(1)}>\cdots>s^{(K)}>0$. At step $k$ we use the weight $\Phi_{(s^{(k)})}$ and the bounds already obtained with the weights $\Phi_{(s^{(k')})}$, $k'<k$. After $K$ steps the claim holds with $s_1=s^{(K)}$.
\end{proof}

On the cutoff region $|\xi|\asymp|t|^{-1}$, a reduction of $t_0$ ensures $\Phi_{(s_1)}(\xi)\ge c/t^2$, which proves \eqref{eq4.6}. The one-dimensional Sobolev inequality supplies the corresponding pointwise estimates, including the indicated derivatives.

\paragraph*{Step 2: Quasimodes and spectral clusters of rank two} The quasimodes are constructed by the two-well comparison method of \cite{HS,Sim}.

Choose $\ell\in C_c^\infty((-1,1))$, with $0\le\ell\le1$ and $\ell=1$ on $[-1/2,1/2]$, and set $\ell_t(\xi)=\ell(t\xi/d_0)$. Define
\[
  w_n^L=\frac{\ell_t\varphi_n}{\norm{\ell_t\varphi_n}_2},
   \qquad w_n^R=S_tw_n^L.
\]
These two vectors have disjoint supports and norm one. The modified and full potentials agree on $\supp\ell_t$, so
\[
  (H-e_n)\ell_t\varphi_n=[-\ee^2\partial_\xi^2,\ell_t]\varphi_n.
\]
The residual is supported where $|\xi|\asymp|t|^{-1}$. The residual and its required parameter derivatives satisfy \eqref{eq4.6} in every graph norm. The normalization differs from one by an error of the same kind. Differentiating $S_tf$ in $t$ produces only factors polynomial in $|t|^{-1}$: for a smooth function $f$ of the parameters and of $\xi$,
\[
  \partial_t(S_tf)
  =S_t(\partial_tf)+\frac{2\sigma}{t^2}S_t(\partial_\xi f).
\]
A second application involves only finitely many parameter and spatial derivatives, with polynomial factors in $|t|^{-1}$.

We next show that every eigenvalue of $H$ below $3E_*$ lies near an eigenvalue of $\Ht$. Define
\[
  A_0=\Ht\oplus\Ht,\qquad P_*=\ind_{[0,3E_*]}(H),\qquad
   (J_tf)(\xi)=\bigl(\ell_t(\xi)f(\xi),\ \ell_t(\xi)f(S_t\xi)\bigr).
\]
The cutoff supports at the two wells are disjoint. Apply the estimates for $H$ in the forbidden region to an orthonormal eigenbasis of $\Ran P_*$. Together with \eqref{eq4.3}, these estimates imply
\begin{equation}\label{eq4.8}
  \norm{(I-J_t^*J_t)P_*}
  +\norm{(A_0J_t-J_tH)P_*}\le \mathcal E,
  \qquad \mathcal E=C\ee^{-M}|t|^{-M}e^{-c/(\ee t^2)}.
\end{equation}
For the second term, use the cutoff commutator on each component. For an orthonormal eigenbasis $(\varphi_j)$ of $\Ran P_*$ and either error operator $T$ above,
\[
  \norm{TP_*}\le\norm{TP_*}_{\mathrm{HS}}
  =\left(\sum_{j=1}^{\operatorname{rank}P_*}\norm{T\varphi_j}_2^2\right)^{1/2}.
\]
By \eqref{eq4.3}, the sum contributes only a polynomial factor in $\ee^{-1}$. Apply the same inequality with $(1+H)^mT$ for $T=I-J_t^*J_t$, and with $(1+A_0)^mT$ for $T=A_0J_t-J_tH$, to obtain the respective graph norm estimates. The exponential in $\mathcal E$ absorbs every fixed polynomial loss, uniformly for $0<\ee\le\ee_0$, as $t_0$ decreases. In particular, shrink $t_0$ so that $\mathcal E\le1/4$.

If $Hf=Ef$, $E\le3E_*$, and $\norm f=1$, then
\[
  \norm{J_tf}\ge(1-\mathcal E)^{1/2},\qquad
  \norm{(A_0-E)J_tf}\le \mathcal E.
\]
By the spectral theorem, $E$ lies within $2\mathcal E$ of some $e_n$. This holds for every eigenvalue of $H$ in $[0,3E_*]$.

Let $c_{\mathrm{gap}}\ee$ be a lower bound for the gaps between adjacent eigenvalues of $\Ht$, see \eqref{eq3.10}. Choose $r_*>0$ with $r_*<c_{\mathrm{gap}}/4$ and $r_*\ee_0<E_*/8$. Shrink $t_0$ further so that $C\mathcal E/\ee\le1/4$ and $2\mathcal E<r_*\ee/2$. For $e_n\le5E_*/2$, put
\[
  I_n=(e_n-r_*\ee,e_n+r_*\ee),\quad
  \Pi_n=\ind_{I_n}(H),\quad \Pi_n^0=\ind_{\{e_n\}}(A_0).
\]
The second condition on $r_*$ ensures $e_n+r_*\ee<3E_*$, so $\Pi_n\le P_*$ and \eqref{eq4.8} applies on $\Ran\Pi_n$. Let $W_n:\C^2\to L^2$ have columns $w_n^L,w_n^R$. The residual estimate implies
\[
  \norm{(1-\Pi_n)W_n}\le(r_*\ee)^{-1}\norm{(H-e_n)W_n}
  \le C\mathcal E/\ee.
\]
Thus $\Pi_nW_n$ is injective and $\operatorname{rank}\Pi_n\ge2$.

Conversely, spectral inclusion and separation of the $e_n$ imply $\norm{(H-e_n)\Pi_n}\le2\mathcal E$. For $f\in\Ran\Pi_n$, \eqref{eq4.8} implies
\[
  \norm{(A_0-e_n)J_tf}\le C\mathcal E\norm f,
  \qquad\norm{(1-\Pi_n^0)J_tf}\le C\mathcal E\ee^{-1}\norm f.
\]
Together with the approximate isometry, this shows that $\Pi_n^0J_t$ is injective on $\Ran\Pi_n$. Since $\Pi_n^0$ has rank two, $\Pi_n$ has rank at most two. Each relevant spectral cluster consists of exactly two eigenvalues. Every eigenvalue of $H$ below $2E_*$ belongs to one of these clusters, and distinct clusters have separation at least $c\ee$.

\paragraph*{Step 3: Matrix comparison and parameter derivatives} We use the projections of rank two to obtain an orthonormal frame for each spectral pair from the quasimodes. Define
\[
  G_n=W_n^*\Pi_nW_n,\qquad
   U_n=\Pi_nW_nG_n^{-1/2},\qquad \mathbf C_n=U_n^*HU_n.
\]
The map $U_n$ is an isometry onto $\Ran\Pi_n$. In every graph norm, including all needed parameter derivatives,
\begin{equation}\label{eq4.9}
  U_n-W_n=O\bigl(\ee^{-M}|t|^{-M}e^{-c/(\ee t^2)}\bigr),
  \quad
  \mathbf C_n-e_nI=O\bigl(\ee^{-M}|t|^{-M}e^{-c/(\ee t^2)}\bigr).
\end{equation}
Fix a parameter with $t\ne0$ and $\ee>0$. The verification of (H1) in the proof of Lemma~\ref{Lem3.2}, applied at $(\eta,a,b)=(0,\sigma,t)$ and $h=\ee$, identifies the graph norms with the corresponding weighted derivative norms for a quartic potential. On a sufficiently small neighborhood, both $|t|$ and $\ee$ stay bounded away from zero, so the constants are locally uniform also as $\ee$ varies. Thus the domains $D((1+H)^m)$ are parameter independent on this neighborhood and their norms are locally equivalent. Parameter derivatives are taken between these spaces. Write $R_z=(z-H)^{-1}$. For fixed $z$ and $\mu,\nu\in\{\ee,t\}$,
\[
  \partial_\mu R_z=R_z(\partial_\mu H)R_z,\qquad
  \partial_\nu\partial_\mu R_z=R_z(\partial_\nu\partial_\mu H)R_z
  +R_z(\partial_\mu H)R_z(\partial_\nu H)R_z+R_z(\partial_\nu H)R_z(\partial_\mu H)R_z,
\]
as operators from $D((1+H)^{m+r})$ to $D((1+H)^m)$ for $r$ large enough. The circle $z(\omega)=e_n+r_*\ee e^{i\omega}$, $0\le\omega\le2\pi$, encloses $I_n\cap\Spec(H)$ and stays at distance at least $c\ee$ from $\Spec(H)$. The exact identity
\[
  (z-H)^{-1}W_n=(z-e_n)^{-1}W_n
   +(z-e_n)^{-1}(z-H)^{-1}(H-e_n)W_n
\]
and $(z-e_n)^{-1}dz=i\,d\omega$ give
\[
  \Pi_nW_n-W_n=\frac1{2\pi}\int_0^{2\pi}R_{z(\omega)}(H-e_n)W_n\dd\omega.
\]
The integrand depends on $(\ee,t)$ through $H$, through $z(\omega)$ and through $(H-e_n)W_n$. Along the circle, $\partial_\mu R_z=R_z(\partial_\mu H-\partial_\mu z)R_z$, where $\partial_\mu z=\partial_\mu e_n+r_*e^{i\omega}\partial_\mu\ee$. Every derivative of order at most two of the integrand is a finite sum of terms
\[
  R_zT_1R_z\cdots T_jR_z\,\partial^\beta\bigl[(H-e_n)W_n\bigr],
  \qquad 0\le j\le2,\quad|\beta|\le2,
\]
where each $T_i$ is $\partial_\mu H-\partial_\mu z$ or $\partial_\nu\partial_\mu H-\partial_\nu\partial_\mu z$. Every such term contains the residual $(H-e_n)W_n$ or one of its derivatives. The derivatives of $e_n$ have the polynomial bounds established above. The graph estimates needed between resolvents have the form
\begin{align*}
  \norm{(1+H)^m(z-H)^{-1}v}_2
  &\le C\ee^{-1}\norm{(1+H)^mv}_2,\\
  \norm{(1+H)^mAv}_2
  &\le C\ee^{-M}|t|^{-M}\norm{(1+H)^{m+r_A}v}_2,
\end{align*}
where $A$ is a parameter derivative of $H$. For the second estimate, completing the square yields $V_{\sigma,t}(\xi)=\frac t2(\xi+\sigma/t)^2-\frac1{2t}$. Apply the polynomial graph argument in the proof of Lemma~\ref{Lem3.2} at $(\eta,a,b)=(0,\sigma,t)$ and $h=\ee$. Its loss parameter is $L=1+|t|^{-1}$, which accounts for the factor $|t|^{-M}$. This applies to the $t$-derivatives of $H$. With $D_\ee=\ee\partial_\xi$, the $\ee$-derivatives are $-2\ee^{-1}D_\ee^2$ and $-2\ee^{-2}D_\ee^2$; the same normal-ordering argument applies to $D_\ee^2$, with these additional powers of $\ee^{-1}$. The range $1\le\ee\le\ee_0$ changes only the constants, and the derivatives of $z(\omega)$ are bounded by those of $e_n$. Because the residual is small in every graph norm, these estimates control any finite differentiated resolvent product. The map $G_n\mapsto G_n^{-1/2}$ is analytic near the identity, so the normalization preserves these bounds. Finally use
\[
  \mathbf C_n-e_nI
   =G_n^{-1/2}W_n^*\Pi_n(H-e_n)W_nG_n^{-1/2}
\]
to obtain the second assertion in \eqref{eq4.9}, with derivatives.

Reflection commutes with $H$ and $\Pi_n$ and swaps the columns of $W_n$. Since all data are real, it also swaps the columns of $U_n$, and
\[
  \mathbf C_n=\begin{pmatrix}\mathsf a_n&\mathsf b_n\\\mathsf b_n&\mathsf a_n\end{pmatrix},
   \qquad E_n^\pm=\mathsf a_n\pm \mathsf b_n.
\]
The fixed matrix $2^{-1/2}\bigl(\begin{smallmatrix}1&1\\1&-1\end{smallmatrix}\bigr)$ diagonalizes $\mathbf C_n$. Both $E_n^\pm-e_n$, with all required derivatives, satisfy \eqref{eq4.6}. No reciprocal of $E_n^+-E_n^-$ appears.

\paragraph*{Step 4: Assembly of the kernel and Sobolev estimates} Write $U_n^L,U_n^R$ for the columns of $U_n$. For $q$ with $|tq|\le d_0/2$,
\[
  w_n^L(q)=\frac{\varphi_n(q)}{\norm{\ell_t\varphi_n}_2},\qquad w_n^R(q)=0.
\]
The difference $w_n^L(q)-\varphi_n(q)$ and its derivatives obey \eqref{eq4.6}. The corresponding assertions for $U_n$ hold up to errors of that size, by \eqref{eq4.9} and the pointwise graph estimates. Moreover $U_n^L-\varphi_n$ is exponentially small in all needed weighted derivative norms, whereas $U_n^R$ has polynomial bounds. These statements include $q$-derivatives.

Put
\[
  F^+_{n,\tau}=\tfrac12\bigl(f_\tau(E_n^+)+f_\tau(E_n^-)\bigr),\qquad
  F^-_{n,\tau}=\tfrac12\bigl(f_\tau(E_n^+)-f_\tau(E_n^-)\bigr).
\]
The contribution of the $n$th pair to the full kernel is
\begin{align*}
  k_{n,\tau}(\xi,q)=F^+_{n,\tau}\{U_n^L(\xi)U_n^L(q)+U_n^R(\xi)U_n^R(q)\}+F^-_{n,\tau}\{U_n^L(\xi)U_n^R(q)+U_n^R(\xi)U_n^L(q)\}.
\end{align*}
All the eigenfunctions and columns of $U_n$ are real. Subtracting the contribution of $\Ht$ gives
\begin{align}
  &k_{n,\tau}(\xi,q)-f_\tau(e_n)\varphi_n(\xi)\varphi_n(q)\notag\\
  &\quad=(F^+_{n,\tau}-f_\tau(e_n))\varphi_n(q)U_n^L(\xi)
             +F^-_{n,\tau}\varphi_n(q)U_n^R(\xi)\notag\\
  &\qquad+f_\tau(e_n)\varphi_n(q)(U_n^L(\xi)-\varphi_n(\xi))\notag\\
  &\qquad+F^+_{n,\tau}\{(U_n^L(q)-\varphi_n(q))U_n^L(\xi)
                               +U_n^R(q)U_n^R(\xi)\}\notag\\
  &\qquad+F^-_{n,\tau}\{U_n^R(q)U_n^L(\xi)
                         +(U_n^L(q)-\varphi_n(q))U_n^R(\xi)\}.
  \label{eq4.10}
\end{align}
Every term after the first two on the right of \eqref{eq4.10} contains an amplitude that satisfies \eqref{eq4.6}, together with all derivatives needed. In the second term, the factor $F^-_{n,\tau}$ is small in $L^2_\tau$ by \eqref{eq2.11} and controls the polynomially bounded column $U_n^R$. Since $U_n^R$ is concentrated where $|\xi|\lesssim|t|^{-1}$, the weight $\xi^k$ acting on it produces only a factor $C|t|^{-k}$.

For either parameter $\mu\in\{\ee,t\}$,
\begin{align*}
\partial_\mu\{f_\tau(E_n^\pm)-f_\tau(e_n)\}
=[f_\tau'(E_n^\pm)-f_\tau'(e_n)]\partial_\mu e_n+f_\tau'(E_n^\pm)\partial_\mu(E_n^\pm-e_n).
\end{align*}
For $\mu,\nu\in\{\ee,t\}$, a second derivative gives
\begin{align*}
  &\partial_\nu\partial_\mu\{f_\tau(E_n^\pm)-f_\tau(e_n)\}\\
  &\quad=[f_\tau''(E_n^\pm)-f_\tau''(e_n)]
                          \partial_\mu e_n\partial_\nu e_n\\
  &\qquad+[f_\tau'(E_n^\pm)-f_\tau'(e_n)]\partial_\mu\partial_\nu e_n\\
  &\qquad+f_\tau''(E_n^\pm)\{
        \partial_\mu(E_n^\pm-e_n)\partial_\nu E_n^\pm
        +\partial_\mu e_n\partial_\nu(E_n^\pm-e_n)\}\\
  &\qquad+f_\tau'(E_n^\pm)\partial_\mu\partial_\nu(E_n^\pm-e_n).
\end{align*}
Only derivatives of $f_\tau$ of order at most two occur. The derivatives of $e_n$ are polynomially bounded, and every derivative of $E_n^\pm-e_n$ satisfies \eqref{eq4.6}. The differences of scalar derivatives are controlled by \eqref{eq2.11}. The remaining terms are bounded by \eqref{eq2.10} using their exponentially small amplitudes. Since
\[
  F^+_{n,\tau}-f_\tau(e_n)
  =\tfrac12\sum_{\pm}(f_\tau(E_n^\pm)-f_\tau(e_n)),\qquad
  F^-_{n,\tau}=\tfrac12\{f_\tau(E_n^+)-f_\tau(E_n^-)\},
\]
these estimates control the first two terms in \eqref{eq4.10}. In every other term, Leibniz's rule leaves an exponentially small amplitude and uses at most $i+j$ scalar derivatives of $f_\tau$. Derivatives in $\xi,q$ and multiplication by $\xi^k$ act only on the amplitudes.

At each parameter choose a finite set, locally independent of the parameter, containing every index $n$ for which $e_n$ or $E_n^\pm$ lies in a neighborhood of the support of some $f_\tau$. Outside that set the terms and their indicated derivatives vanish. After enlarging $J_0$ by a fixed margin and reducing $t_0$, all active $e_n,E_n^\pm$ lie in a common compact subinterval of $(0,\infty)$. Step 2 places every eigenvalue of $H$ contributing to $f_\tau(H)$ in one of the pairs. Step 3 supplies uniform estimates for each pair. There are at most $C\ee^{-1}$ such indices. Summing with Lemma~\ref{Lem2.3} and the triangle inequality establishes \eqref{eq4.1}, with Sobolev order $i+j+\kappa$. Raising the exponentially small differences $|E_n^\pm-e_n|$ to the power $\kappa$ only decreases the positive coefficient in the exponential.
\end{proof}

\begin{remark}\label{Rem4.3}
Lemma~\ref{Lem4.1} compares the kernel columns $k(\cdot,q)$ for $q$ in the well near zero, with $\xi$ ranging over the whole line. The restriction on the source variable reflects the contribution of the escaping well. To see this, fix $\ee>0$, $\tau\in[1,2]$ and $n\ge0$ such that $F(\tau^2\ee(2n+1))\ne0$. The normalized vector $w_n^R$ is a quasimode of $H_{\ee,t}$ at energy $e_n(\ee,t)$, while its projection onto the spectral subspace of $\Ht_{\ee,t}$ associated with $\supp f_\tau$ tends to zero by the Agmon estimates in Claim~\ref{Cl4.2}. Since $e_n(\ee,t)\to\ee(2n+1)$, testing the multiplier difference on $w_n^R$ gives
\[
  \liminf_{t\to0}\norm{F(\tau^2H_{\ee,t})-F(\tau^2\Ht_{\ee,t})}_{2\to2}
  \ge |F(\tau^2\ee(2n+1))|>0.
\]
The same quasimodes also explain the failure of norm resolvent convergence to $H_{\ee,0}$. They converge weakly to zero, and the harmonic oscillator $H_{\ee,0}$ has compact resolvent. Thus $(H_{\ee,0}+1)^{-1}w_n^R\to0$, whereas $(H_{\ee,t}+1)^{-1}w_n^R=(e_n(\ee,t)+1)^{-1}w_n^R+o_{L^2}(1)$.
\end{remark}

We apply Lemma~\ref{Lem4.1} and Corollary~\ref{Cor3.5} to $P_{h,\theta}$ by expressing its coefficients through an affine change of variables. The argument below remains valid with $\Omega_0$ replaced throughout by any fixed compact annulus in $\R^2\setminus\{(0,0)\}$; its constants, including $\delta_*$, may depend on that annulus. In particular, it applies on the enlarged annulus $\Omega'$ of Section~\ref{Sec5}. On the annulus $\Omega_0$ in the $(a,b)$-plane, take a small $\delta_*>0$ and consider
\begin{equation}\label{eq4.11}
  |b|\br\eta\le2\delta_*.
\end{equation}
Here $|a|\ge c>0$. Set
\[
  \varrho=(a^2-2\eta b)^{1/2},\qquad
   \sigma=\sgn a,\qquad t=b/\varrho^2,\qquad
   q=\frac{2\eta \varrho}{a+\sigma \varrho}.
\]
The denominator is bounded away from zero, $c\le \varrho\le C$, $|q|\asymp|\eta|$, $|tq|\le C\delta_*$, and every $a,b$ derivative of finite order is polynomially bounded in $\br\eta$. Fix $\delta_1>0$ so that the bounds $c\le\varrho\le C$ hold with fixed constants for every $0<\delta_*\le\delta_1$. Using this lower bound in the energy estimate following \eqref{eq4.13}, choose $h_0\ge1$ so that the multipliers vanish for $h\ge h_0$, set $\ee_0=h_0C$, take $t_0$ from Lemma~\ref{Lem4.1}, and finally choose $\delta_*\le\delta_1$ small enough to satisfy the conditions below. For $b\ne0$, $q=\varrho(a-\sigma \varrho)/b$. The displayed formula is smooth across $b=0$. The defining relations imply
\[
  \eta=\sigma q+tq^2/2,\qquad a=\varrho(\sigma+tq),\qquad b=\varrho^2t,
   \qquad p_\theta(u)=V_{\sigma,t}(q+\varrho u).
\]
The affine change $\xi=q+\varrho u$ takes $P_{h,\theta}$ to $H_{h\varrho,t}$. Choose $\delta_*$ so that $|t|\le t_0$ and $|tq|\le d_0/2$. The exact column identity is
\[
  k_{F(\tau^2P_{h,\theta})}(u,0)
     =\varrho\,k_{F(\tau^2H_{h\varrho,t})}(q+\varrho u,q).
\]
For the parameter derivatives, put $\Psi_\tau(u)=\varrho \Xi_\tau(q+\varrho u,q;h\varrho,t)$. Keeping $h,u$ fixed gives
\begin{align*}
  \partial_a\Psi_\tau=\varrho_a\Xi_\tau+\varrho\bigl[(q_a+\varrho_au)\partial_\xi\Xi_\tau+q_a\partial_q\Xi_\tau
              +h\varrho_a\partial_\ee \Xi_\tau+t_a\partial_t\Xi_\tau\bigr].
\end{align*}
All terms on the right are evaluated at $(q+\varrho u,q;h\varrho,t)$. The formula for $\partial_b$ is identical with $a$ replaced by $b$. A second application of the product rule produces second derivatives involving at most two parameter derivatives of $\Xi_\tau$, so Lemma~\ref{Lem4.1} applies to every term, including derivatives in both variables. The scalar coefficients and powers of $u=(\xi-q)/\varrho$ in this expansion have polynomial growth in $\br\eta$. In \eqref{eq4.11}, $\br\eta\le C/|t|$ when $t\ne0$. Since $\ee=h\varrho$ and $c\le \varrho\le C$, each comparison error is bounded by a fixed polynomial in $h^{-1},|t|^{-1}$ times $e^{-c_1/(ht^2)}$. For all $A,B,N\ge0$,
\[
  h^{-A}|t|^{-B}e^{-c_1/(ht^2)}
  \le C_{A,B,N}h^{-1/2}|t|^N
  \le C'_{A,B,N}h^{-1/2}\br\eta^{-N},
  \qquad 0<h\le1,\quad0<|t|\le t_0.
\]
Here $e^{-c_1x}\le C_Lx^{-L}$ for $x>0$, and one takes $L\ge\max\{A-1/2,(B+N)/2,0\}$ with $x=(ht^2)^{-1}$. Combining Lemma~\ref{Lem4.1} with Corollary~\ref{Cor3.5} yields, for $k+l\le2$,
\begin{equation}\label{eq4.12}
  \norm{\partial_a^k\partial_b^l[F(\tau^2P_{h,\theta})\delta_0]}_{L^2_\tau L^2_u}
  \le C_{N,\kappa}h^{-1/2}\br\eta^{-N}
                \norm{F}_{\Hs{k+l+\kappa}},\qquad 0<h\le1.
\end{equation}
Now consider the parameters with $b=0$, that is, $t=0$. There $H_{\ee,0}=\Ht_{\ee,0}=-\ee^2\partial_\xi^2+\xi^2$, so the comparison error vanishes. Let $F\in C_c^\infty(J)$. By \eqref{eq2.3} and the Schwartz calculus, as in the proof of Lemma~\ref{Lem2.1}, the column $F(\tau^2P_{h,\theta})\delta_0$ is a $C^2$ function of $(a,b)$ with values in $L^2_u$, also across $b=0$, uniformly for $1\le \tau\le2$. By Lemma~\ref{Lem3.3} and the proof of Corollary~\ref{Cor3.5}, the same holds for the column for the comparison operator $F(\tau^2\Pt_{h,\theta})\delta_0$, since $t$, $q$ and $\varrho$ depend smoothly on $(a,b)$ across $b=0$. The difference $\Psi_\tau$ is consequently $C^2$ across $b=0$. Since these derivatives are continuous in $(a,b)$ uniformly in $\tau$, their $L^2_\tau L^2_u$ norms are continuous in $(a,b)$. For $t\ne0$, the derivatives of $\Psi_\tau$ of order at most two satisfy the bound above, which tends to zero as $t\to0$. By continuity these derivatives vanish at $t=0$. Estimate \eqref{eq4.12} holds throughout \eqref{eq4.11}.

We next show that this region contributes nothing for sufficiently large $h$. If a quadratic polynomial has two distinct real roots at which the derivative has absolute value $\varrho>0$, its explicit inverse branches imply, for every $A>0$,
\begin{equation}\label{eq4.13}
  |\{u:|p(u)|\le A\}|\le CA/\varrho.
\end{equation}
For $b\ne0$ and $A<|p(-a/b)|$, the length is given by a difference of two square roots. For $A\ge|p(-a/b)|$, it is $O(\sqrt{A/|b|})\le CA/\varrho$. In the linear case $b=0$ the set is an interval of length $2A/|a|=2A/\varrho$. If $v$ is normalized and has form energy $E$ for $P_{h,\theta}$, then at least $3/4$ of its mass lies in $|p|\le2\sqrt E$, while
\[
  \norm v_\infty^2\le2\norm v_2\norm{v'}_2\le2\sqrt E/h.
\]
Together with \eqref{eq4.13}, this implies $E\ge ch\varrho$. Thus $F(\tau^2P_{h,\theta})=0$ in \eqref{eq4.11} when $h$ is large enough. Local parameter derivatives vanish there as well.
\section{Weighted kernel estimates}\label{Sec5}
The parameter estimates from Sections~\ref{Sec3} and~\ref{Sec4} give spatial moments of the kernels through \eqref{eq2.7}. We begin by integrating over an annulus in the $(a,b)$-plane and using homogeneity to pass from the average in $\tau$ to a fixed multiplier. All constants in this section depend on the interval $J$.
\begin{proposition}\label{Prop5.1}
Let $\Omega\Subset\R^2\setminus\{(0,0)\}$ be an annulus. For $k,l\in\N_0$ with $k+l\le2$ and $\kappa>0$,
\begin{equation}\label{eq5.1}
  \int_{\R\times\Omega}\norm{\partial_a^k\partial_b^l v_F(h,\theta)}_2^2\dd\theta
  \le C h^{-1}\Lambda(h)^4\norm F_{\Hs{k+l+\kappa}}^2,
  \qquad 0<h\le1.
\end{equation}
For $h\ge1$, the left side is at most $C_{N,\kappa}h^{-N}\norm F_{\Hs{k+l+\kappa}}^2$ for every $N$. When $k=l=0$, the small-$h$ estimate holds without $\Lambda(h)^4$ and without $\kappa$.
\end{proposition}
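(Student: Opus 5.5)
Split $\R\times\Omega$ according to \eqref{eq4.11}: the escaping-well region $|b|\br\eta\le2\delta_*$ and its complement $|b|\br\eta>2\delta_*$. On the complement, $|b|$ is bounded below by $c\br\eta^{-1}$, so $L=1+|\eta|+|b|^{-1}\lesssim\br\eta$. Lemma~\ref{Lem3.2} then shows that $P_{h,\theta}$ satisfies Hypothesis (H) with constants $CL^C\lesssim\br\eta^C$.

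On this complementary region I would use the averaged calculus. Write $f_\tau(P)\delta_0=f_\tau(P)\psi_\tau(P)\delta_0$ as in \eqref{eq2.21}, and apply Leibniz's rule to $\partial_a^k\partial_b^l$. This produces terms $(\partial^{k_1}f_\tau(P))\,\partial^{k_2}\psi_\tau(P)\delta_0$ with $k_1+k_2=k+l$. For each term, apply Lemma~\ref{Lem2.4} with $\mathbf v(\tau)=\partial^{k_2}\psi_\tau(P)\delta_0$; its constant grows polynomially in $\br\eta$. Lemma~\ref{Lem2.1} bounds $\norm{\mathbf v}_{H^1_\tau L^2_u}$ by $Ch^{k_2-1/2}\br\eta^{-N}$, where the $h$-power is favourable since $k_2\ge0$. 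This yields
\[
\norm{\partial_a^k\partial_b^l f_\tau(P_{h,\theta})\delta_0}_{L^2_\tau L^2_u}\le Ch^{-1/2}\Lambda(h)^{2}\br\eta^{-N}\norm F_{H^{k+l}}.
\]
On the region \eqref{eq4.11}, estimate \eqref{eq4.12} gives the same bound with $\Lambda\equiv1$ and order $k+l+\kappa$. Squaring and integrating over $\eta\in\R$ and $(a,b)\in\Omega$, where the factor $\br\eta^{-2N}$ is integrable, gives the $\tau$-averaged version of \eqref{eq5.1}:
\[
\int_1^2\int_{\R\times\Omega}\norm{\partial_a^k\partial_b^l v_{F(\tau^2\cdot)}(h,\theta)}_2^2\dd\theta\dd\tau\le Ch^{-1}\Lambda(h)^4\norm F_{H^{k+l+\kappa}}^2.
\]
Patching the two regions needs a smooth partition in $(\eta,a,b)$ only if derivatives cross the boundary. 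This is avoidable because the derivatives are pointwise in $\theta$, so a plain set splitting suffices.

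To remove the $\tau$-average, I would use homogeneity. Since $\tau^2P_{h,\theta}$ is unitarily equivalent, via $u\mapsto\tau^{-1}u$ (up to normalization), to $P_{h',\theta'}$ with $h'=\tau^{2}h\cdot\tau^{-1}$-type rescaling, the quantity $F(\tau^2P_{h,\theta})\delta_0$ equals $v_F$ at a rescaled $(h',\theta')$ with $h'\asymp h$. Explicitly, by the dilation structure \eqref{eq2.6}, $F(\tau^2\calL)$ corresponds to $K_F$ dilated by $\tau$. Under this dilation, $(\eta,\zeta,\lambda)\mapsto(\tau\eta,\tau^2\zeta,\tau^3\lambda)$, and so $(h,a,b)\mapsto(\tau h,\tau a,\tau b)$ roughly. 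Hence the averaged integral over $\Omega$ at scale $h$ dominates the fixed-multiplier integral over a slightly smaller annulus $\Omega$ at comparable scales. I would prove the averaged bound for an enlarged annulus $\Omega'$, as permitted by the remark before \eqref{eq4.11}. A change of variables with Jacobian bounded above and below then shows that, for each $\tau\in[1,2]$, the integral for $F$ over $\Omega$ at scale $h$ is bounded by the $\tau$-integral over $\Omega'$ at scales in $[h/2,2h]$. Making this change of variables precise, including the powers of $\tau$ from $\partial_a,\partial_b$ and from the $\delta_0$ normalization, is routine but needs care.

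For $h\ge1$, the final paragraph of Section~\ref{Sec4} shows that the multiplier vanishes on \eqref{eq4.11} for $h\ge h_0$. On the complement, Lemma~\ref{Lem2.4} in its $1\le h\le h_0$ variant, or simply Lemma~\ref{Lem2.1}, handles bounded $h$. For large $h$, the $h^{-N}$ decay should come from the lower bound on the energy. The scaling identity \eqref{eq2.6} shows that $v_F(h,\theta)$ corresponds to $\FA K_F$ at $(\eta,ha,h^2b)$ with $|ha|+|h^2b|^{1/2}\asymp h$. I would combine this with the Schwartz decay of Lemma~\ref{Lem2.1}, applied through $F=F\psi$ with the operator-norm bounds of Corollary~\ref{Cor2.5}, to convert the factor $(1+h|a|+h^2|b|)^{-N}$ into $h^{-N}$.

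For $k=l=0$, the estimate is exact: by Lemma~\ref{Lem2.3} with $k=0$ and \eqref{eq2.12}, no $\Lambda$ and no $\kappa$ enter. Alternatively, directly $\norm{v_F}_2\le\norm F_\infty\cdots$ fails, so I would use the averaged spectral coefficients as in the $k=0$ case of Lemma~\ref{Lem2.4}. I expect the main obstacle to be the de-averaging step: ensuring that the dilation maps the annulus and the two parameter regions compatibly, with uniform Jacobians.
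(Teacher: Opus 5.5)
Your splitting of the parameter region is the same as the paper's, and so are the pointwise $\tau$-averaged bounds on each region: Lemma~\ref{Lem2.4} with $\mathbf v=\partial^{k_2}\psi_\tau(P)\delta_0$ and Lemma~\ref{Lem2.1} off \eqref{eq4.11}, and \eqref{eq4.12} on it. The gap is in the de-averaging step, which you rightly single out, because your version of it does not work.

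Your identity $\tau^2P_{h,\theta}=P_{\tau h,\tau\theta}$ is correct, but it moves $\tau$ into the semiclassical scale. It gives $v_F(h,\theta)=f_\tau(P_{h/\tau,\theta/\tau})\delta_0$. Averaging over $\tau$ then produces $\int_1^2\norm{\partial_a^k\partial_b^l f_\tau(P_{h/\tau,\theta'})\delta_0}_2^2\dd\tau$, and here the operator, hence its spectral nodes, moves with $\tau$. Lemma~\ref{Lem2.3} controls $\int_1^2|f_\tau^{[k]}(E_0,\ldots,E_k)|^2\dd\tau$ only for nodes independent of $\tau$. Lemma~\ref{Lem2.4} is proved for a single fixed operator. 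Knowing the averaged bound at every fixed scale does not control this diagonal average. Read the other way, your identity only writes the averaged quantity at scale $h$ as an average of fixed-multiplier quantities over scales in $[h,2h]$, and that bounds no single scale.

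The fix is to keep $h$ fixed and put the whole dilation into $\theta$ and a spatial unitary. With $D_\tau(\eta,a,b)=(\tau\eta,\tau^2a,\tau^3b)$ and $U_\tau v(u)=\tau^{1/2}v(\tau u)$ one has
\[
P_{h,D_\tau\theta}=\tau^2U_\tau P_{h,\theta}U_\tau^{-1},\qquad U_\tau^{-1}\delta_0=\tau^{1/2}\delta_0,
\]
so that
\[
(\partial_a^k\partial_b^lv_F)(h,D_\tau\theta)=\tau^{1/2-2k-3l}U_\tau\partial_a^k\partial_b^l\bigl[f_\tau(P_{h,\theta})\delta_0\bigr].
\]
For each $\tau\in[1,2]$, change variables $\theta\mapsto D_\tau\theta$ (Jacobian $\tau^6$) and average in $\tau$. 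Choose $\Omega'$ to contain the $(a,b)$-projection of $D_\tau^{-1}(\R\times\Omega)$ for all $\tau$. This gives \eqref{eq5.2}, with the same $h$ on both sides and only the multiplier depending on $\tau$. Your own remark $(\eta,\zeta,\lambda)\mapsto(\tau\eta,\tau^2\zeta,\tau^3\lambda)$ yields exactly this once $\zeta=ha$ and $\lambda=h^2b$ are read at fixed $h$. Equivalently, combine your identity with \eqref{eq2.6}, which identifies scale $\tau h$ on one annulus with scale $h$ on a dilated annulus.

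The case $h\ge1$ also needs repair. Lemma~\ref{Lem2.1} and Corollary~\ref{Cor2.5} concern fixed smooth cutoffs. They bound $\psi_\tau(P)\delta_0$ and its derivatives, but not $\partial^{k_1}f_\tau(P)$ in terms of $\norm F_{H^{k+l+\kappa}}$. So neither ``simply Lemma~\ref{Lem2.1}'' nor Corollary~\ref{Cor2.5} applied to $F$ gives the stated bound.

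\begin{itemize}
\item Off \eqref{eq4.11}, rerun the proof of Lemma~\ref{Lem2.4} with intervals of unit length. Track that \eqref{eq3.1} holds for all $h>0$, that a bounded interval contains at most $Ch^{-1}|b|^{-1/2}$ eigenvalues, and that the commutator constants in Lemma~\ref{Lem3.2} are $C(1+h)^CL^C$. These polynomial losses in $h$ and $\br\eta$ are absorbed by the decay in \eqref{eq2.8}, using $h|a|+h^2|b|\ge ch$ on $\Omega'$.
\item On \eqref{eq4.11}, the multipliers vanish only for $h\ge h_0$, and \eqref{eq4.12} is stated only for $h\le1$. For $1\le h\le h_0$ you must keep the comparison argument: Lemma~\ref{Lem4.1} is available up to $\ee_0=h_0C$, together with the trace-bound variant of Lemma~\ref{Lem2.4} for the comparison operator.
\end{itemize}
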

\begin{proof}
To pass from the averaged estimates to a fixed multiplier, we use homogeneity. Define
\[
  D_\tau(\eta,a,b)=(\tau\eta,\tau^2a,\tau^3b),\qquad
  U_\tau v(u)=\tau^{1/2}v(\tau u).
\]
Then
\[
  P_{h,D_\tau\theta}=\tau^2U_\tau P_{h,\theta}U_\tau^{-1},\qquad
   U_\tau^{-1}\delta_0=\tau^{1/2}\delta_0.
\]
For $f_\tau(E)=F(\tau^2E)$, the differentiated identity is
\[
  (\partial_a^k\partial_b^l v_F)(h,D_\tau\theta)
   =\tau^{1/2-2k-3l}U_\tau\partial_a^k\partial_b^l
         [f_\tau(P_{h,\theta})\delta_0].
\]
Since $U_\tau$ is unitary, the squared column norm contributes $\tau^{1-4k-6l}$, while the Jacobian of $D_\tau$ is $\tau^6$. Their product is uniformly bounded for $1\le \tau\le2$. An enlarged annulus $\Omega'$ contains the $(a,b)$-projection of $D_\tau^{-1}(\R\times\Omega)$ for every $1\le \tau\le2$. Changing variables for each $\tau$ and averaging, we obtain
\begin{equation}\label{eq5.2}
  \int_{\R\times\Omega}\norm{\partial_a^k\partial_b^l v_F(h,\theta)}_2^2\dd\theta
  \le C\int_{\R\times\Omega'}\int_1^2
  \norm{\partial_a^k\partial_b^l[f_\tau(P_{h,\theta})\delta_0]}_2^2
  \dd \tau\dd\theta.
\end{equation}

Split the parameter integral measurably according to $b^2(1+\eta^2)/\delta_*^2$, assigning each parameter to $|b|\br\eta\le2\delta_*$ or its complement. The complement lies in $|b|\br\eta\ge c\delta_*$, so the two estimates below cover this disjoint decomposition. Apply \eqref{eq4.12} pointwise on the first region. On the second, $|b|^{-1}\le C\br\eta$, and Lemmas~\ref{Lem3.1} and~\ref{Lem3.2} verify Hypothesis (H) with constants polynomial in $\br\eta$. Choose a smooth $\psi$ equal to one near $J$ and differentiate
\[
  f_\tau(P)\delta_0=f_\tau(P)\psi(\tau^2P)\delta_0.
\]
The vectors $\psi_\tau(P)\delta_0$ and their derivatives obey Lemma~\ref{Lem2.1}, including one $\tau$-derivative, and their rapid decay in $\eta$ absorbs every polynomial loss. The derivatives in Lemma~\ref{Lem3.2} include both pure second derivatives, so the same argument applies to $(k,l)=(2,0)$ and $(0,2)$. For every $k+l\le2$ this yields
\[
  \norm{\partial_a^k\partial_b^l[f_\tau(P_{h,\theta})\delta_0]}_{L^2_\tau L^2_u}
  \le C_Nh^{-1/2}\Lambda(h)^{k+l}\br\eta^{-N}
                    \norm F_{\Hs{k+l+\kappa}}.
\]
Since these bounds hold pointwise in $\theta$, we split the $\theta$-integral into the two regions. The decay in $\eta$ absorbs the polynomial losses in $\br\eta$; integrating over this decomposition, we obtain \eqref{eq5.1}.

For large $h$, the operator $P_{h,\theta}$ has no spectrum in $J_0$ on the first region. Indeed, the energy bound following \eqref{eq4.13}, together with the uniform positive lower bound for $\varrho$ there, shows that, with $h_0$ as fixed after \eqref{eq4.11}, $F(\tau^2P_{h,\theta})=0$ for $h\ge h_0$ and $1\le\tau\le2$, and its local parameter derivatives vanish as well. On the second region apply the proof of Lemma~\ref{Lem2.4} using intervals of unit length. The constants in the trace, eigenvalue counting, and graph estimates are polynomial in $h$ and $\br\eta$. For example, the number of eigenvalues in a bounded interval is at most $Ch^{-1}|b|^{-1/2}$, and is thus polynomial in $\br\eta$. The finitely many commutators have polynomial $h$ losses by Lemma~\ref{Lem3.2}. On $\Omega'$,
\[
  h|a|+h^2|b|\ge ch,\qquad h\ge1.
\]
The decay in \eqref{eq2.8} absorbs any polynomial in $h$ and $\eta$. For $1\le h\le h_0$, retain the comparison argument on the first region, with $\ee_0$ as fixed after \eqref{eq4.11} and using the part of the proof of Lemma~\ref{Lem2.4} that uses the trace bound, applied to the comparison operator. On the second region, use the same argument in place of the small-$h$ cluster estimate. The constants may depend on $h_0$: the comparison error still decays rapidly in $\eta$, while the remaining trace and graph constants grow polynomially in $\br\eta$ and are absorbed by the column decay. Enlarging the constant by a factor depending on $h_0$ and $N$ yields the asserted large-$h$ estimate throughout $h\ge1$.

For $k=l=0$, neither derivative terms nor cluster bounds are needed. At a fixed $P$, we use its eigenbasis, the Sobolev inequality \eqref{eq2.12}, and Lemma~\ref{Lem2.3} with $k=0$ on $f_\tau(P)\psi(\tau^2P)\delta_0$. Lemma~\ref{Lem2.1} bounds the $L^2_\tau L^2_u$ norm by $Ch^{-1/2}\br\eta^{-N}\norm F_2$; substitution into \eqref{eq5.2} establishes the assertion for $k=l=0$ without logarithmic or Sobolev loss.
\end{proof}

Formula~\eqref{eq2.7} converts Proposition~\ref{Prop5.1} into estimates for the integer moments of each localized kernel.
\begin{proposition}\label{Prop5.2}
For $k,l\in\N_0$ with $k+l\le2$ and $\kappa>0$,
\[
  \norm{\br z^k\br w^lK_{F,h}}_2
   \le C_\kappa h^{3/2-k-2l}\Lambda(h)^2
            \norm F_{\Hs{k+l+\kappa}},\qquad 0<h\le1.
\]
For every $N$,
\[
  \norm{\br z^k\br w^lK_{F,h}}_2
  \le C_{N,\kappa}h^{-N}\norm F_{H^{k+l+\kappa}},\qquad h\ge1.
\]
When $k=l=0$, one also has
\[
  \norm{K_{F,h}}_2\le Ch^{3/2}\norm F_2,\qquad 0<h\le1.
\]
\end{proposition}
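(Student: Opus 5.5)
The plan is to read off Proposition~\ref{Prop5.2} from the Plancherel identity \eqref{eq2.7} together with Proposition~\ref{Prop5.1}, applied on the annulus $\Omega_0$ carrying $\supp\vartheta$. First replace the inhomogeneous weights by homogeneous ones. Since $\br z^k\le C\sum_{k'\le k}|z|^{k'}$ and similarly for $w$, it suffices to bound $\norm{z^{k'}w^{l'}K_{F,h}}_2$ for all $k'\le k$, $l'\le l$. For $0<h\le1$ the exponent $3/2-k'-2l'$ is at least $3/2-k-2l$, so $h^{3/2-k'-2l'}\le h^{3/2-k-2l}$. The Sobolev orders $k'+l'+\kappa$ are dominated by $k+l+\kappa$. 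Thus the lower-order terms are absorbed, and only pure monomial weights $z^kw^l$ with $k+l\le2$ need to be treated.

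For these, \eqref{eq2.7} gives
\[
\norm{z^kw^lK_{F,h}}_2^2=(2\pi)^{-3}h^{4-2k-4l}\int_{\R^3}\norm{\partial_a^k\partial_b^l(\vartheta v_F)(h,\theta)}_{L^2_u}^2\dd\theta .
\]
The dependence of $\vartheta$ on $h$ is only through the annulus, since in the variables $(a,b)$ it is the fixed function $\vartheta(a,b)$. By Leibniz's rule, $\partial_a^k\partial_b^l(\vartheta v_F)$ is a finite sum of products of bounded derivatives of $\vartheta$ with $\partial_a^{k''}\partial_b^{l''}v_F$, $k''\le k$, $l''\le l$. Each of these is supported in $\R\times\Omega_0$. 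Proposition~\ref{Prop5.1} with $\Omega=\Omega_0$ bounds each integral by $Ch^{-1}\Lambda(h)^4\norm F_{H^{k+l+\kappa}}^2$. This gives the total bound $Ch^{3-2k-4l}\Lambda(h)^4\norm F^2_{H^{k+l+\kappa}}$, and taking square roots yields the first claim.

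For $h\ge1$, the same identity has the prefactor $h^{4-2k-4l}\le h^4$. The large-$h$ bound of Proposition~\ref{Prop5.1}, $C_{N}h^{-N}$ for every $N$, absorbs this prefactor after relabeling $N$. For the inhomogeneous weights, the lower-order monomials are handled identically.

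For $k=l=0$, use the refined case of Proposition~\ref{Prop5.1}. There is no derivative of $\vartheta$, and $|\vartheta|\le C$, so the integral is at most $Ch^{-1}\norm F_2^2$. Multiplying by $h^4$ and taking square roots gives $Ch^{3/2}\norm F_2$.

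I expect no serious obstacle, since all analytic content sits in Proposition~\ref{Prop5.1}. The one point to check carefully is that \eqref{eq2.7} is legitimate for $F\in C_c^\infty(J)$. This requires that multiplication by $z^kw^l$ corresponds to $i^{k+l}\partial_\zeta^k\partial_\lambda^l$ on the Fourier side, and that the chain rule $\partial_\zeta=h^{-1}\partial_a$, $\partial_\lambda=h^{-2}\partial_b$ accounts for the stated powers of $h$. Both follow from the Schwartz regularity of $K_F$ by Hulanicki's theorem and the $C^2$ dependence of the column on $(a,b)$ noted in Section~\ref{Sec4}.
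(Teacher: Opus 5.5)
Your proposal is correct and takes essentially the same route as the paper: apply \eqref{eq2.7}, expand $\partial_a^k\partial_b^l(\vartheta v_F)$ by Leibniz's rule, invoke Proposition~\ref{Prop5.1} on the annulus carrying $\supp\vartheta$, and absorb the lower moments from the inhomogeneous weights via $h\le1$ (respectively the rapid decay for $h\ge1$). The case $k=l=0$ likewise comes from the loss-free part of Proposition~\ref{Prop5.1}, exactly as in the paper.
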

\begin{proof}
Apply \eqref{eq2.7}, differentiate the cutoff $\vartheta$ by Leibniz's rule, and use Proposition~\ref{Prop5.1}. The square root of the scaling factor is
\[
  \bigl(h^{4-2k-4l}h^{-1}\bigr)^{1/2}=h^{3/2-k-2l}.
\]
The inhomogeneous weights also involve lower moments. For $h\le1$, their estimates are bounded by the displayed estimate, and the required Sobolev norms of $F$ are bounded by $C\norm F_{H^{k+l+\kappa}}$. For $h\ge1$ every additional power of $h$ is absorbed into the arbitrarily rapid decay.
\end{proof}

Interpolation of the integer moments and summation of the localized kernels over $h$ lead to the full kernel estimate.
\begin{proposition}\label{Prop5.3}
If $\beta,\gamma\ge0$ and $\beta+2\gamma<3/2$, then for every $\kappa>0$,
\begin{equation}\label{eq5.3}
  \norm{\br z^\beta\br w^\gamma K_F}_2
  \le C\norm F_{\Hs{\beta+\gamma+\kappa}}.
\end{equation}
\end{proposition}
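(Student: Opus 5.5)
The plan is to write $K_F=\sum_{j\in\mathbb Z}K_{F,2^j}$, which follows from the partition of unity \eqref{eq2.5}, and to estimate each localized piece by interpolating the integer moments of Proposition~\ref{Prop5.2}. Then we sum over dyadic $h=2^j$. The series converges in $L^2$: Proposition~\ref{Prop5.2} with $k=l=0$ gives $\norm{K_{F,h}}_2\le Ch^{3/2}\norm F_2$ for $h\le1$ and rapid decay for $h\ge1$. The Fourier supports of the pieces in $(\zeta,\lambda)$ have bounded overlap. So it suffices to bound $\sum_h\norm{\br z^\beta\br w^\gamma K_{F,h}}_2$.

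\emph{Interpolation for a fixed $h\le1$.} Since $\beta+2\gamma<3/2$ and $\beta,\gamma\ge0$, write $(\beta,\gamma)=\theta_1(2,0)+\theta_2(0,2)$ with $\theta_1=\beta/2$, $\theta_2=\gamma/2$, and $\theta_0=1-\theta_1-\theta_2\ge0$. The endpoints are the moment $(0,0)$, the pure moment $\br z^2$ and the pure moment $\br w^2$. By H\"older's inequality pointwise,
\[
\br z^\beta\br w^\gamma\le \br z^{2\theta_1}\br w^{2\theta_2}\cdot1^{\theta_0},
\]
so
\[
\norm{\br z^\beta\br w^\gamma K}_2\le\norm K_2^{\theta_0}\norm{\br z^2K}_2^{\theta_1}\norm{\br w^2K}_2^{\theta_2}.
\]
This is the three-term H\"older inequality with exponents $1/\theta_i$ applied to $|K|^2$.

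The weights are handled directly by H\"older, but the Sobolev norms of $F$ are not. A product $\norm F_{H^{\kappa}}^{\theta_0}\norm F_{H^{2+\kappa}}^{\theta_1+\theta_2}$ would give order $2(\theta_1+\theta_2)+\kappa=\beta+\gamma+\kappa$ only as a product of norms, not as a bound by $\norm F_{H^{\beta+\gamma+\kappa}}$. To get the latter, use complex (Stein) interpolation of the linear map $F\mapsto K_{F,h}$. Consider the analytic family of weights $\br z^{2z_1}\br w^{2z_2}$ paired with the analytic family of Bessel potentials $(1-\partial^2)^{-(z_1+z_2)}$ applied to $F$. Localize $F$ to $J$ with a fixed cutoff so that supports stay in a fixed compact interval, and enlarge $J$ slightly if needed. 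On the vertices $(z_1,z_2)\in\{(0,0),(1,0),(0,1)\}$ (real parts), Proposition~\ref{Prop5.2} gives bounds with constants
\[
Ch^{3/2}\Lambda(h)^2,\qquad Ch^{3/2-2}\Lambda(h)^2,\qquad Ch^{3/2-4}\Lambda(h)^2,
\]
with Sobolev orders $\kappa$, $2+\kappa$ and $2+\kappa$. Imaginary powers of $\br z,\br w$ are unitary, and imaginary Bessel powers are bounded on $H^s$ with polynomial growth. Interpolating gives, for $h\le1$,
\[
\norm{\br z^\beta\br w^\gamma K_{F,h}}_2\le C h^{3/2-\beta-2\gamma}\Lambda(h)^2\norm F_{H^{\beta+\gamma+\kappa}}.
\]
For $h\ge1$ the same interpolation gives $C_Nh^{-N}\norm F_{H^{\beta+\gamma+\kappa}}$.

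\emph{Summation.} Since $3/2-\beta-2\gamma>0$, the series $\sum_{j\le0}2^{j(3/2-\beta-2\gamma)}(1+|j|)^2$ converges, and $\sum_{j>0}2^{-jN}$ converges as well. The triangle inequality then yields \eqref{eq5.3} for $F\in C_c^\infty(J)$. The constant depends on the margin $3/2-\beta-2\gamma$ and on $\kappa$.

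\emph{Main obstacle.} The main obstacle is the justification of the complex interpolation step. Two points need care. First, the dependence of Proposition~\ref{Prop5.2} on $F$ must be linear and uniform for $F$ supported in a fixed compact interval, while $(1-\partial^2)^{-z}$ destroys compact support. I would handle this by composing with a fixed cutoff $\tilde\chi\in C_c^\infty$ equal to one near $J$, applying Proposition~\ref{Prop5.2} on the support of $\tilde\chi$, and using that multiplication by $\tilde\chi$ is bounded on every $H^s$. Second, the analytic family must have admissible growth in the imaginary direction. This follows because the bounds are polynomial in $|\Im z|$.

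If this route proves delicate, a fallback is real interpolation. Apply the K-method to the couple $(H^\kappa,H^{2+\kappa})$ restricted to $J$, and to the weighted $L^2$ spaces, where weighted $L^2$ spaces interpolate as expected (Stein--Weiss). Then use $B^{s}_{2,2}=H^s$ together with the $\kappa$ slack.
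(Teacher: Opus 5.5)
Your proposal is correct and follows the paper's proof. Both arguments interpolate the cases $(k,l)=(0,0),(2,0),(0,2)$ of Proposition~5.2 for the cut-off linear map $F\mapsto K_{\psi F,h}$ with coefficients $1-(\beta+\gamma)/2$, $\beta/2$, $\gamma/2$, which gives $h^{3/2-\beta-2\gamma}\Lambda(h)^2\norm F_{H^{\beta+\gamma+\kappa}}$, and then sum dyadically using $3/2-\beta-2\gamma>0$ and the rapid decay for $h\ge1$. The only cosmetic difference is that the paper carries out the three-vertex interpolation as two successive two-point complex interpolations (first between the two second-order estimates, then with the zeroth-order one) rather than through a two-parameter analytic family.
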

\begin{proof}
To interpolate on the full Sobolev scale, apply the estimates to the linear operator $F\mapsto K_{\psi F,h}$, where $\psi$ is a smooth cutoff equal to one on $J$. The three cases $(k,l)=(0,0),(2,0),(0,2)$ of Proposition~\ref{Prop5.2} have Sobolev orders $\kappa,2+\kappa,2+\kappa$. For $\beta,\gamma\ge0$ and $\beta+\gamma\le2$, the interpolation coefficients are
\[
  1-\frac{\beta+\gamma}{2},\qquad\frac\beta2,\qquad\frac\gamma2.
\]
If $\beta+\gamma>0$, first interpolate between the two estimates of order two with coefficients $\beta/(\beta+\gamma)$ and $\gamma/(\beta+\gamma)$, and then with the estimate of order zero. Including the immediate case $\beta=\gamma=0$, we obtain
\begin{equation}\label{eq5.4}
  \norm{\br z^\beta\br w^\gamma K_{F,h}}_2
  \le C h^{3/2-\beta-2\gamma}\Lambda(h)^2
           \norm F_{\Hs{\beta+\gamma+\kappa}},\qquad h\le1.
\end{equation}
Complex interpolation with these coefficients combines the weights geometrically and the Sobolev orders and powers of $h$ affinely. The large-$h$ estimates interpolate in the same way and retain arbitrarily rapid decay.

The condition $\beta+2\gamma<3/2$ implies $\beta+\gamma<2$, so the interpolation applies. The sum of \eqref{eq5.4} over dyadic $h\le1$ converges because $3/2-\beta-2\gamma>0$. The sum for $h\ge1$ converges by the rapid decay estimate. By \eqref{eq2.5} and Euclidean Plancherel, the unweighted $L^2$ limit of the dyadic sum is $K_F$. The omitted set $(\zeta,\lambda)=(0,0)$ is Lebesgue null. The weighted limit has the same distributional limit, proving \eqref{eq5.3}.
\end{proof}

We use the Plancherel formula to identify kernels obtained by approximation and to estimate each piece of the wave representation when introducing the weight in $\rho$.
\begin{lemma}\label{Lem5.4}
There is $c_{\calL}>0$ such that
\begin{equation}\label{eq5.5}
  \norm{K_F}_2^2=c_{\calL}\int_0^\infty |F(E)|^2E^{5/2}\dd E.
\end{equation}
The identity holds for $F\in L^2((0,\infty),E^{5/2}\dd E)$, with $K_F$ defined by approximation in $L^2(G)$. Moreover, $\ind_{\{0\}}(\calL)=0$, and $\ind_B(\calL)=0$ for every Lebesgue null Borel subset $B$ of $(0,\infty)$.
\end{lemma}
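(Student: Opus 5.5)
The plan is to derive \eqref{eq5.5} from the Fourier reduction \eqref{eq2.4} and homogeneity, and then obtain the two vanishing statements from it. Take first $F\in C_c^\infty((0,\infty))$. By \eqref{eq2.4},
\[
\norm{K_F}_2^2=(2\pi)^{-3}\int_{\R^2}\norm{F(P_{\nu,\lambda})}_{\mathrm{HS}}^2\dd\nu\dd\lambda .
\]
Rescaling $u\mapsto|\lambda|^{-1/3}u$ shows that $P_{\nu,\lambda}$ is unitarily equivalent to $|\lambda|^{2/3}$ times the quartic oscillator with parameter $\mu=\nu|\lambda|^{-4/3}$ (up to the sign of $\lambda$, which reflection removes), whose eigenvalues are $E_m(\mu)$ up to a fixed normalization. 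Hence $\norm{F(P_{\nu,\lambda})}_{\mathrm{HS}}^2=\sum_m|F(|\lambda|^{2/3}E_m(\mu))|^2$. Changing variables $(\nu,\lambda)\mapsto(\mu,\lambda)$ gives $d\nu=|\lambda|^{4/3}d\mu$. Then, for fixed $m,\mu$, substituting $E=|\lambda|^{2/3}E_m(\mu)$ gives $|\lambda|^{4/3}d\lambda$ on each half-line equal to $\tfrac32E_m(\mu)^{-7/2}E^{5/2}\dd E$. Tonelli's theorem therefore yields \eqref{eq5.5} with
\[
c_\calL=C\sum_m\int_\R E_m(\mu)^{-7/2}\dd\mu ,
\]
which is finite by \cite[Theorem 1.3]{BBGL} since $7/2>2$, and positive since every term is positive. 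This reproduces \cite[Proposition 1.10]{BBGL}, which could alternatively be quoted directly.

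Next extend to $F\in L^2((0,\infty),E^{5/2}\dd E)$. Since $C_c^\infty((0,\infty))$ is dense there and $F\mapsto K_F$ is linear, \eqref{eq5.5} makes this map an isometry up to $c_\calL$, so it extends to the closure. One should check that the limit agrees with the distribution kernel of $F(\calL)$ whenever $F$ is also bounded. For this, test $F(\calL)f$ against $g\in C_c^\infty(G)$, approximate $F$ boundedly a.e. by smooth $F_n$, and use the spectral theorem together with the $L^2$ convergence of $f*K_{F_n}$ for $f\in C_c^\infty$.

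For the vanishing statements, let $B\subset(0,\infty)$ be Lebesgue null and $B_R=B\cap[R^{-1},R]$. Take $F=\ind_{B_R}$; this is bounded, lies in the weighted $L^2$ space, and has zero norm there, so $K_F=0$. Hence $\ind_{B_R}(\calL)f=f*K_F=0$ for $f\in C_c^\infty$, and letting $R\to\infty$ gives $\ind_B(\calL)=0$. For $\{0\}$, suppose $\calL f=0$ with $f\in L^2$. Then $Xf=Yf=0$ by the form identity, so $f$ is invariant under right translation by the group generated by the horizontal directions, which is all of $G$. Thus $f$ is constant, hence zero.

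The main obstacle is the bookkeeping in the scaling step: getting the exact homogeneity exponents ($2/3$ and $4/3$) right and confirming that the summability exponent lands at $7/2>2$. The justification that the $L^2$ extension coincides with the spectral-calculus kernel also needs a careful approximation argument.
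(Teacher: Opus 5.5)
Your derivation of \eqref{eq5.5} for $F\in C_c^\infty((0,\infty))$ from \eqref{eq2.4} is correct. The exponents $2/3$ and $4/3$ and the factor $E_m(\mu)^{-7/2}$ are right, and \cite[Theorem 1.3]{BBGL} gives finiteness. The density extension and the argument for $\ind_{\{0\}}(\calL)=0$ are also fine.

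\textbf{The gap.} It lies in the step the null-set statement rests on: identifying, for bounded Borel $F$, the $L^2$-limit kernel $K_F$ with the convolution kernel of the spectral operator $F(\calL)$. You propose to approximate $F$ boundedly a.e.\ by smooth $F_n$ and invoke the spectral theorem. Dominated convergence in the spectral theorem gives $F_n(\calL)f\to F(\calL)f$ only if $F_n\to F$ almost everywhere for the spectral measure $\mu_f(B)=\ip{\ind_B(\calL)f}{f}$. Lebesgue-a.e.\ convergence implies this only if $\mu_f$ is absolutely continuous, which is exactly what you are trying to prove. In the case you need, the argument is vacuous: for $F=\ind_{B_R}$ with $B$ null, $F_n\equiv0$ is a bounded Lebesgue-a.e.\ approximation. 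So your identification would yield $\ind_{B_R}(\calL)=0$ without using \eqref{eq5.5} at all, which shows the step is circular.

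\textbf{A repair within your framework.} Fix $f\in C_c^\infty(G)$. Approximate $F$ by $F_n\in C_c^\infty((R^{-1}/2,2R))$ in $L^2$ of the finite Radon measure $\mu_f+\ind_{[R^{-1}/2,2R]}(E)\,E^{5/2}\dd E$; such functions are dense there. Then $F_n(\calL)f\to F(\calL)f$, and by \eqref{eq5.5} also $K_{F_n}\to K_F$ in $L^2(G)$. Young's inequality gives $f*K_{F_n}\to f*K_F$, hence $F(\calL)f=f*K_F$. Since $K_{\ind_{B_R}}=0$, this yields $\ind_{B_R}(\calL)=0$, and the rest of your argument goes through. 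With this change your route is a valid, more elementary alternative.

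\textbf{What the paper does instead.} It avoids the issue by using the faithful canonical trace $\operatorname{tr}_G$ on the group von Neumann algebra. Homogeneity gives $N(E)=\operatorname{tr}_G(\ind_{(0,E]}(\calL))=cE^{7/2}$. Hence $\ind_B(\calL)$ has zero trace for Lebesgue null $B$ and vanishes by faithfulness. The identity \eqref{eq5.5} for general $F$ then follows by integrating against $dN$.
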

\begin{proof}
Formula~\eqref{eq5.5} is established in \cite[Proposition 1.10]{BBGL}. It also follows from the Plancherel measure for homogeneous sub-Laplacians on stratified groups in \cite[Proposition 4.2]{MRT}. For the Engel group the density is $c\lambda^{7/2-1}=c\lambda^{5/2}$. To include the vanishing of spectral projections of null sets, let $\operatorname{tr}_G$ be the canonical faithful normal semifinite trace on the right group von Neumann algebra, normalized by
\[
  \operatorname{tr}_G(T_k^*T_k)=\norm k_2^2,\qquad T_kf=f*k.
\]
In the present coordinates it can equivalently be obtained from \eqref{eq2.4} by integrating the ordinary operator traces against $(2\pi)^{-3}\,d\nu\,d\lambda$. The Schwartz heat kernel and the semigroup identity give $0<\operatorname{tr}_G(e^{-\calL})<\infty$. The inequality $\ind_{(0,E]}(\calL)\le e^E e^{-\calL}$ shows that $N(E)=\operatorname{tr}_G(\ind_{(0,E]}(\calL))$ is finite. For $\mathcal D_rf(g)=r^{7/2}f(\delta_rg)$, one has $\mathcal D_r^{-1}\calL\mathcal D_r=r^2\calL$. The kernel of $\mathcal D_rT_k\mathcal D_r^{-1}$ is $r^7k(\delta_r g)$, whose squared $L^2$ norm is $r^7\norm k_2^2$. The trace identity, extended to positive operators by monotone approximation, implies $N(r^2E)=r^7N(E)$, so $N(E)=cE^{7/2}$. The spectral projection at zero vanishes: if $\calL u=0$ in $L^2$, the Dirichlet form implies $Xu=Yu=0$. Commutators yield $Zu=Wu=0$ distributionally, so $u$ is constant on the connected group. Since Haar measure is infinite, $u=0$. The positive heat trace ensures $c>0$.

Integration against $\dd N$ establishes \eqref{eq5.5}, first for simple functions and then by spectral approximation, extending $F\mapsto K_F$ continuously from smooth compactly supported multipliers to $L^2((0,\infty),E^{5/2}\dd E)$. The measure $\dd N$ has density $c_{\calL}E^{5/2}$. For every Lebesgue null Borel set $B$ in a bounded positive interval, the projection $\ind_B(\calL)$ has trace zero, and faithfulness implies that this projection vanishes. A countable union of such intervals covers $(0,\infty)$.
\end{proof}

We use the Plancherel identity to identify the unweighted $L^2$ limit, then extend the weighted estimates and convolution identities to Sobolev multipliers.
\begin{lemma}\label{Lem5.5}
Let $J\Subset(0,\infty)$, let $J'\Subset(0,\infty)$ be a compact interval containing a neighborhood of $J$, let $r\ge0$, and let $\omega\ge1$ be a measurable weight on $G$. Suppose that $\norm{\omega K_F}_2\le C_0\norm F_{H^r}$ for all $F\in C_c^\infty(J')$. Let $F\in H^r(\R)$ with $\supp F\subset J$, and let $K_F$ be defined by Lemma~\ref{Lem5.4}. Then $\norm{\omega K_F}_2\le C_0\norm F_{H^r}$, and $\omega K_{F_n}\to\omega K_F$ in $L^2(G)$ for suitable $F_n\in C_c^\infty(J')$ with $F_n\to F$ in $H^r$. If moreover $r>1/2$ and $\omega^{-1}\in L^2(G)$, then $K_{F_n}\to K_F$ in $L^1(G)$ and $F(\calL)f=f*K_F$ for $f\in L^2(G)$. In particular, identities between distributional derivatives of the kernels pass to the limit.
\end{lemma}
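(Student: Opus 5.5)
The plan is a standard density argument in which the Plancherel identity of Lemma~\ref{Lem5.4} identifies the limit. First I would build the approximants. Take $F\in H^r(\R)$ with $\supp F\subset J$. Convolve with a mollifier $\phi_\delta$ of support radius $\delta$ smaller than the distance from $J$ to the complement of $J'$, and set $F_n=F*\phi_{\delta_n}$ with $\delta_n\to0$. Then $F_n\in C_c^\infty(J')$, $F_n\to F$ in $H^r$, and $\norm{F_n}_{H^r}\le\norm F_{H^r}$, since mollification is a Fourier multiplier of modulus at most one. By hypothesis,
\[
\norm{\omega(K_{F_n}-K_{F_m})}_2\le C_0\norm{F_n-F_m}_{H^r}\to0,
\]
so $\omega K_{F_n}$ is Cauchy in $L^2(G)$ and converges to some $g$.

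Next I would identify $g$ with $\omega K_F$. Since $E^{5/2}$ is bounded on $J'$, Lemma~\ref{Lem5.4} gives
\[
\norm{K_{F_n}-K_F}_2\le C\norm{F_n-F}_{L^2}\to0,
\]
where $K_F$ is the $L^2$ kernel defined by approximation. Passing to a subsequence, $K_{F_n}\to K_F$ almost everywhere. Then $\omega K_{F_n}\to\omega K_F$ almost everywhere, so $g=\omega K_F$. Fatou's lemma, or simply passing to the limit in the norm, yields $\norm{\omega K_F}_2\le C_0\norm F_{H^r}$ and the stated convergence.

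Now assume in addition that $r>1/2$ and $\omega^{-1}\in L^2$. By Cauchy--Schwarz,
\[
\norm{K_{F_n}-K_F}_1\le\norm{\omega^{-1}}_2\,\norm{\omega(K_{F_n}-K_F)}_2\to0.
\]
For the operator identity I would fix $f\in L^2(G)$. By Sobolev embedding ($r>1/2$), $F_n\to F$ uniformly, so $F_n(\calL)\to F(\calL)$ in operator norm by the spectral theorem. Lemma~\ref{Lem5.4} makes the Borel representative irrelevant off null sets and at $0$, where $F$ vanishes anyway. On the other side, Young's inequality gives
\[
\norm{f*K_{F_n}-f*K_F}_2\le\norm f_2\norm{K_{F_n}-K_F}_1\to0.
\]
Since $F_n(\calL)f=f*K_{F_n}$ for smooth compactly supported $F_n$ by \eqref{eq2.2}, we conclude $F(\calL)f=f*K_F$.

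Finally, distributional identities (for instance $\mathcal A K_{F_n}=K_{G_n}$ for left-invariant operators, or relations between derivatives of kernels) pass to the limit because $L^1$ convergence implies convergence in $\mathcal D'(G)$, and derivatives are continuous there. The only mildly delicate point is the identification step: the weighted limit must coincide with the kernel defined abstractly through Plancherel, and the subsequence/a.e.\ argument above handles it. The rest is routine.
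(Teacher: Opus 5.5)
Your proposal is correct and follows the paper's proof essentially step for step. Both arguments mollify into $C_c^\infty(J')$, use the hypothesis to get a Cauchy sequence in weighted $L^2$, identify the limit through \eqref{eq5.5} and an almost everywhere convergent subsequence, upgrade to $L^1(G)$ by Cauchy--Schwarz, and obtain $F(\calL)f=f*K_F$ from uniform convergence of $F_n$ together with Young's inequality. Your remarks that $\norm{F_n}_{H^r}\le\norm F_{H^r}$, and that $L^1$ convergence gives convergence in $\mathcal D'(G)$, spell out details the paper leaves implicit.
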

\begin{proof}
Let $F_n$ be the convolution of $F$ with a standard mollifier supported in $[-1/n,1/n]$. For large $n$, $F_n\in C_c^\infty(J')$ and $F_n\to F$ in $H^r$, so $(\omega K_{F_n})$ is a Cauchy sequence in $L^2(G)$. By \eqref{eq5.5}, $K_{F_n}\to K_F$ in $L^2(G)$. A subsequence converges almost everywhere, identifying the weighted limit as $\omega K_F$ and establishing the first two assertions. If $\omega^{-1}\in L^2(G)$, the Cauchy-Schwarz inequality upgrades convergence to $L^1(G)$. If moreover $r>1/2$, then $F_n\to F$ uniformly, so $F_n(\calL)f\to F(\calL)f$ in $L^2$ by the spectral theorem, while $f*K_{F_n}\to f*K_F$ in $L^2$ by Young's inequality.
\end{proof}

The weight in $\rho$ is obtained by decomposing the wave representation into dyadic time intervals and applying \eqref{eq5.5} to each piece.
\begin{lemma}\label{Lem5.6}
For $N\ge0$ and $\kappa>0$,
\begin{equation}\label{eq5.6}
  \norm{\br\rho^N K_F}_2
  \le C_{N,\kappa,J}\norm F_{\Hs{N+\kappa}},
  \qquad F\in C_c^\infty(J).
\end{equation}
\end{lemma}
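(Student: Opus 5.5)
This is the standard finite propagation speed argument of Cowling--Sikora type, combined with the Plancherel identity \eqref{eq5.5}. The sentence before the lemma announces exactly this route.

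\emph{Wave representation and dyadic decomposition.} Write $F(E)=G(\sqrt E)$, where $G(\mu)=F(\mu^2)$ is extended evenly. Since $J\Subset(0,\infty)$, we have $\norm G_{H^s}\asymp\norm F_{H^s}$ for $G$ supported in $\pm\sqrt J$. Then
\[
F(\calL)=\frac1{2\pi}\int_\R\widehat G(t)\cos(t\sqrt\calL)\dd t .
\]
Choose a smooth partition of unity $1=\sum_{j\ge0}\phi_j(t)$, with $\phi_0$ supported in $|t|\le2$ and $\phi_j$ supported in $2^{j-1}\le|t|\le2^{j+1}$. Put $G_j=\check{(\phi_j\widehat G)}$ and $F_j(E)=G_j(\sqrt E)$. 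Finite propagation speed for $\cos(t\sqrt\calL)$ with respect to the control distance \cite{CM}, together with $\rho\asymp$ control distance, gives $\supp K_{F_j}\subset\{\rho\le C2^j\}$. Hence
\[
\norm{\br\rho^NK_{F_j}}_2\le C2^{jN}\norm{K_{F_j}}_2 .
\]

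\emph{Plancherel on each piece.} The function $G_j$ is no longer compactly supported in $(0,\infty)$. However, \eqref{eq5.5} holds for all $F\in L^2(E^{5/2}\dd E)$, so
\[
\norm{K_{F_j}}_2^2=c_\calL\int_0^\infty|G_j(\mu)|^2\mu^6\,2\mu\dd\mu\lesssim\norm{\br\mu^{7/2}G_j}_2^2 .
\]
This is the main technical step. The weight grows, so I split $G_j$ into its part on a fixed neighborhood $I'$ of $\sqrt J$ and its tail. On $I'$ the weight is bounded, and we get $\norm{\phi_j\widehat G}_2$. Away from $\sqrt J$, write $G_j=\check\phi_j*G$ and use the rapid decay of $\check\phi_j$ at distance $\gtrsim1$ from $\supp G$: since $\check\phi_j(\mu)=2^j\check\phi(2^j\mu)$ for $j\ge1$, the tail is bounded by $C_M2^{-jM}\br\mu^{-M}\norm G_2$, which absorbs both the polynomial weight and the factor $2^{jN}$. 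Thus
\[
\norm{\br\rho^NK_{F_j}}_2\lesssim2^{jN}\norm{\phi_j\widehat G}_2+C_M2^{-jM}\norm F_2 .
\]

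\emph{Summation.} We have $2^{jN}\norm{\phi_j\widehat G}_2\lesssim2^{-j\kappa}\norm{\br t^{N+\kappa}\widehat G}_2\lesssim2^{-j\kappa}\norm G_{H^{N+\kappa}}$. Summing over $j$ converges because of the factor $2^{-j\kappa}$, which is exactly the $\kappa$-loss, and gives \eqref{eq5.6}.

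The convergence $\sum_jK_{F_j}=K_F$ in $L^2$ follows from \eqref{eq5.5}. The weighted identity is then obtained by almost-everywhere limits, as in Lemma~\ref{Lem5.5}. The main points needing care are the justification of finite propagation speed for our self-adjoint realization and the tail estimate for the growing Plancherel density.
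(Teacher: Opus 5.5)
Your proposal is correct and follows the paper's route: the cosine representation of the even multiplier $F(r^2)$, a dyadic decomposition in $t$, finite propagation speed from \cite{CM} to localize each piece to $\rho\le C2^j$, the Plancherel identity \eqref{eq5.5} on each piece, and summation using the factor $2^{-j\kappa}$. The only difference is how you absorb the growing Plancherel density. The paper writes $\norm{r^3m_j}_{L^2(\R)}=C\norm{\partial_t^3(\varphi_j\widehat m)}_2$ and applies Leibniz with $|\partial_t^a\varphi_j|\le C_a2^{-ja}$, whereas your near/far splitting, using the decay of $\check\phi_j$ away from $\supp G$, works equally well. One small slip: $E^{5/2}\dd E=2\mu^6\dd\mu$, so the weight is $\mu^3$ rather than $\br\mu^{7/2}$; this is harmless, since your bound dominates it.
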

\begin{proof}
Put $m(r)=F(r^2)$ and extend evenly to $\R$. Choose an even dyadic partition $\sum_{j\ge0}\varphi_j(t)=1$, with $\varphi_0$ supported in a bounded interval and, for $j\ge1$, $\varphi_j$ supported where $|t|\asymp2^j$ and satisfying $|\partial_t^a\varphi_j|\le C_a2^{-ja}$. Set $\widehat m_j=\varphi_j\widehat m$.

Set $D_Hf=(Xf,Yf)$ and consider the self-adjoint closure of the first-order block operator
\[
  \mathcal D_H=\begin{pmatrix}0&D_H^*\\D_H&0\end{pmatrix},
\]
initially defined on compactly supported smooth vectors. Its square has scalar block $D_H^*D_H=\calL$, and its control distance is the horizontal control distance. Compactness of closed control balls ensures completeness; essential self-adjointness follows from \cite[Corollary 6.2]{CM}. By \cite[Theorem 7.4 and Section 7.2]{CM}, $\cos(t\sqrt{\calL})$ propagates at speed one. By the cosine representation of the even multiplier, the convolution kernel of $m_j(\sqrt{\calL})$ is supported in $\rho\le C2^j$.

Changing variables $E=r^2$ in \eqref{eq5.5}, we obtain
\[
  \norm{K_{m_j(\sqrt{\cdot})}}_2
  \asymp\norm{r^3m_j(r)}_{L^2(\R)}
  =C\norm{\partial_t^3(\varphi_j\widehat m)}_2.
\]
For finite sums, the triangle inequality now implies
\begin{align*}
  \norm{\br\rho^N K_F}_2
  &\le C\sum_{j\ge0}2^{jN}\norm{\partial_t^3(\varphi_j\widehat m)}_2\\
  &\le C\sum_{\ell=0}^3\sum_{j\ge0}
        2^{j(N-3+\ell)}\norm{\ind_{\supp\varphi_j}\partial_t^\ell\widehat m}_2.
\end{align*}
Since $\ell\le3$, the exponent $N-3+\ell$ is at most $N$. Cauchy-Schwarz in $j$, using a summable factor $2^{-j\kappa}$ and the bounded overlap of the dyadic shells in $t$, bounds this by
\[
  C_\kappa\sum_{\ell=0}^3
  \norm{(1+|t|)^{N+\kappa}\partial_t^\ell\widehat m}_2
  \asymp C_\kappa\sum_{\ell=0}^3\norm{r^\ell m}_{\Hs{N+\kappa}}.
\]
The support of $m$ is contained in a compact subset of $\R\setminus\{0\}$. Multiplication by $r^\ell$ there and the smooth change of variables $E=r^2$ are bounded on these Sobolev spaces, establishing \eqref{eq5.6} and convergence of the finite sums in weighted $L^2$. Their spectral $L^2$ limit identifies the kernel as $K_F$.
\end{proof}

\begin{proof}[Proof of Theorem~\ref{Thm1.2}]
We combine the weights in $z,w$ from Proposition~\ref{Prop5.3} with the weight in $\rho$ from Lemma~\ref{Lem5.6}. For $\alpha=0$, Proposition~\ref{Prop5.3} applies. Otherwise choose $0<\mu<1$ sufficiently small that
\[
  \beta_0=\frac{\beta}{1-\mu},\qquad
  \gamma_0=\frac{\gamma}{1-\mu},\qquad
  \beta_0+2\gamma_0<\frac32.
\]
This is possible because $\beta+2\gamma<3/2$. Put $N=\alpha/\mu$. Interpolate the operator $F\mapsto K_{\psi F}$ between \eqref{eq5.3} with exponents $(\beta_0,\gamma_0)$ and \eqref{eq5.6} with exponent $N$. The target weight is
\[
  (\br z^{\beta_0}\br w^{\gamma_0})^{1-\mu}
        (\br\rho^N)^\mu
  =\br\rho^\alpha\br z^\beta\br w^\gamma.
\]
With the same arbitrarily small loss $\kappa$ in both estimates, the resulting Sobolev order is
\[
  (1-\mu)(\beta_0+\gamma_0+\kappa)
        +\mu(N+\kappa)=\alpha+\beta+\gamma+\kappa.
\]
Thus \eqref{eq1.3} holds for $F\in C_c^\infty(J)$; the same argument on a slightly larger interval $J'$, followed by Lemma~\ref{Lem5.5}, extends it to $F\in H^{\alpha+\beta+\gamma+\kappa}(\R)$ with $\supp F\subset J$.
\end{proof}

\begin{remark}\label{Rem5.7}
The proof of Theorem~\ref{Thm1.1} uses Theorem~\ref{Thm1.2} with $\beta,\gamma<1/2$. The condition $\beta+2\gamma<3/2$ ensures dyadic summability in Proposition~\ref{Prop5.3}; the question of its optimality is stated in Section~\ref{Sec1}.
\end{remark}

For a fixed smooth multiplier, the localized kernels admit an asymptotic expansion that determines their leading moments. Define
\begin{align*}
  \check\vartheta(z,w)&=(2\pi)^{-2}\int_{\R^2}
        e^{i(za+wb)}\vartheta(a,b)\dd a\dd b,
  \\
  k_F^0(x,y)&=(2\pi)^{-2}\int_{\R^2}
        e^{i(x\xi+y\eta)}F(\xi^2+\eta^2)\dd\xi\dd\eta.
\end{align*}
Here $k_F^0$ is the kernel of $F(-\Delta)$ on the abelian quotient $\R^2$, and the same cutoff formula defines $K_{F,h}$ for every $h>0$. The notation $O_{\Sch}(h^2)$ means a remainder bounded by $C h^2$ in each Schwartz seminorm on $\R^4$, with $C$ depending on that seminorm and on $F$ and $\vartheta$.

\begin{proposition}\label{Prop5.8}
For fixed $F\in C_c^\infty(J)$, as $h\to0^+$,
\begin{equation}\label{eq5.7}
  h^{-3}K_{F,h}(x,y,z/h,w/h^2)
  =k_F^0(x,y)\left(\check\vartheta(z,w)
        -\frac{hxy}{2}\partial_z\check\vartheta(z,w)\right)
        +O_{\Sch}(h^2).
\end{equation}
For every $k,l\in\N_0$,
\begin{equation}\label{eq5.8}
  h^{2k+4l-3}\norm{z^kw^lK_{F,h}}_2^2
  =\frac{\norm{\partial_a^k\partial_b^l\vartheta}_2^2}{16\pi^3}
        \norm F_2^2+O_{F,\vartheta,k,l}(h^2).
\end{equation}
\end{proposition}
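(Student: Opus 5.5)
The plan is to work directly with the partial Fourier transform of $K_F$, rather than with the operators $P_{h,\theta}$. By \eqref{eq2.3}, $G(x,\eta,\zeta,\lambda)=\FA K_F(x,\eta,\zeta,\lambda)=[F(H_{\eta,\zeta,\lambda})\delta_0](x)$ is a Schwartz function on $\R^4$ (Hulanicki's theorem, as in the proof of Lemma~\ref{Lem2.1}). By definition of $K_{F,h}$ and the substitution $\zeta=ha$, $\lambda=h^2b$, I would write
\[
h^{-3}K_{F,h}(x,y,z/h,w/h^2)=(2\pi)^{-3}\int_{\R^3}e^{i(y\eta+za+wb)}\vartheta(a,b)\,G(x,\eta,ha,h^2b)\dd\eta\dd a\dd b .
\]
Taylor's formula in $(\zeta,\lambda)$ at the origin gives
\[
G(x,\eta,ha,h^2b)=G(x,\eta,0,0)+ha\,\partial_\zeta G(x,\eta,0,0)+R_h ,
\]
where $R_h$ is $h^2$ times integrals of second derivatives of $G$ (together with $h^2b\,\partial_\lambda G$). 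Since $\vartheta$ has compact support, the remainder is $h^2$ times a function Schwartz in $(x,\eta)$ and compactly supported in $(a,b)$, uniformly in $h\le1$. After the Fourier integral in $(\eta,a,b)$ it therefore contributes $O_{\Sch}(h^2)$. For the seminorms involving $(z,w)$-derivatives and $(z,w)$-moments one uses the smoothness of $\vartheta$; for the moments in $y$ one uses $\eta$-derivatives of $G$.

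Next I identify the two leading coefficients. At $\zeta=\lambda=0$ we have $H=-\partial_x^2+\eta^2$. Hence $G(x,\eta,0,0)$ is the partial Fourier transform in $y$ of $k_F^0$, and this term produces $k_F^0(x,y)\check\vartheta(z,w)$.

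For the first-order coefficient, $\partial_\zeta H$ at $\zeta=\lambda=0$ is multiplication by $2\eta x$, and $H_0=-\partial_x^2+\eta^2$ is a multiplier $\xi^2+\eta^2$ in the Fourier variable $\xi$ of $x$. The derivative $\partial_\zeta G=\partial_\zeta[F(H)]\delta_0$ can then be computed explicitly, for instance by the divided-difference formula \eqref{eq2.17} in the continuous Fourier basis, or by differentiating $F(H_0+\zeta V)$ via Duhamel with $x=i\partial_\xi$. My target is the identity
\[
\partial_\zeta G(x,\eta,0,0)=\tfrac{x}{2}\,\partial_\eta G(x,\eta,0,0).
\]
Under the inverse transform, $\partial_\eta$ becomes $-iy$ and the factor $a$ becomes $-i\partial_z$ on $\check\vartheta$. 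This gives exactly $-\tfrac{hxy}{2}k_F^0\,\partial_z\check\vartheta$ and proves \eqref{eq5.7}. I expect this identity to be the main obstacle. The symmetrization in $\xi$ that produces the factor $\tfrac12$ must be handled carefully. As a consistency check, I would verify it by a group-theoretic argument: to first order in $\zeta$ the Engel structure reduces to Heisenberg, where the kernel is known explicitly.

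For \eqref{eq5.8}, the change of variables gives
\[
h^{2k+4l-3}\norm{z^kw^lK_{F,h}}_2^2=\norm{z^kw^l\,\Phi_h}_2^2,\qquad \Phi_h(x,y,z,w)=h^{-3}K_{F,h}(x,y,z/h,w/h^2).
\]
The $O_{\Sch}(h^2)$ remainder controls weighted norms, so it suffices to expand the square of the main term. The cross term is $h$ times an integral containing $xy\,|k_F^0(x,y)|^2$. Since $k_F^0$ is radial, this is odd in $x$, so the cross term vanishes and the error is $O(h^2)$. The leading term factors as
\[
\norm{k_F^0}_2^2\,\norm{z^kw^l\check\vartheta}_2^2 .
\]
By Plancherel on $\R^2$,
\[
\norm{k_F^0}_2^2=(2\pi)^{-2}\int_{\R^2}|F(|\xi|^2)|^2\dd\xi=\frac{\norm F_2^2}{4\pi},\qquad
\norm{z^kw^l\check\vartheta}_2^2=(2\pi)^{-2}\norm{\partial_a^k\partial_b^l\vartheta}_2^2 .
\]
Their product is the constant $\norm{\partial_a^k\partial_b^l\vartheta}_2^2\,\norm F_2^2/(16\pi^3)$ stated in \eqref{eq5.8}.
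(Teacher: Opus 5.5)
Your proposal is correct. Its overall structure matches the paper's: a Taylor expansion of the Schwartz partial Fourier transform at $(\zeta,\lambda)=(0,0)$ against the compactly supported cutoff $\vartheta$; the zeroth-order term read off as $k_F^0\check\vartheta$; and the moment asymptotics from the rescaling identity, the vanishing of $\int xy|k_F^0|^2$ by radiality, and Plancherel. That you also transform in $y$ makes no difference.

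The genuine difference is how the first-order coefficient is identified. The paper uses symmetry. Reality and self-adjointness give $K_F(g^{-1})=K_F(g)$, and the automorphism $(x,y,z,w)\mapsto(-x,-y,z,-w)$ preserves $\calL$. Together they give $K_F(x,y,z,w)=K_F(x,y,xy-z,w-xz+x^2y/2)$. A measure-preserving change of variables then yields $2\int zK_F\,dz\,dw=xy\,k_F^0$, that is, $\partial_\zeta A_F(x,y,0,0)=-\tfrac{i}{2}xy\,k_F^0$.

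Your target $\partial_\zeta G=\tfrac{x}{2}\partial_\eta G$ at $\zeta=\lambda=0$ is exactly the $y$-Fourier transform of this identity. Your perturbative route reaches it in two lines. In the Fourier variable $\xi$ of $x$, $H_0$ is multiplication by $E=\xi^2+\eta^2$, and $\partial_\zeta H=2\eta x=2i\eta\partial_\xi$. The first-order formula applied to $\widehat{\delta_0}=1$ therefore gives $2i\eta\cdot 2\xi\cdot\partial_2F^{[1]}(E,E)=2i\eta\xi F''(E)$. On the other side, $\tfrac{i}{2}\partial_\xi\bigl(2\eta F'(E)\bigr)=2i\eta\xi F''(E)$ as well. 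The factor $\tfrac12$ you anticipated is $\partial_2F^{[1]}(E,E)=F''(E)/2$. To make this rigorous, write $F(H)=(2\pi)^{-1}\int\widehat F(t)e^{itH}\,dt$ and apply Duhamel, tested against Schwartz functions; this is legitimate because $G$ is already known to be smooth in $\zeta$.

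As for what each route buys: the paper's argument avoids perturbation theory for an operator with continuous spectrum acting on $\delta_0$. It also explains the $\tfrac12$ structurally, as the midpoint of the involution $z\mapsto xy-z$. Yours needs no group symmetries and produces an explicit formula for the first-order coefficient.

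Your Heisenberg consistency check is sound, and in fact exact rather than first-order. At $\lambda=0$ the reduced operator is exactly that of the Heisenberg quotient by the center, whose kernel carries the same symmetry.
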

\begin{proof}
By the Schwartz functional calculus, the partial Fourier transform
\[
  A_F(x,y,\zeta,\lambda)=\int_{\R^2}
        e^{-i(z\zeta+w\lambda)}K_F(x,y,z,w)\dd z\dd w
\]
is a Schwartz function. Formula~\eqref{eq2.3} at $\zeta=\lambda=0$, followed by inversion in $\eta$, shows that $A_F(x,y,0,0)=k_F^0(x,y)$.

For real $F$, the real coefficients of $\calL$ imply that $K_F$ is real, and self-adjointness implies $K_F(g^{-1})=K_F(g)$. Decomposing $F$ into its real and imaginary parts proves this identity for complex $F$ as well; see also \cite{BBGL}. The measure-preserving automorphism
\[
  (x,y,z,w)\longmapsto(-x,-y,z,-w)
\]
preserves $\calL$ and its convolution kernels. Since
\[
  (x,y,z,w)^{-1}=(-x,-y,xy-z,-w+xz-x^2y/2),
\]
the two symmetries give
\[
  K_F(x,y,z,w)=K_F(x,y,xy-z,w-xz+x^2y/2).
\]
For fixed $x,y$, changing variables in the absolutely convergent first moment yields
\[
  2\int_{\R^2}zK_F(x,y,z,w)\dd z\dd w=xy\,k_F^0(x,y),
  \qquad
  \partial_\zeta A_F(x,y,0,0)=-\frac{i xy}{2}k_F^0(x,y).
\]

By the definition of the localized kernel,
\[
  h^{-3}K_{F,h}(x,y,z/h,w/h^2)
  =\FA^{-1}_{a,b}\bigl[\vartheta(a,b)A_F(x,y,ha,h^2b)\bigr](z,w).
\]
Because $\vartheta$ has compact support, its product with $A_F(x,y,ha,h^2b)$ is a smooth function of $h$ near zero with values in the Schwartz space. Taylor's formula reads
\[
  \vartheta(a,b)A_F(x,y,ha,h^2b)
  =\vartheta(a,b)k_F^0(x,y)
        -\frac{i hxy}{2}a\vartheta(a,b)k_F^0(x,y)
        +O_{\Sch}(h^2).
\]
Partial Fourier inversion and $\FA^{-1}_{a,b}(a\vartheta)=-i\partial_z\check\vartheta$ prove \eqref{eq5.7}.

Write the left side of \eqref{eq5.7} as $R_h$. A change of variables shows that
\[
  h^{2k+4l-3}\norm{z^kw^lK_{F,h}}_2^2
  =\norm{z^kw^lR_h}_2^2.
\]
The term linear in $h$ in the squared norm vanishes, since $k_F^0$ is radial and $\int_{\R^2}xy|k_F^0(x,y)|^2\dd x\dd y=0$. The Schwartz remainder is $O(h^2)$ in every weighted $L^2$ norm. Finally, Euclidean Plancherel identifies the leading factors:
\[
  \norm{k_F^0}_2^2=\frac{1}{4\pi}\norm F_2^2,
  \qquad
  \norm{z^kw^l\check\vartheta}_2^2
        =\frac{1}{4\pi^2}\norm{\partial_a^k\partial_b^l\vartheta}_2^2,
\]
which proves \eqref{eq5.8}.
\end{proof}

The cutoff $\vartheta$ is nonzero by \eqref{eq2.5}. For every $k,l\in\N_0$, compact support and successive integration in $a$ and $b$ show that $\partial_a^k\partial_b^l\vartheta$ is also nonzero. Thus, for nonzero $F$, the leading constant in \eqref{eq5.8} is positive and the localized moment has order $h^{3/2-k-2l}$. The powers of $h$ in Proposition~\ref{Prop5.2} are attained by every fixed nonzero smooth multiplier, up to the factor $\Lambda(h)^2$ in that upper bound.

\section{The multiplier theorem and sharpness}\label{Sec6}
Weighted Cauchy-Schwarz converts Theorem~\ref{Thm1.2} into $L^1$ kernel estimates when the reciprocal weight lies in $L^2$. Let
\[
  \omega=\br\rho^\alpha\br z^\beta\br w^\gamma,
  \qquad 0\le\beta,\gamma<1/2.
\]
On the annulus $R\le\rho<2R$, $R\ge1$, use $g=\delta_Rg'$. Since
\[
  \int_{|z'|\le C}(1+R^2|z'|)^{-2\beta}\dd z'\le C_\beta R^{-4\beta},
  \qquad
  \int_{|w'|\le C}(1+R^3|w'|)^{-2\gamma}\dd w'\le C_\gamma R^{-6\gamma},
\]
one obtains
\[
  \int_{R\le\rho<2R}\omega^{-2}\dd g
   \le CR^{7-2\alpha-4\beta-6\gamma}.
\]
Summation over dyadic annuli shows that, if $\alpha+2\beta+3\gamma>7/2$, then $\omega^{-1}\in L^2$ and
\begin{equation}\label{eq6.1}
  \norm{\ind_{\{\rho>R\}}\omega^{-1}}_2\le CR^{7/2-\alpha-2\beta-3\gamma}.
\end{equation}

\begin{corollary}\label{Cor6.1}
Let $s>2$ and $0\le\delta<s-2$. Then
\[
  \norm{\br\rho^\delta K_F}_1
  \le C_{s,\delta,J}\norm F_{H^s},
  \qquad F\in C_c^\infty(J).
\]
The estimate extends to $F\in H^s(\R)$ supported in $J$.
\end{corollary}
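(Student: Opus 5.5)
The plan is weighted Cauchy--Schwarz: write
\[
\norm{\br\rho^\delta K_F}_1\le\norm{\br\rho^{\delta}\omega^{-1}}_2\,\norm{\omega K_F}_2 ,
\qquad \omega=\br\rho^{\alpha}\br z^\beta\br w^\gamma .
\]
The first factor is finite by \eqref{eq6.1}. The second is bounded by Theorem~\ref{Thm1.2}.

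Since $\br\rho^\delta\omega^{-1}=\br\rho^{-(\alpha-\delta)}\br z^{-\beta}\br w^{-\gamma}$, the first factor is finite precisely when $(\alpha-\delta)+2\beta+3\gamma>7/2$ with $0\le\beta,\gamma<1/2$. Theorem~\ref{Thm1.2} then gives
\[
\norm{\omega K_F}_2\le C\norm F_{H^{\alpha+\beta+\gamma+\kappa}},
\]
provided $\beta+2\gamma<3/2$. With the diagonal choice $\beta=\gamma=1/2-\epsilon$, this constraint becomes $3/2-3\epsilon<3/2$, which holds. The integrability condition then reads $\alpha>\delta+1+5\epsilon$. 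Take $\alpha=\delta+1+6\epsilon$. The Sobolev order becomes
\[
\alpha+\beta+\gamma+\kappa=\delta+2+4\epsilon+\kappa .
\]
Because $\delta<s-2$, we can choose $\epsilon,\kappa>0$ small enough that this order is at most $s$. Monotonicity of Sobolev norms then yields the estimate for $F\in C_c^\infty(J)$, with constants depending only on $s,\delta,J$ through these choices.

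For $F\in H^s$ with $\supp F\subset J$, apply Lemma~\ref{Lem5.5} with the weight $\omega$ and $r=\alpha+\beta+\gamma+\kappa\le s$, on a slightly larger interval $J'$. The hypothesis $\omega^{-1}\in L^2$ holds by \eqref{eq6.1}, since $\alpha+2\beta+3\gamma>7/2$, and $r>1/2$. The lemma gives $\omega K_{F_n}\to\omega K_F$ in $L^2$. The same Cauchy--Schwarz bound, using $\br\rho^\delta\omega^{-1}\in L^2$, then gives $\br\rho^\delta K_{F_n}\to\br\rho^\delta K_F$ in $L^1$, so the estimate passes to the limit. There is no real obstacle. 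The only point needing care is the bookkeeping of exponents: $\beta,\gamma$ must stay strictly below $1/2$ so that the $z$ and $w$ integrals in \eqref{eq6.1} converge, and this costs only arbitrarily small losses.
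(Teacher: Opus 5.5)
Your proof is correct and follows the paper's argument: weighted Cauchy--Schwarz with $\omega=\br\rho^\alpha\br z^\beta\br w^\gamma$, the diagonal choice $\beta=\gamma=\tfrac12-\epsilon$ and $\alpha=1+\delta+6\epsilon$ so that \eqref{eq6.1} applies to $\br\rho^{\delta}\omega^{-1}$, Theorem~\ref{Thm1.2} for the $L^2$ factor, and Lemma~\ref{Lem5.5} for the extension to $H^s$ multipliers. The only differences are that the paper takes your $\epsilon$ and $\kappa$ equal, and that your ``precisely when'' is stronger than needed: only the sufficiency direction is used, and that is what \eqref{eq6.1} provides.
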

\begin{proof}
To keep $\beta,\gamma$ positive, choose $0<\kappa<1/4$ so that $2+\delta+5\kappa<s$, and set
\[
  \beta=\gamma=\frac12-\kappa,\qquad
  \alpha=1+\delta+6\kappa.
\]
These parameters satisfy $\beta+2\gamma<3/2$ and $\alpha+\beta+\gamma+\kappa=2+\delta+5\kappa<s$. Applying \eqref{eq6.1} with $\alpha-\delta$ in place of $\alpha$ shows that $\br\rho^\delta\omega^{-1}\in L^2$, since
\[
  \alpha-\delta+2\beta+3\gamma-\frac72=\kappa>0.
\]
Cauchy-Schwarz and Theorem~\ref{Thm1.2} prove the estimate; Lemma~\ref{Lem5.5}, applied with the weight $\omega$, extends it by the same inequality.
\end{proof}

For the tail and horizontal derivative bounds needed to control kernel differences, fix $s>2$ and set $\delta=(s-2)/2$. Corollary~\ref{Cor6.1} implies
\begin{equation}\label{eq6.2}
  \norm{K_F}_1\le C_{s,\delta}\norm F_{\Hs s},\qquad
  \int_{\rho>R}|K_F|\dd g\le C_{s,\delta}R^{-\delta}\norm F_{\Hs s},
  \quad R\ge1.
\end{equation}

For the horizontal derivatives, let $S=K_\psi$, where $\psi$ is a smooth spectral cutoff equal to one near $J$. Functional calculus implies $K_F=K_F*S=S*K_F$. With the convolution convention \eqref{eq2.2},
\[
  \begin{aligned}
   XK_F&=K_F*(XS),& YK_F&=K_F*(YS),\\
   \Xt K_F&=(\Xt S)*K_F,& \Yt K_F&=(\Yt S)*K_F.
   \end{aligned}
\]
All horizontal derivatives of $S$ are integrable. Young's inequality and \eqref{eq6.2} yield
\begin{equation}\label{eq6.3}
  \norm{XK_F}_1+\norm{YK_F}_1+\norm{\Xt K_F}_1+\norm{\Yt K_F}_1
  \le C_s\norm F_{\Hs s}.
\end{equation}
By Lemma~\ref{Lem5.5}, these identities remain valid distributionally for $F\in H^s(\R)$ with $\supp F\subset J$.

We apply these estimates to the dyadic pieces of a bounded Borel multiplier with finite local Sobolev norm. Choose $\psi_0\in C_c^\infty((1/2,2))$ with $\sum_{j\in\mathbb Z}\psi_0(2^{-j}E)=1$ for $E>0$. Set
\[
  F_j(E)=F(E)\psi_0(2^{-j}E),\qquad
  G_j(u)=F(2^ju)\psi_0(u),\qquad r_j=2^{-j/2}.
\]
It suffices to consider $\mathfrak{M}=\norm F_{\Hs{s},\sloc}<\infty$. One has
\begin{equation}\label{eq6.4}
  \sup_j\norm{G_j}_{\Hs s}\le C\mathfrak{M}.
\end{equation}
To verify this for the arbitrary nonzero cutoff $\chi$ in \eqref{eq1.1}, cover $\supp\psi_0$ by finitely many intervals on which dilates $\chi(a_\ell u)$ do not vanish. With a subordinate smooth partition, write
\[
  \psi_0(u)=\sum_\ell b_\ell(u)\chi(a_\ell u).
\]
The changes $v=a_\ell u$ and multiplication by $b_\ell$ are bounded on $H^s$, proving \eqref{eq6.4}. Sobolev embedding also implies the essential bound $\norm F_{L^\infty((0,\infty))}\le C\mathfrak{M}$. Lemma~\ref{Lem5.4} makes this sufficient for arbitrary bounded Borel representatives.

Let $K_j=K_{F_j}$. By homogeneity,
\[
  K_j(g)=r_j^{-7}K_{G_j}(\delta_{1/r_j}g).
\]
By \eqref{eq6.2} and \eqref{eq6.3},
\begin{equation}\label{eq6.5}
  \int_{\rho>Rr_j}|K_j|\dd g\le CR^{-\delta}\mathfrak{M}\quad(R\ge1),
  \qquad
  \norm{\Xt K_j}_1+\norm{\Yt K_j}_1\le Cr_j^{-1}\mathfrak{M}.
\end{equation}

For any $g_0\in G$, a curve tangent to the right-invariant horizontal fields from the identity to $g_0^{-1}$ of length at most $C\rho(g_0)$ yields
\begin{equation}\label{eq6.6}
  \norm{k(g_0^{-1}\cdot)-k}_1
  \le C\rho(g_0)(\norm{\Xt k}_1+\norm{\Yt k}_1).
\end{equation}
Such a curve is obtained by inverting a curve tangent to the left-invariant horizontal fields. Apply the fundamental theorem of calculus to $k(\gamma(t)g)$. Right translation by $g$ carries each right-invariant field at $\gamma(t)$ to the same field at $\gamma(t)g$. Haar invariance then proves \eqref{eq6.6}. Approximation extends it to integrable weak derivatives.

The tail and translation estimates, applied on the two sides of the spatial scale $\rho(g_0)$, control kernel differences uniformly over finite spectral sums. For a finite set $\mathcal I\subset\mathbb Z$, put $K_{\mathcal I}=\sum_{j\in \mathcal I}K_j$.
\begin{proposition}\label{Prop6.2}
For a sufficiently large constant $C_1$,
\[
  \sup_{g_0\ne e}\int_{\rho(g)>C_1\rho(g_0)}
         |K_{\mathcal I}(g_0^{-1}g)-K_{\mathcal I}(g)|\dd g\le C\mathfrak{M},
\]
with $C$ independent of $\mathcal I$.
\end{proposition}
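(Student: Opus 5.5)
Fix $g_0\ne e$ and write $\rho_0=\rho(g_0)$. I split the dyadic indices $j\in\mathcal I$ according to the relation between the kernel scale $r_j=2^{-j/2}$ and $\rho_0$. The \emph{large-scale} pieces are those with $r_j\ge\rho_0$. The \emph{small-scale} pieces are those with $r_j<\rho_0$. The triangle inequality reduces the claim to bounding, uniformly in $g_0$, the sum over $j\in\mathcal I$ of
\[
I_j=\int_{\rho(g)>C_1\rho_0}|K_j(g_0^{-1}g)-K_j(g)|\dd g .
\]
Dropping indices outside $\mathcal I$ only decreases the sum, so I bound the sum over all $j\in\mathbb Z$ by $C\mathfrak M$.

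\emph{Large-scale pieces, $r_j\ge\rho_0$.} I ignore the domain restriction and use the translation estimate \eqref{eq6.6} with $k=K_j$. Combined with the derivative bound in \eqref{eq6.5}, this gives
\[
I_j\le C\rho_0(\norm{\Xt K_j}_1+\norm{\Yt K_j}_1)\le C\rho_0r_j^{-1}\mathfrak M .
\]
Since $r_j=2^{-j/2}$, the ratios $\rho_0/r_j\le1$ form a geometric sequence. Hence $\sum_{r_j\ge\rho_0}\rho_0/r_j\le C$, uniformly in $\rho_0$.

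\emph{Small-scale pieces, $r_j<\rho_0$.} Here I bound the two terms separately by the tail estimate in \eqref{eq6.5}. For the second term, $\rho(g)>C_1\rho_0$ and I apply the tail estimate with $R=C_1\rho_0/r_j\ge1$. For the first term I substitute $g'=g_0^{-1}g$ and use Haar invariance. By the quasi-triangle inequality for $\rho$ and $\rho(g_0^{-1})\asymp\rho_0$, the conditions $\rho(g)>C_1\rho_0$ and $C_1$ large imply $\rho(g')\ge c\,C_1\rho_0-C\rho_0\ge\rho_0$. So that term is at most $\int_{\rho>\rho_0}|K_j|$, and the tail estimate applies with $R=\rho_0/r_j>1$. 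In total $I_j\le C(\rho_0/r_j)^{-\delta}\mathfrak M$. Summing the geometric series over $r_j<\rho_0$ gives $C_\delta\mathfrak M$, again uniformly in $\rho_0$.

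\emph{Main point of care.} Adding the two ranges gives the bound $C\mathfrak M$, independent of $g_0$ and $\mathcal I$. Two things need attention. First, $C_1$ must be chosen large relative to the quasi-triangle constant of $\rho$. Second, \eqref{eq6.5} must be applied with $G_j$ supported in the fixed interval $\supp\psi_0\subset(1/2,2)$, so that the constants of \eqref{eq6.2}--\eqref{eq6.3} are uniform in $j$ by \eqref{eq6.4}. Both are routine given the earlier results. The only genuine input is the positive decay exponent $\delta=(s-2)/2$ from Corollary~\ref{Cor6.1}, which makes the small-scale sum converge.
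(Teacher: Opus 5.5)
Your proposal is correct and follows the paper's proof essentially step for step. Your split at $r_j=\rho(g_0)$ is exactly the paper's choice of $j_0$. For the large-scale pieces you use \eqref{eq6.6} with the derivative bound in \eqref{eq6.5}, and for the small-scale pieces you bound the two terms separately via the quasi-triangle inequality, Haar invariance and the tail bound in \eqref{eq6.5}, exactly as the paper does; both resulting geometric sums are uniform in $g_0$ and $\mathcal I$.
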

\begin{proof}
Put $r=\rho(g_0)$ and choose $j_0$ so that $2^{j_0/2}r\le1<2^{(j_0+1)/2}r$. For $j\le j_0$, \eqref{eq6.6} and \eqref{eq6.5} bound $\norm{K_j(g_0^{-1}\cdot)-K_j}_1$ by $Cr2^{j/2}\mathfrak{M}$, whose sum is $O(\mathfrak{M})$. For $j>j_0$, take the absolute values of the two kernel terms separately. The quasi-triangle inequality, for sufficiently large $C_1$, implies
\[
  \rho(g)>C_1r\quad\Longrightarrow\quad\rho(g_0^{-1}g)>cC_1r.
\]
Haar invariance and \eqref{eq6.5} bound their integral by $C(2^{j/2}r)^{-\delta}\mathfrak{M}$, which also has sum $O(\mathfrak{M})$. Restricting either sum to $\mathcal I$ proves the proposition.
\end{proof}

\begin{proof}[Proof of Theorem~\ref{Thm1.1}]
Let $T_{\mathcal I}=\sum_{j\in \mathcal I}F_j(\calL)$, the operator of convolution with $K_{\mathcal I}$. Finite overlap of the spectral cutoffs and \eqref{eq6.4} imply $\norm{T_{\mathcal I}}_{2\to2}\le C\mathfrak{M}$. Let $f_B\in L^1(G)$ be supported in a ball $B$ with center $c_B$, and let $f_B$ have integral zero. By \eqref{eq2.2} and Haar invariance, for $x=c_Bv$,
\[
  T_{\mathcal I}f_B(x)=\int_G f_B(c_Bg_0)\bigl[K_{\mathcal I}(g_0^{-1}v)-K_{\mathcal I}(v)\bigr]\dd g_0.
\]
Proposition~\ref{Prop6.2} implies $\int_{(C_1B)^c}|T_{\mathcal I}f_B|\dd x\le C\mathfrak{M}\norm{f_B}_1$. Together with the uniform $L^2$ bound, the Calder\'on-Zygmund decomposition for the control distance and Haar measure \cite[Chapter III]{CW} yields
\begin{equation}\label{eq6.7}
  \norm{T_{\mathcal I}}_{L^1\to L^{1,\infty}}\le C_s\mathfrak{M},
\end{equation}
uniformly in finite $\mathcal I$.

Take $\mathcal I=\{-N,\ldots,N\}$. The spectral partition and the vanishing of the spectral projection at zero imply $T_{\mathcal I}f\to F(\calL)f$ in $L^2$ for every $f\in L^2$, by dominated convergence in the spectral theorem. For $f\in L^1\cap L^2$, take a subsequence converging almost everywhere and apply Fatou's lemma to the distribution functions to pass \eqref{eq6.7} to $F(\calL)$. Completion in measure extends this operator to $L^1$, consistently with the $L^2$ realization. Interpolation establishes the $L^p$ estimates for $1<p\le2$, and the same result applied to $\overline F$, followed by duality, covers $2<p<\infty$, completing the upper bound in Theorem~\ref{Thm1.1}.

To identify the sharp threshold, we apply \cite[Theorem 1.1(i)]{MMN}. The theorem applies to sub-Laplacians defined by a smooth positive measure on an $n$-dimensional manifold and a smooth function on the cotangent bundle that is a positive semidefinite quadratic form on each fiber, with any self-adjoint extension, provided the form is bracket generating at some point. For the Engel group, the quadratic form sums the squares of a covector's values on $X$ and $Y$. Together with Haar measure it defines $\calL$, and the fields $X,Y$ and their brackets span the tangent space at every point. Thus the hypotheses hold with $n=d=4$.

The theorem states that if $p\in[1,\infty]$ and $a\ge0$ are such that $\norm{m(\calL)}_{L^p\to L^p}\le C\norm m_{L^\infty_{a,\sloc}}$ for all Schwartz functions $m$, where $\norm m_{L^\infty_{a,\sloc}}=\sup_{t\ge0}\norm{\chi m(t\cdot)}_{L^\infty_a(\R)}$ and $L^\infty_a(\R)$ is the Bessel potential space of order $a$, then
\[
  a\ge d\left|\frac1p-\frac12\right|.
\]
The same theorem supplies a lower bound for the Mihlin-H\"ormander threshold defined in \cite{MMN}. This is the infimum of the $a\ge0$ such that, for every $p\in(1,\infty)$, $\norm{m(\calL)}_{L^p\to L^p}\le C\norm m_{L^2_{a,\sloc}}$ for all bounded continuous $m:[0,\infty)\to\C$, where $\norm m_{L^2_{a,\sloc}}=\sup_{t\ge0}\norm{\chi m(t\cdot)}_{H^a(\R)}$.

With the same cutoff, the norm in \eqref{eq1.1} is at most this norm because its supremum is taken over $r>0$. Different cutoffs give equivalent norms. At each order admitted in the definition of $\varsigma(\calL)$, the estimates hold for every bounded Borel function. They imply the estimates in the definition of the threshold in \cite{MMN} for every bounded continuous function. Since the threshold in \cite{MMN} is at least $d/2=2$, we obtain $\varsigma(\calL)\ge2$.

Combined with the upper bound, this establishes \eqref{eq1.2}.
\end{proof}

\section*{Acknowledgments}
The authors thank Professors Thomas Schick and Ingo Witt for their supervision and support during their doctoral studies at the University of G\"ottingen. The authors used GPT-6 (OpenAI) to assist with literature searches and English language editing. All mathematical arguments and proofs in the final manuscript were written and checked by the authors, who take full responsibility for its content.

\medskip
\noindent\textbf{Funding.} This work was supported by the National Natural Science Foundation of China (12301145, 12561020, 12261107) and the Yunnan Fundamental Research Projects (202401AU070123, 202601AT070048).

\medskip
\noindent\textbf{Data availability.} Data sharing is not applicable to this article as no new data were created or analyzed in this study.

\medskip
\noindent\textbf{Conflict of interest.} The authors declare that they have no conflict of interest.

\authoraddresses

\begin{thebibliography}{99}
\setlength{\itemsep}{0pt}

\bibitem{Agm}
S.~Agmon, \emph{Lectures on Exponential Decay of Solutions of Second-Order Elliptic Equations: Bounds on Eigenfunctions of N-Body Schr\"odinger Operators}, Mathematical Notes 29, Princeton University Press, Princeton, NJ, 1982.

\bibitem{BBGL}
H.~Bahouri, D.~Barilari, I.~Gallagher and M.~L\'eautaud, Spectral summability for the quartic oscillator with applications to the Engel group, \emph{J. Spectr. Theory} \textbf{13} (2023), no.~2, 623-706.

\bibitem{Ben}
L.~Benedetto, Quantum-classical correspondence and obstruction to dispersion on the Engel group, \emph{Amer. J. Math.}, to appear.

\bibitem{Cha}
M.~Chatzakou, A note on spectral multipliers on Engel and Cartan groups, \emph{Proc. Amer. Math. Soc.} \textbf{150} (2022), no.~5, 2259-2270.

\bibitem{Chr}
M.~Christ, $L^p$ bounds for spectral multipliers on nilpotent groups, \emph{Trans. Amer. Math. Soc.} \textbf{328} (1991), no.~1, 73-81.

\bibitem{CW}
R.~R.~Coifman and G.~Weiss, \emph{Analyse harmonique non-commutative sur certains espaces homog\`enes. \'Etude de certaines int\'egrales singuli\`eres}, Lecture Notes in Mathematics 242, Springer-Verlag, Berlin, 1971.

\bibitem{CM}
M.~G.~Cowling and A.~Martini, Sub-Finsler geometry and finite propagation speed, in \emph{Trends in Harmonic Analysis}, Springer INdAM Series 3, Springer, Milan, 2013, pp.~147-205.

\bibitem{DMrobust}
G.~M.~Dall'Ara and A.~Martini, A robust approach to sharp multiplier theorems for Grushin operators, \emph{Trans. Amer. Math. Soc.} \textbf{373} (2020), no.~11, 7533-7574.

\bibitem{DM}
G.~M.~Dall'Ara and A.~Martini, An optimal multiplier theorem for Grushin operators in the plane, I, \emph{Rev. Mat. Iberoam.} \textbf{39} (2023), no.~3, 897-974.

\bibitem{DMII}
G.~M.~Dall'Ara and A.~Martini, An optimal multiplier theorem for Grushin operators in the plane, II, \emph{J. Fourier Anal. Appl.} \textbf{28} (2022), no.~2, Paper No.~32, 29~pp.

\bibitem{DS}
M.~Dimassi and J.~Sj\"ostrand, \emph{Spectral Asymptotics in the Semi-Classical Limit}, London Mathematical Society Lecture Note Series 268, Cambridge University Press, Cambridge, 1999.

\bibitem{DOS}
X.~T.~Duong, E.~M.~Ouhabaz and A.~Sikora, Plancherel-type estimates and sharp spectral multipliers, \emph{J. Funct. Anal.} \textbf{196} (2002), no.~2, 443-485.

\bibitem{FS}
G.~B.~Folland and E.~M.~Stein, \emph{Hardy Spaces on Homogeneous Groups}, Mathematical Notes 28, Princeton University Press, Princeton, NJ, 1982.

\bibitem{Heb}
W.~Hebisch, Multiplier theorem on generalized Heisenberg groups, \emph{Colloq. Math.} \textbf{65} (1993), no.~2, 231-239.

\bibitem{HS}
B.~Helffer and J.~Sj\"ostrand, Multiple wells in the semi-classical limit I, \emph{Comm. Partial Differential Equations} \textbf{9} (1984), no.~4, 337-408.

\bibitem{Hor}
L.~H\"ormander, Estimates for translation invariant operators in $L^p$ spaces, \emph{Acta Math.} \textbf{104} (1960), 93-140.

\bibitem{Hul}
A.~Hulanicki, A functional calculus for Rockland operators on nilpotent Lie groups, \emph{Studia Math.} \textbf{78} (1984), no.~3, 253-266.

\bibitem{MarHR}
A.~Martini, Spectral multipliers on Heisenberg-Reiter and related groups, \emph{Ann. Mat. Pura Appl.} (4) \textbf{194} (2015), no.~4, 1135-1155.

\bibitem{MMfree}
A.~Martini and D.~M\"uller, $L^p$ spectral multipliers on the free group $N_{3,2}$, \emph{Studia Math.} \textbf{217} (2013), no.~1, 41-55.

\bibitem{MMGr}
A.~Martini and D.~M\"uller, A sharp multiplier theorem for Grushin operators in arbitrary dimensions, \emph{Rev. Mat. Iberoam.} \textbf{30} (2014), no.~4, 1265-1280.

\bibitem{MMnew}
A.~Martini and D.~M\"uller, Spectral multiplier theorems of Euclidean type on new classes of two-step stratified groups, \emph{Proc. Lond. Math. Soc.} (3) \textbf{109} (2014), no.~5, 1229-1263.

\bibitem{MMtop}
A.~Martini and D.~M\"uller, Spectral multipliers on 2-step groups: topological versus homogeneous dimension, \emph{Geom. Funct. Anal.} \textbf{26} (2016), no.~2, 680-702.

\bibitem{MMN}
A.~Martini, D.~M\"uller and S.~Nicolussi Golo, Spectral multipliers and wave equation for sub-Laplacians: lower regularity bounds of Euclidean type, \emph{J. Eur. Math. Soc.} \textbf{25} (2023), no.~3, 785-843.

\bibitem{MRT}
A.~Martini, F.~Ricci and L.~Tolomeo, Convolution kernels versus spectral multipliers for sub-Laplacians on groups of polynomial growth, \emph{J. Funct. Anal.} \textbf{277} (2019), no.~6, 1603-1638.

\bibitem{MSGr}
A.~Martini and A.~Sikora, Weighted Plancherel estimates and sharp spectral multipliers for the Grushin operators, \emph{Math. Res. Lett.} \textbf{19} (2012), no.~5, 1075-1088.

\bibitem{MauMed}
G.~Mauceri and S.~Meda, Vector-valued multipliers on stratified groups, \emph{Rev. Mat. Iberoam.} \textbf{6} (1990), 141-154.

\bibitem{MSwave}
D.~M\"uller and A.~Seeger, Sharp $L^p$ bounds for the wave equation on groups of Heisenberg type, \emph{Anal. PDE} \textbf{8} (2015), no.~5, 1051-1100.

\bibitem{MS}
D.~M\"uller and E.~M.~Stein, On spectral multipliers for Heisenberg and related groups, \emph{J. Math. Pures Appl.} (9) \textbf{73} (1994), no.~4, 413-440.

\bibitem{Nie}
L.~Niedorf, Spectral multipliers on M\'etivier groups, \emph{Studia Math.} \textbf{282} (2025), no.~2, 149-197.

\bibitem{SimFI}
B.~Simon, \emph{Functional Integration and Quantum Physics}, 2nd ed., AMS Chelsea Publishing, Providence, RI, 2005.

\bibitem{Sim}
B.~Simon, Semiclassical analysis of low lying eigenvalues. II. Tunnelling, \emph{Ann. of Math.} (2) \textbf{120} (1984), no.~1, 89-118.

\end{thebibliography}
\end{document}